\theoremstyle{plain}
\newtheorem{theorem}{Theorem}[section]
\newtheorem{lemma}[theorem]{Lemma}
\newtheorem{corollary}[theorem]{Corollary}
\theoremstyle{definition}
\newtheorem{definition}[theorem]{Definition}
\newtheorem{remark}[theorem]{Remark}
\theoremstyle{remark}
\mathchardef\emptyset="001F
\numberwithin{equation}{section}
\newcommand{\Lone}{{\mathcal L}^{1}}
\newcommand{\supp}{{\rm supp}}
\newcommand{\R}{{\mathbb R}}
\newcommand{\Rn}{{\R}^n}
\newcommand{\Om}{\Omega}
\newcommand{\Ga}{\Gamma}
\newcommand{\N}{\mathbb N}
\newcommand{\PP}{{\mathcal P}_1}
\newcommand{\WW}{{\mathcal W}_1}
\newcommand{\be}{\begin{equation}}
\newcommand{\ee}{\end{equation}}
\newcommand{\bes}{\begin{eqnarray}}
\newcommand{\ees}{\end{eqnarray}}
\begin{document}
\title{Mean-Field Optimal Control}
\author{
Massimo Fornasier
\footnote{Technische Universit\"at M\"unchen, Fakult\"at Mathematik, Boltzmannstrasse 3
 D-85748, Garching bei M\"unchen, Germany  ({\tt massimo.fornasier@ma.tum.de}). } \, and \,
Francesco Solombrino\footnote{Technische Universit\"at M\"unchen, Fakult\"at Mathematik, Boltzmannstrasse 3
D-85748, Garching bei M\"unchen, Germany  ({\tt francesco.solombrino@ma.tum.de}).}
}
\maketitle

\begin{abstract}
We introduce the concept of {\it mean-field optimal control} which is the rigorous limit process connecting finite
dimensional optimal control problems with ODE constraints modeling multi-agent interactions to an infinite dimensional optimal control problem 
with a constraint given by a PDE of Vlasov-type, governing the dynamics of the probability distribution of interacting agents.
While in the classical mean-field theory one studies the behavior of a large number of small individuals {\it freely interacting} with each other, by 
simplifying the effect of all the other individuals on any given individual by a single averaged effect, we address the situation where the individuals
are actually influenced also by an external {\it policy maker}, and we propagate its effect for the number $N$ of individuals going to infinity. 
On the one hand, from a modeling point of view, we take into account also that the policy maker is constrained to act according to optimal strategies promoting its most parsimonious interaction with 
the group of individuals. This will be realized by considering cost functionals including $L^1$-norm terms penalizing a broadly distributed control
of the group, while promoting its sparsity.  On the other hand, from the analysis point of view, and for the sake of generality, we consider broader classes of convex control penalizations. In order 
to develop this new concept of limit rigorously, we need to carefully combine the classical concept of mean-field limit,
connecting the finite dimensional system of ODE describing the dynamics of each individual of the group to the PDE describing the dynamics of the respective
probability distribution, with the well-known concept of $\Gamma$-convergence to show that optimal strategies for the finite dimensional problems
converge to optimal strategies of the infinite dimensional problem. 
\end{abstract}

\noindent {\bf Keywords:} Sparse optimal control, mean-field limit, $\Gamma$-limit, optimal control with ODE constraints, optimal control with PDE constraints.

\section{Introduction}

Recently there has been a strong development of literature in applied mathematics and physics describing collective behavior of 
multiagent systems \cite{CuckerDong11,cucker-mordecki,CucSma07,GC04, MR2000132,KeMinAuWan02,vicsek}, towards modeling phenomena in biology, such as cell aggregation and motion \cite{CDFSTB03,kese70,KocWhi98,be07},  animal motion \cite{BCCCCGLOPPVZ09,MR2507454,ChuDorMarBerCha07,crpito10,CouFra02,CKFL05,CucSma07,Niw94,PE99,ParVisGru02,Rom96,TonTu95,YEECBKMS09}, human \cite{crpito11,CucSmaZho04,MR2438215} and synthetic agent behavior and interactions, such as 
cooperative robots \cite{ChuHuaDorBer07,LeoFio01,PerGomElo09,SugSan97}. As it is very hard to be exhaustive in accounting all the developments of this very fast growing
field, we refer to \cite{CCH13,cafotove10,viza12}  for recent surveys.\\
Most of these models start from particle-like systems, borrowing a leaf from Newtonian physics, by including fundamental ``social interaction'' forces, such as attraction, repulsion,
self-drive, orientation and alignment etc. within classical systems of 2nd order equations, governing the evolution of the status, usually the spatial motion, of each agent. 
One fundamental goal of these studies is to clarify the relationship between the interplay of such simple binary forces, being the ``first principles'' of
social interaction, and the potential emergence  of a global behavior in the form of specific patterns, as the result of the re-iterated superposition in time and group-wise
of such forces. For patterns we do not necessarily mean steady states, as one considers in the study of crystalline structures, but rather structured evolutions, such
as the formation of flocks or swarms in animal motion. Due to their discrete nature and the direct description of the dynamics in terms of the single agent, such
particle models are also called Individual Based Models. While in some cases, for instance in flocking models \cite{CucSma07,cusm07}, it is possible to describe rather precisely
the mechanism of pattern formation, for most of the models the description of the asymptotic behavior of a very large system of particles can become an impossible task.
A classical way to approach the global description of the system is then to focus on its mean behavior, 
 as in the classical mean-field theory one studies the evolution of a large number of small individuals {\it freely interacting} with each other, by 
simplifying the effect of all the other individuals on any given individual by a single averaged effect. This results in considering the evolution of the particle density distribution in
the state variables, leading to so-called mean-field partial differential equations of Vlasov- or Boltzmann-type \cite{1151.82351}. We refer to \cite{CCH13} and the references therein 
for a recent survey  on some of the most relevant mathematical aspects on this approach to swarm models.
\\
On the one hand, in certain circumstances, the formation of a specific pattern is conditional to the initial datum, being positioned in a corresponding basin 
of attraction, which sometimes can be characterized, see \cite{ChuDorMarBerCha07} for an interesting example of modeling of possible multiple patterns. On the other hand, the choice of the initial condition outside such a basin of attraction does not give
in general any guarantee of stable pattern formation. Thus it is interesting to question whether an ``external player'' or ``policy maker'' can intervene
on the system towards pattern formation, also in those situations where this phenomenon is not the result of independent self-organization.
The intervention may be modeled as an additional control vector field subjected to certain bounds, representing the limitations (in terms of resources, strength etc.)
of the external policy maker. \\
In the recent work \cite{CFPT} the authors investigated, specifically for individual based flocking models of Cucker-Smale type \cite{CucSma07,cusm07}, how {\it sparse} controls 
can be applied in order to always ensure pattern formation, in this case the emergence of consensus. \\
For sparse control we mean that the policy maker intervenes the minimal amount of times on the minimal amount of individual agents. Surprisingly this control
strategy turns out not only to be economical in terms of interactions between the policy maker and the group of agents, but also in terms of enhancing the
rate of convergence to pattern formation.\\
While the work \cite{CFPT} clarified the basis of sparse stabilization and optimal control for individual based models, so far it has not been explored how such 
concepts could be rigorously connected, through a proper limit process for the number N of agents going to infinity, to continuum models, as in the classical
aforementioned mean-field theory of {\it uncontrolled} systems. \\

In this paper we want to pose the foundations of the discrete/finite dimensional - continuum/infinite dimensional limit for $N \to \infty$ of ODE constrained
control problems of the type:
\begin{equation}\label{mainmod}
\left \{
\begin{array}{ll}
\dot x_i = v_i, & \\
\dot v_i = ( H \star \mu_N)(x_i,v_i) + f(t,x_i,v_i), & i=1,\dots N, \quad t \in [0,T],
\end{array}
\right.
\end{equation}
where 
$$
\mu_N = \frac{1}{N} \sum_{j=1}^N \delta_{(x_i,v_i)},
$$
is the empirical atomic measure supported on the agents states $(x_i,v_i) \in \mathbb R^{2 d}$, controlled by the minimizer of the cost functional
\begin{equation}\label{mainmod2}
\mathcal E_\psi^N(f) := \int_{0}^T \int_{\mathbb R^{2d}} \left ( L(x,v,\mu_N) + \psi(f(t,x,v) \right ) d \mu_N(t,x,v) dt,
\end{equation}
where $\psi: \mathbb R^{d} \to [0,+\infty)$ is a nonnegative convex function, $f(t,x,v):\mathbb R \times \mathbb R^{d}\times  \mathbb R^{d}  \to \mathbb R^{d}$ is a Carath{\'e}odory function, being an absolutely continuous control vector
field with a certain sublinear growth in the variables $x$ and $v$, $H: \mathbb R^{d} \to \mathbb R^{d}$ is a sublinear and locally  Lipschitz continuous interaction kernel, and $L: \mathbb R^{2d} \times \mathcal P_1 (\mathbb R^{2d})\to \mathbb R_+$
is a continuous function, modeling the discrepancy of the actual states to the basin of attraction. 
In particular we need that for $\mu_j \to \mu$ narrowly in $\mathcal P_1 (\mathbb R^{2d})$, the set of
probability measures with bounded first moment, then $L(x,v,\mu_j) \to  L(x,v,\mu)$ uniformly with respect to $(x,v)$  on compact sets of $\mathbb R^{2d}$.
\\

A relevant choice for $\psi$, modeling a situation of particular interest, is $\psi(\cdot)=\gamma|\cdot|$, for $\gamma >0$. In this case the minimization of $\mathcal E_\psi^N$ simultaneously promotes a choice of an optimal control $f$, which instantaneously steers the system in the direction of the basin of attraction
as a consequence of the minimization of the term involving $L$, and the {\it sparsity} of $f$ by means of the $L^1$-norm penalization term
\begin{equation}\label{L1normcon}
\int_{\mathbb R^{2d}}| f(t,x,v) |  d \mu_N(x,v) = \frac{1}{N} \sum_{j=1}^N | f(t,x_j,v_j)|.
\end{equation}
With this we mean that $\supp(f(t,\cdot))$ is actually expected to be a ``small set''. The use of (scalar) $\ell^1$-norms to penalize controls dates back to the 60's with the models of linear fuel consumption \cite{crlo65}. 
More recent work in dynamical systems \cite{voma06} resumes again $\ell^1$-minimization emphasizing its sparsifying power. 
Also in optimal control with partial differential equation constraints it became rather popular to use $L^1$-minimization to enforce sparsity of controls \cite{caclku12,clku11,clku12,hestwa12,pive12,st09,wawa11}, for instance
in the modeling of optimal placing of actuators or sensors. Let us also mention the recent related work \cite{prtrzu13} on the optimal design problem for sparse control of wave equations, where a suitable convex relaxation (see \cite[Remark 1]{prtrzu13}) leads to the 
transformation of the problem based on an optimization over characteristic functions of small sets to a problem of minimization over terms of the type \eqref{L1normcon}.
\\

The general system \eqref{mainmod} includes, for instance, the sparsely controlled Cucker-Smale type of models of flocking \cite{CFPT}, obtained by choosing $H(x,v) = a(|x|) v$, where $a \in C^{1}([0,+\infty))$ is a {\it nonincreasing positive function},
and $L(x,v,\mu_N) = |v - ( \int_{\mathbb R^{2d}} w d\mu_N(y,w) ) |^2 = | v - \frac{1}{N} \sum_{j=1}^N v_j|^2$, for which one gets
\begin{equation}\label{CSmod}
\left \{
\begin{array}{ll}
\dot x_i = v_i, & \\
\dot v_i =  \frac{1}{N} \sum_{j=1}^N a(|x_j-x_i|) ( v_j - v_i) + f(t,x_i, v_i), & i=1,\dots N, \quad t \in [0,T],
\end{array}
\right.
\end{equation}
subjected to the optimal control $f$ minimizing the cost functional
\begin{equation}\label{CSmod2}
\mathcal E_\gamma^N(f) := \int_{0}^T \frac{1}{N} \sum_{i=1}^N \left (\left |v_i  - \frac{1}{N} \sum_{j=1}^N v_j \right |^2    + \gamma | f(t,x_i,v_i) | \right ) dt.
\end{equation}
The interested reader can compare \eqref{CSmod}-\eqref{CSmod2} with the sparse optimal control problem studied in \cite[Section 5]{CFPT}.
\\

The main result of this work is to clarify in which sense the finite dimensional solutions of \eqref{mainmod}-\eqref{mainmod2} converges for $N \to \infty$ to a solution of the
PDE constrained problem
\begin{equation}\label{PDEmod}
\frac{\partial \mu}{\partial t} + v \cdot \nabla_x \mu = \nabla_v \cdot \left [ \left ( H \star \mu + f \right ) \mu \right ],
\end{equation}
controlled by the minimizer $f$ of the cost functional
\begin{equation}\label{PDEmod2}
\mathcal E_\psi(f) := \int_{0}^T \int_{\mathbb R^{2d}} \left ( L(x,v,\mu) + \psi( f(t,x,v) ) \right ) d\mu(t,x,v) dt,
\end{equation}
where $\mu:[0,T] \to \mathcal P_1(\mathbb R^{2d})$ is a probability measure valued weak solution to \eqref{PDEmod}. Our arguments will be based on the
combination of the concepts of mean-field limit, using techniques of optimal transport \cite{AGS}, in order to connect \eqref{mainmod} to \eqref{PDEmod},
and $\Gamma$-limit \cite{DM} in order to connect the minimizations of \eqref{mainmod2} and \eqref{PDEmod2}.
Accordingly we call this limit process {\it mean-field optimal control}.\\
 Let us stress that some of the relevant ingredients of our theory were already partially
available in the literature. In particular, the rigorous derivation of the mean-field limits to connect \eqref{mainmod} to \eqref{PDEmod} for situations
where no control is addressed, i.e., when $f \equiv 0$, has been already considered, for instance, in \cite{CanCarRos10}. Nevertheless, although it represents a
minor extension, the situation where a control $f$ is present in the equations, and it has potentially a discontinuous nature in time, requires to generalize
the results in \cite{CanCarRos10} to solutions of Carath{\'e}odory for \eqref{mainmod} \cite{Fil}. We sketch these generalizations in the Appendix for the sake
of completeness. In particular, existence, uniqueness, and stability of weak measure-valued solutions to \eqref{PDEmod} with compactly supported data will be given
in details in Theorem \ref{thm:6} and Theorem \ref{uniq}.\\
Additional tools are certain compactness arguments in $L^q((0,T),W^{1,\infty}_{loc}(\mathbb R^{2d},\mathbb R^d))$ for the derivation of a limit for the controls (Theorems \ref{thm:3} and Corollary \ref{cor:2}), and compactness
arguments in $1$-Wasserstein distance for probability measures in $\mathcal P_1(\mathbb R^{2d})$ in order to derive limits of the empirical measures to weak solutions
of \eqref{PDEmod}. Finally, the optimality conditions for the limit controls will be derived using  lower-semicontinuity arguments for the energy 
$\mathcal E_\psi(f)$ in order to obtain the $\Gamma$-$\liminf$ condition (Theorem \ref{thm:5}), and the construction of solutions to \eqref{PDEmod} in order to define a recovery sequence for the 
$\Gamma$-$\limsup$ condition (Theorem \ref{thm:6}).
\\

Beside the specific novelty of our model, where we considered collective behavior ({\it sparsely}) controlled by an external policy maker restricted by {\it limited} resources, we stress again that the originality  of our analysis stands precisely in the combination of the concepts of mean-field- and $\Gamma$-limits, where the reference topologies
are those of $ \mathcal P_1(\mathbb R^{2d})$ for the solutions and $L^q((0,T),W^{1,\infty}_{loc}(\mathbb R^{2d},\mathbb R^d))$  for the controls. This distinguishes our work from other
conceptually similar approaches where limits of finite dimensional optimal control problems to infinite dimensional control problems are considered.
We refer in particular to two main directions. \\
The first is the discretization of PDE constrained optimal control problems by means, e.g., of finite element methods.
One defines a suitable finite dimensional time and/or space discretization and shows that corresponding finite dimensional optimal control solutions converge to the 
solution of the  PDE constrained optimal control problem. Let us stress that such a type of arguments have been applied mainly for elliptic and
parabolic type of equations, and the tools used are either explicit  a priori Galerkin-type error estimates or adaptive discretizations, driven by a posteriori error 
estimates in classical Sobolev spaces. Without being able to be at all exhaustive in describing the vast literature on this well-established methodology, we refer to a classical reference \cite{glo08} and to the recent survey paper \cite{rave10} and the bibliography therein. \\
In order to encounter Vlasov-type transport equations as \eqref{PDEmod}, one needs to refer to the second main direction conceptually similar
to our approach, i.e., the mean-field games, introduced by Lasry and Lions \cite{lali07}, and independently in the optimal control community under the name
Nash Certainty Equivalence (NCE) within the work \cite{HCM03}, later greatly popularized within consensus problems, 
for instance in \cite{NCM10,NCM11}. The first fundamental difference with our work is that in (mean-field) games, each individual agent is competing freely with
the others towards the optimization of its individual goal, as for instance in the financial market, whereas in our model we are concerned with the optimization of the intervention 
of an external policy maker or coordinator endowed with rather limited resources to help the system to form a pattern, when self-organization
does not realize it autonomously, as it is a case, e.g., in modeling economical policies and government strategies.
Let us stress that in our model we are particularly interested to sparsify the control towards most effective results, and also that such an economical concept does not appear anywhere in the literature
when we deal with mean-field limits of large particle systems.
Secondly in mean-field games the stochastic component plays a relevant role (also for the technical derivation of mean-field limits), while in our deterministic model no stochastic terms are
necessarily requested in order to have sufficient regularization for deriving rigorously the mean-field limit. 
\\

The paper is organized as follows: in Section 2 we introduce the class of control functions and we prove its closedness and compactness properties, and certain lower-semicontinuity results related to the cost functional
\eqref{PDEmod2}. Section 3 is dedicated to the finite dimensional optimal control problem \eqref{mainmod}-\eqref{mainmod2} and its well-posedness. In Section 4 we address both the mean-field limit 
to connect \eqref{mainmod} to \eqref{PDEmod} and the conditions of $\Gamma$-convergence to connect the minimizations of \eqref{mainmod2} and \eqref{PDEmod2}, to eventually conclude with Section 5 where
we state our main mean-field optimal control result, which summarizes all our findings. For the sake of a broad readability of the paper and its self-containedness we also included an Appendix recalling the relevant results on Carath{\'e}odory solutions of ODEs and how
they are related via the method of the characteristics to solutions of \eqref{PDEmod}.

\section{The Space of Admissible Controls}

\subsection{Admissible controls}

Let $d \geq 1$ be the dimensionality of the control output, $n \geq 1$ be the dimensionality of the state variables (later we will consider $n=2d$). 

\begin{definition}\label{def:admcontr}
For a horizon time $T>0$, and an exponent $1\le q<+\infty$ we fix a control bound function $\ell \in L^q(0,T)$. The class of admissible control functions $\mathcal F_\ell([0,T])$
is so defined: $f \in \mathcal F_\ell([0,T])$ if and only if 
\begin{itemize}
\item[(i)] $f: [0,T] \times \mathbb R^n \to \mathbb R^d$ is a Carath{\'e}odory function, 
\item[(ii)] $f(t, \cdot) \in W^{1,\infty}_{loc}(\mathbb R^n, \mathbb R^d)$  for almost every $t \in [0,T]$, and
\item[(iii)] $|f(t,0)| + \operatorname{Lip}(f(t,\cdot),\mathbb R^d) \leq \ell(t)$ for almost every $t \in [0,T]$.
\end{itemize}
\end{definition}
Functions in the class $\mathcal F_\ell([0, T])$ can be also regarded as measurable mappings with values in Banach spaces, as we clarify in the next two remarks.

\begin{remark}\label{rem:a}
Every control function $f\in \mathcal F_\ell([0,T])$ can be identified with a mapping $f:[0, T] \to W^{1,\infty}_{loc}(\Rn;\R^d)$ where $f(t)$ is simply the function taking the value $f(t,x)$ at $x$. Let us show now that $f \in L^q((0,T), W^{1,p}(\Om,\R^d))$ for every open bounded subset $\Om \subset \Rn$ and $1< p <+\infty$, with $q$ being exactly the integrability exponent of $\ell$ . To prove that the mapping is measurable, by separability of $L^q((0,T), W^{1,p}(\Om,\R^d))$, it suffices to show weak measurability.
Since $f$ is Carath{\'e}odory, by density of atomic measures in the weak-$*$ topology of measures, the map $t \to \langle f(t), \mu \rangle$ is measurable for every $\mu \in M_b(\Om,\R^d)$. The former duality pairing is the standard one between continuous functions and measures. This holds now in particular when $\mu$ is a function in $L^{(p^{*})'}(\Om, \R^d)$, with $p^{*}$ the Sobolev exponent. Since $W^{1,p}(\Om,\R^d)$ is densely embedded into $L^{p^{*}}(\Om, \R^d)$, then $L^{(p^{*})'}(\Om, \R^d)$ is densely embedded into the dual space of $W^{1,p}(\Om,\R^d)$ endowed with the weak topology. It follows that $t \to \langle \phi, f(t)\rangle$ is measurable for all $\phi \in (W^{1,p}(\Om,\R^d))'$, as we wanted. Finally, one easily has by the assumptions that $\|f(t)\|_{W^{1,p}(\Om,\R^d)} \in L^q(0, T)$, so that $f \in L^q((0,T), W^{1,p}(\Om,\R^d))$.
\end{remark}

\begin{remark}\label{rem:b}
Conversely, consider a mapping $f:[0, T] \to W^{1,\infty}_{loc}(\Rn,\R^d)$ such that \\$f \in L^q((0,T), W^{1,p}(\Om,\R^d))$ for every open bounded subset $\Om \subset \Rn$ and $1< p <+\infty$, the identification $f(t,x)=f(t)(x)$ for all $x\in \Om$ gives us a Carath{\'e}odory function. It makes then sense to consider the subset $C_{\ell,\Om}$ of $L^q((0,T), W^{1,p}(\Om,\R^d))$ defined by
\begin{equation}\label{convesso}
C_{\ell,\Om}:=\{f\in L^q((0,T), W^{1,p}(\Om,\R^d)): |f(t,0)| + \operatorname{Lip}(f(t,\cdot),\mathbb R^d) \leq \ell(t)\hbox{ for a.e. }t \in [0,T]\}\,.
\end{equation}
It easily turns out that $C_{\ell,\Om}$ is convex. Furthermore, if $f \in C_{\ell,\Om}$ for all $\Om \subset \Rn$, it can be identified with a $f \in \mathcal F_\ell([0,T])$.
\end{remark}
In the following, functions in the class $F_\ell([0, T])$ will be identified with measurable mappings $f:[0, T] \to W^{1,\infty}_{loc}(\Rn,\R^d)$ and vice-versa, according to Remarks \ref{rem:a} and \ref{rem:b}, without further specification.

We also point out some closedness properties of the convex set $C_{\ell, \Om}$ introduced in \eqref{convesso}.

\begin{remark}\label{rem:closedness}
Fix $1<p<+\infty$ and a bounded smooth subset $\Om \subset \Rn$. Take a sequence $(f_j)_{j \in \mathbb N}$ in $C_{\ell,\Om}$
such that $f_j(t)$ converges to $f(t)$ in $W^{1,p}(\Om, \mathbb R^d)$ for a.e.\ $t \in [0,T]$. Then, for a.e.\ $t$ the $W^{1,\infty}(\Om,\R^d)$ norm of $f_j(t)$ is bounded because of the definition of $C_{\ell,\Om}$, so that $f_j(t)$ converges to $f(t)$ weakly-$*$ in $W^{1,\infty}(\Om, \mathbb R^d)$ for a.e.\ $t \in [0,T]$, and
$$
|f(t,0)| + \operatorname{Lip}(f(t,\cdot),\Omega) \leq \liminf_{j \to \infty} |f_j(t,0)| + \operatorname{Lip}(f_j(t,\cdot),\Omega)\,.
$$
It follows that $C_{\ell,\Om}$ is closed with respect to pointwise a.e.\ convergence, and therefore in the \\ $L^q((0,T), W^{1,p}(\Om,\R^d))$ norm topology, since any Cauchy sequence in $L^q((0,T), W^{1,p}(\Om,\R^d))$ has a pointwise a.e.\ converging subsequence. 
Since $C_{\ell, \Om}$ is convex, we deduce from Mazur's Lemma that it is also closed in the weak topology of $L^q((0,T), W^{1,p}(\Om,\R^d))$.
\end{remark}

In the following we shall usually fix a horizon time $T>0$ and denote $\mathcal F_\ell:=\mathcal F_\ell([0,T])$, omitting the time interval. The integrability exponent of $\ell$ will be depending on the cost functional we consider, as we will make precise in Section \ref{sec:finite}. In the case of a cost functional of the type \eqref{CSmod2}, the one we are mainly interested in, we will choose $q=1$.

\subsection{Compactness, closedness, and lower semicontinuity properties}

The following compactness result is a sort of generalization of the Dunford-Pettis theorem \cite[Theorem 1.38]{AFP00} for equi-integrable families
of functions with values in a reflexive and separable Banach spaces. Its derivation is standard, but we include its proof for the
sake of completeness.

\begin{theorem}\label{thm:0}
Let $X$ be a reflexive and separable Banach space. Let $(f_j)_{j \in \mathbb N}$ be a sequence of functions in $L^q((0,T),X)$ with $1\le q <+\infty$. Let us also assume
that there exists a map $m \in L^q(0,T)$ such that $\|f_j(t)\|_X \leq m(t)$ for almost all $t \in [0,T]$. Then there exist 
a subsequence $(f_{j_k})_{k \in \mathbb N}$ and a function $f \in L^q((0,T),X)$ such that 
\begin{equation}\label{weaklim}
\lim_{k \to \infty} \int_0^T \langle \phi(t), f_{j_k}(t, \cdot) -  f(t, \cdot) \rangle dt = 0,
\end{equation}
for all $\phi \in L^{q'}((0,T), X')$, with $q'$ the conjugate exponent of $q$, and
\begin{equation}\label{L1weak}
w-\lim_{k \to \infty} \int_{t_1}^{t_2} f_{j_k}(t) dt = \int_{t_1}^{t_2} f(t) dt, \quad \mbox{for all } t_1.t_2 \in [0,T],
\end{equation}
where the limit is in the sense of the weak topology of $X$ and the integrals are in the sense of Bochner.
\end{theorem}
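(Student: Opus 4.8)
The plan is to use a standard diagonal argument combined with the reflexivity of $X$ to extract a weakly convergent subsequence, then upgrade pointwise weak convergence on a dense set of test functions to the two desired statements. First I would observe that the hypothesis $\|f_j(t)\|_X \le m(t)$ for a.e. $t$ implies $\|f_j\|_{L^q((0,T),X)}^q = \int_0^T \|f_j(t)\|_X^q\,dt \le \int_0^T m(t)^q\,dt < +\infty$, so $(f_j)_{j}$ is bounded in $L^q((0,T),X)$. Since $X$ is reflexive and separable and $1 \le q < +\infty$, the space $L^q((0,T),X)$ is reflexive when $1 < q < +\infty$ (with dual $L^{q'}((0,T),X')$), and for the borderline case $q=1$ we cannot use reflexivity directly, which is precisely where the Dunford--Pettis-type argument enters: the bound $\|f_j(t)\|_X \le m(t)$ with $m \in L^1(0,T)$ gives equi-integrability of the scalar functions $t \mapsto \|f_j(t)\|_X$, hence of $t \mapsto \langle \phi(t), f_j(t)\rangle$ for any fixed $\phi \in L^\infty((0,T),X')$, and one combines this with the weak compactness of bounded sets in the reflexive space $X$ applied fiberwise.

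Concretely, I would proceed as follows. Fix a countable dense set $\{\phi_m\}_{m \in \mathbb N}$ in $L^{q'}((0,T),X')$ (which exists by separability of $X'$, a consequence of separability and reflexivity of $X$). For each $m$ the scalar sequence $\int_0^T \langle \phi_m(t), f_j(t)\rangle\,dt$ is bounded by $\|\phi_m\|_{L^{q'}}\,\|m\|_{L^q}$, so by a Cantor diagonal extraction there is a subsequence $(f_{j_k})_k$ along which all these scalars converge. A density argument, using the uniform bound $\left|\int_0^T \langle \phi, f_{j_k}(t)\rangle\,dt\right| \le \|\phi\|_{L^{q'}}\,\|m\|_{L^q}$, then shows the limit functional $\Lambda(\phi) := \lim_k \int_0^T \langle \phi(t), f_{j_k}(t)\rangle\,dt$ is well-defined and bounded on all of $L^{q'}((0,T),X')$. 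In the reflexive case $1<q<+\infty$ one has $L^q((0,T),X) = \bigl(L^{q'}((0,T),X')\bigr)'$, so $\Lambda$ is represented by some $f \in L^q((0,T),X)$, giving \eqref{weaklim}. In the case $q=1$, one instead identifies the weak limit directly: the fiberwise weak limits, together with equi-integrability, produce $f \in L^1((0,T),X)$ with $\Lambda(\phi) = \int_0^T \langle \phi(t), f(t)\rangle\,dt$; here I would invoke the measurability discussion analogous to Remark \ref{rem:a} to ensure $f$ is a genuine Bochner-measurable function, and the bound $\|f(t)\|_X \le m(t)$ a.e. (inherited by lower semicontinuity of the norm under weak limits, tested against a countable dense family in $X'$) to get integrability.

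For \eqref{L1weak}, I would specialize: given $t_1 < t_2$ in $[0,T]$ and any $\xi \in X'$, the function $\phi(t) := \mathbf{1}_{[t_1,t_2]}(t)\,\xi$ lies in $L^{q'}((0,T),X')$, and applying \eqref{weaklim} to this $\phi$ yields
$$
\lim_{k\to\infty} \Bigl\langle \xi, \int_{t_1}^{t_2} f_{j_k}(t)\,dt \Bigr\rangle = \Bigl\langle \xi, \int_{t_1}^{t_2} f(t)\,dt \Bigr\rangle,
$$
where I have used that the Bochner integral commutes with the continuous linear functional $\xi$. Since $\xi \in X'$ was arbitrary, this is exactly weak convergence in $X$ of $\int_{t_1}^{t_2} f_{j_k}(t)\,dt$ to $\int_{t_1}^{t_2} f(t)\,dt$, and the Bochner integrals are well-defined because $\|f_{j_k}(\cdot)\|_X$ and $\|f(\cdot)\|_X$ are dominated by $m \in L^q(0,T) \subset L^1(t_1,t_2)$.

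I expect the main obstacle to be the borderline case $q = 1$: there $L^1((0,T),X)$ is not reflexive, so one genuinely needs the Dunford--Pettis mechanism — verifying equi-integrability of the $X$-valued family from the scalar domination $\|f_j(t)\|_X \le m(t)$, extracting fiberwise weak limits in a measurable way, and checking that the resulting object is Bochner measurable with $L^1$ norm. The cases $1 < q < +\infty$ are comparatively routine, being essentially weak compactness of bounded sets in a reflexive Banach space together with a diagonal argument on a countable dense set of test functions.
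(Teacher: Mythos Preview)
Your approach for $1<q<+\infty$ is correct and in fact more direct than the paper's: reflexivity of $L^q((0,T),X)$ gives weak sequential compactness of bounded sets immediately, and the derivation of \eqref{L1weak} from \eqref{weaklim} by testing against $\phi=\mathbf{1}_{[t_1,t_2]}\xi$ is clean.

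The case $q=1$, however, has a genuine gap. Your diagonal argument requires a countable dense subset of $L^{q'}((0,T),X')=L^\infty((0,T),X')$, but this space is \emph{not} separable, so the argument as written does not go through. You then fall back on ``fiberwise weak limits,'' but this is where the real work lies and you have only named it, not done it: for each fixed $t$ the set $\{f_j(t)\}$ is weakly relatively compact in $X$, but the subsequence along which $f_{j_k}(t)$ converges weakly depends on $t$, and there is no a priori reason a single subsequence works for almost every $t$. Producing a limit $f(t)$ that is Bochner measurable and satisfies \eqref{weaklim} requires either a vector-valued Dunford--Pettis theorem (e.g.\ Diestel's characterization of weak compactness in $L^1((0,T),X)$ for reflexive $X$) or some other nontrivial device; you would need to state and invoke such a result explicitly.

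The paper circumvents this entirely by reversing the order of the two conclusions. It first proves \eqref{L1weak}: define the primitives $F_j(s)=\int_0^s f_j(t)\,dt$, observe they are equi-absolutely continuous with modulus $\int_{t_1}^{t_2} m$, and apply Ascoli--Arzel\`a with values in $X$ equipped with the weak topology (metrizable on bounded sets by separability) to extract $F_{j_k}\to F$ pointwise weakly. The Radon--Nikodym property of the reflexive space $X$ then yields $f\in L^1((0,T),X)$ with $F(t_2)-F(t_1)=\int_{t_1}^{t_2} f$, and lower semicontinuity of the norm gives $\|f(t)\|_X\le m(t)$, hence $f\in L^q$. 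Only afterwards does the paper deduce \eqref{weaklim}, by extending from intervals to Borel sets to $X'$-valued simple functions and finally, by density and dominated convergence, to all of $L^{q'}((0,T),X')$. This route treats all $1\le q<+\infty$ uniformly and never needs separability of $L^\infty$.
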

\begin{proof}
Let us first of all recall that, as $X$ is reflexive, it has the Radon-Nikodym property: in particular, given $F \in \operatorname{AC}([0,T],X)$ there exists
$f \in L^1((0,T),X)$ such that
$$
F(t_2)-F(t_1) = \int_{t_1}^{t_2} f(t) dt, \quad \mbox{for all } t_1,t_2 \in [0,T].
$$
Let us now define $F_j(s) = \int_0^s f_j(t) dt$, and we have 
$$
\| F_j(t_2) - F_j(t_1) \|_X \leq \int_{t_1}^{t_2} m(t) dt, \quad \mbox{for all } t_1,t_2 \in [0,T]. 
$$
Therefore $F_j$ are equi-bounded and equi-absolutely continuous, and by Ascoli-Arzel\`a theorem, there exist a subsequence 
$(F_{j_k})_{k \in \mathbb N}$ and a function $F \in \operatorname{AC}([0,T],X)$ such that 
\begin{equation}\label{weaklimF}
w-\lim_{k \to \infty} ( F_{j_k}(t_2) - F_{j_k}(t_1)) = F(t_2) - F(t_1),
\end{equation}
weakly in $X$ for all $t_1,t_2 \in [0,T]$ (this is a consequence of the separability of $X$). By the aforementioned Radon-Nikodym property, there exists
$f \in L^1((0,T),X)$ such that
$$
F(t_2)-F(t_1) = \int_{t_1}^{t_2} f(t) dt, \quad \mbox{for all } t_1,t_2 \in [0,T].
$$
Moreover, by the weak limit \eqref{weaklimF} and lower-semicontinuity of the norm of $X$,
$$
\| F(t_2)-F(t_1)\|_X \leq \liminf_{k \to \infty} \| F_{j_k}(t_2) - F_{j_k}(t_1) \|_X \leq \int_{t_1}^{t_2} m(t) dt,
$$
hence the modulus of absolute continuity of $F$ is again $\int_{t_1}^{t_2} m(t) dt$ and, by the Lebsegue theorem for
functions with values in Banach spaces \cite[2.9.9]{Fed69}, we obtain $\|f(t)\|_X \leq m(t)$ 
for almost every $t \in [0,T]$. It follows that $f \in L^q((0,T),X)$. As 
$$
w-\lim_{k \to \infty} \int_{t_1}^{t_2} f_{j_k}(t) dt = \int_{t_1}^{t_2} f(t) dt,
$$ 
weakly in $X$ for all $t_1,t_2 \in [0,T]$, we actually have
$$
w-\lim_{k \to \infty} \int_A f_{j_k}(t) dt = \int_A f(t) dt,
$$ 
for all $A$ open subsets of $[0,T]$, and therefore for all Borel subsets $A$ of $[0,T]$. Hence for any simple function
$\phi (t) = \sum_{i=1}^m \phi_i \chi_{A_i}(t)$ where $\phi_i \in X'$, we have $\phi \in L^\infty((0,T),X')$ and
$$
\lim_{k \to \infty} \int_0^T \langle \phi(t), f_{j_k}(t) \rangle dt = \int_0^T \langle \phi(t), f(t) \rangle dt,
$$ 
where now the convergence is of real values, and one concludes the proof by density of such simple functions in $L^{q'}((0,T),X')$
and an application of the dominated convergence theorem taking into account the $q$-integrability of $m$.
\end{proof}

The following local compactness property is fundamental to our analysis.

\begin{theorem}\label{thm:1}
Let $\Omega$ be a bounded, smooth, and open subset of $\mathbb R^n$ and let $1<p < \infty$ and $1\le q<+\infty$. 
Assume that  $(f_j)_{j \in \mathbb N}$ be a sequence of functions  in $L^q((0,T),W^{1,p}(\Omega, \mathbb R^d))$ such that
$$
|f_j(t,0) | + \operatorname{Lip}(f_j(t, \cdot),\Omega) \leq \ell(t) \in L^q(0,T),\quad \mbox{for almost every } t \in [0,T], \mbox{ for all } j \in \mathbb N.
$$
Then there exist a subsequence  $(f_{j_k})_{k \in \mathbb N}$ and a function $f \in L^q((0,T),W^{1,p}(\Omega, \mathbb R^d))$ such that
\begin{equation}\label{weakconv}
w-\lim_{k \to \infty} f_{j_k} = f,
\end{equation}
weakly in $L^q((0,T),W^{1,p}(\Omega, \mathbb R^d))$, and 
\begin{equation}\label{closedness}
|f(t,0) | + \operatorname{Lip}(f(t, \cdot),\Omega) \leq \ell(t), \mbox{ for almost every } t \in [0,T].
\end{equation}
\end{theorem}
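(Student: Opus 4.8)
The plan is to obtain a weakly convergent subsequence from Theorem \ref{thm:0} applied with $X = W^{1,p}(\Omega,\mathbb{R}^d)$, and then to recover the pointwise bound \eqref{closedness} from the weak closedness of the convex set $C_{\ell,\Omega}$ of \eqref{convesso}, which has already been established in Remark \ref{rem:closedness}.

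First I would verify the pointwise domination hypothesis of Theorem \ref{thm:0}: for a.e.\ $t \in [0,T]$, the bound on $\operatorname{Lip}(f_j(t,\cdot),\Omega)$ controls $\|\nabla f_j(t,\cdot)\|_{L^p(\Omega)}$, and, together with the bound on $|f_j(t,0)|$ and the boundedness of $\Omega$, it also controls $\|f_j(t,\cdot)\|_{L^p(\Omega)}$, so that $\|f_j(t)\|_{W^{1,p}(\Omega,\mathbb{R}^d)} \le m(t) := C(\Omega)\,\ell(t)$ with $m \in L^q(0,T)$. Since $1 < p < +\infty$, the space $X = W^{1,p}(\Omega,\mathbb{R}^d)$ is reflexive and separable, so Theorem \ref{thm:0} produces a subsequence $(f_{j_k})_{k \in \mathbb{N}}$ and a function $f \in L^q((0,T),X)$ with $\int_0^T \langle \phi(t), f_{j_k}(t) - f(t)\rangle\,dt \to 0$ for every $\phi \in L^{q'}((0,T),X')$. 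For $1 < q < +\infty$ this is precisely the weak convergence \eqref{weakconv}; for $q = 1$ the same conclusion holds, since $X$ reflexive implies $X'$ reflexive, hence $X'$ has the Radon--Nikodym property and $(L^1((0,T),X))' = L^\infty((0,T),X')$.

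It remains to prove \eqref{closedness}. The hypotheses say exactly that every $f_j$ belongs to the set $C_{\ell,\Omega}$. By Remark \ref{rem:closedness}, $C_{\ell,\Omega}$ is convex and closed in the norm topology of $L^q((0,T),W^{1,p}(\Omega,\mathbb{R}^d))$, hence, via Mazur's lemma, closed also in its weak topology; therefore the weak limit $f$ belongs to $C_{\ell,\Omega}$, which is \eqref{closedness}. I do not expect any serious obstacle: the one genuinely substantive ingredient, the weak closedness of $C_{\ell,\Omega}$, is already available from Remark \ref{rem:closedness}, so the only point deserving a little care is matching the convergence delivered by Theorem \ref{thm:0} with the weak topology of $L^q((0,T),W^{1,p}(\Omega,\mathbb{R}^d))$ in the endpoint case $q=1$.
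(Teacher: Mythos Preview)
Your proposal is correct and follows essentially the same route as the paper: apply Theorem \ref{thm:0} with $X=W^{1,p}(\Omega,\mathbb R^d)$ to extract the weakly convergent subsequence, and then invoke the weak closedness of $C_{\ell,\Omega}$ from Remark \ref{rem:closedness} to obtain \eqref{closedness}. The only difference is that you spell out the domination $\|f_j(t)\|_{W^{1,p}}\le C(\Omega)\ell(t)$ and the $q=1$ duality identification $(L^1((0,T),X))'=L^\infty((0,T),X')$, which the paper leaves implicit.
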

\begin{proof}
By an application of Theorem \ref{thm:0} for $X =W^{1,p}(\Omega, \mathbb R^d)$, there exist a subsequence  $(f_{j_k})_{k \in \mathbb N}$ and a function $f \in L^q((0,T),W^{1,p}(\Omega, \mathbb R^d))$ such that
\eqref{weakconv} holds. It remains to show \eqref{closedness}. Defining $C_{\ell, \Om}$ as in \eqref{convesso},   \eqref{closedness} is equivalent to saying that $f \in C_{\ell, \Om}$. The conclusion is therefore immediate, since $C_{\ell, \Om}$ is closed with respect to the weak topology $L^q((0,T),W^{1,p}(\Omega, \mathbb R^d))$ by Remark \ref{rem:closedness}.
\end{proof}
An immediate consequence of the previous theorem is the following weak compactness result in $\mathcal F_\ell$.

\begin{corollary}\label{cor:1}
Let $1<p < \infty$. Assume that  $(f_j)_{j \in \mathbb N}$ be a sequence of functions in $\mathcal F_\ell$ for a given function $\ell \in L^q(0,T)$, $1\le q<+\infty$. Then there exist a subsequence  $(f_{j_k})_{k \in \mathbb N}$ and a function $f\in \mathcal F_\ell$, such that
\begin{equation}\label{locweakconv}
\lim_{k \to \infty} \int_0^T \langle \phi(t), f_{j_k}(t, \cdot) -  f(t, \cdot) \rangle dt = 0,
\end{equation}
for all $\phi \in L^{q'}([0,T], H^{-1,p'}(\mathbb R^n, \mathbb R^d))$ such that $\supp (\psi(t)) \Subset  \Omega$ for all $t \in [0,T]$, where $\Omega$ is a relatively compact set in $\mathbb R^n$. Here the symbol $\langle \cdot, \cdot \rangle$ denotes the duality between $W^{1,p}$ and its dual $H^{-1,p'}$.
\end{corollary}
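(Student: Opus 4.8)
The plan is to reduce the statement to Theorem~\ref{thm:1} by exhausting $\Rn$ with an increasing sequence of bounded smooth open sets, extracting a nested family of subsequences by repeated application of that theorem, and then passing to a diagonal subsequence whose local weak limits glue to a single admissible control in $\mathcal F_\ell$.

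First I would fix smooth bounded open sets $\Om_1 \Subset \Om_2 \Subset \cdots$ with $\bigcup_m \Om_m = \Rn$. Each $f_j \in \mathcal F_\ell$ restricts, by Remark~\ref{rem:a}, to an element of $L^q((0,T),W^{1,p}(\Om_m,\R^d))$ satisfying $|f_j(t,0)| + \operatorname{Lip}(f_j(t,\cdot),\Om_m) \le \ell(t)$ for a.e.\ $t$, so Theorem~\ref{thm:1} on $\Om_1$ produces a subsequence $(f^{(1)}_j)_j$ of $(f_j)_j$ and a limit $f^{(1)}$, converging weakly in $L^q((0,T),W^{1,p}(\Om_1,\R^d))$, and still obeying the admissibility bound on $\Om_1$. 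Iterating, Theorem~\ref{thm:1} on $\Om_{m+1}$ applied to $(f^{(m)}_j)_j$ yields a further subsequence $(f^{(m+1)}_j)_j$ and a limit $f^{(m+1)}$ obeying the bound on $\Om_{m+1}$; by uniqueness of weak limits and weak continuity of the restriction $W^{1,p}(\Om_{m+1},\R^d) \to W^{1,p}(\Om_m,\R^d)$ one gets $f^{(m+1)}|_{\Om_m} = f^{(m)}$ for a.e.\ $t$, so the $f^{(m)}$ are the local pieces of a single map $f:[0,T] \to W^{1,\infty}_{loc}(\Rn,\R^d)$. Setting $f_{j_k} := f^{(k)}_k$, the tail $(f_{j_k})_{k \ge m}$ is a subsequence of $(f^{(m)}_j)_j$ for every $m$, hence $f_{j_k} \wto f$ weakly in $L^q((0,T),W^{1,p}(\Om_m,\R^d))$ for each $m$.

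Next I would verify that $f \in \mathcal F_\ell$: off a single $\Lone$-null set in $[0,T]$ the inequality $|f(t,0)| + \operatorname{Lip}(f(t,\cdot),\Om_m) \le \ell(t)$ holds for every $m$ at once, and since any two points of $\Rn$ lie in some $\Om_m$ we have $\operatorname{Lip}(f(t,\cdot),\Rn) = \sup_m \operatorname{Lip}(f(t,\cdot),\Om_m)$, which upgrades the bound to $|f(t,0)| + \operatorname{Lip}(f(t,\cdot),\Rn) \le \ell(t)$ a.e.; in particular $f(t,\cdot) \in W^{1,\infty}_{loc}(\Rn,\R^d)$ a.e. Since also $f \in L^q((0,T),W^{1,p}(\Om,\R^d))$ for every bounded open $\Om \subset \Rn$ (which holds because it holds on each $\Om_m$), Remark~\ref{rem:b} gives $f \in \mathcal F_\ell$. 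To conclude \eqref{locweakconv}, given $\phi \in L^{q'}((0,T),H^{-1,p'}(\Rn,\R^d))$ with $\supp \phi(t) \Subset \Om$ for all $t$, $\Om$ relatively compact, I would choose $m$ with $\overline{\Om} \Subset \Om_m$ and a cutoff $\chi \in C^\infty_c(\Om_m)$ with $\chi \equiv 1$ on a neighbourhood of $\overline{\Om}$. Multiplication by $\chi$ followed by extension by zero is a bounded linear, hence weakly continuous, operator $W^{1,p}(\Om_m,\R^d) \to W^{1,p}(\Rn,\R^d)$, so $\chi f_{j_k} \wto \chi f$ weakly in $L^q((0,T),W^{1,p}(\Rn,\R^d))$; moreover $\langle \phi(t), g(t,\cdot) \rangle = \langle \phi(t), \chi\, g(t,\cdot) \rangle$ for $g = f_{j_k}$ and for $g = f$, because $\phi(t)$ is supported where $\chi \equiv 1$. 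Testing against $\phi$ then yields $\lim_{k \to \infty} \int_0^T \langle \phi(t), f_{j_k}(t,\cdot) - f(t,\cdot) \rangle \, dt = 0$, which is \eqref{locweakconv}.

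The only genuinely delicate point is this last identification: giving meaning to the pairing $\langle \phi(t), f_{j_k}(t,\cdot) \rangle$ when $f_{j_k}(t,\cdot)$ is merely in $W^{1,\infty}_{loc}(\Rn,\R^d)$ rather than in $W^{1,p}(\Rn,\R^d)$, and checking that replacing it by the cut-off $\chi f_{j_k}(t,\cdot)$ changes neither the value of the pairing nor the weak-convergence conclusion. Everything else — the exhaustion, the repeated use of Theorem~\ref{thm:1}, the diagonal extraction, and the gluing of the limit — is routine bookkeeping.
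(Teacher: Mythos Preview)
Your proof is correct and follows exactly the same route as the paper: an invading sequence of bounded smooth domains, repeated application of Theorem~\ref{thm:1}, a diagonal extraction, the identity $\operatorname{Lip}(f(t,\cdot),\Rn)=\sup_m \operatorname{Lip}(f(t,\cdot),\Om_m)$ to recover $f\in\mathcal F_\ell$, and finally the observation that any relatively compact $\Om$ sits inside some $\Om_m$. The paper's proof is terser on the last step (it simply invokes $\Om\subset\Om_h$ without spelling out the cutoff), whereas your explicit use of $\chi$ makes the meaning of the pairing $\langle\phi(t),f_{j_k}(t,\cdot)\rangle$ for $f_{j_k}(t,\cdot)\in W^{1,\infty}_{loc}$ fully transparent; this is a welcome clarification rather than a different argument.
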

\begin{proof}
By considering an invading countable sequence $(\Omega_h)_{h \in \mathbb N}$ of bounded, smooth, and open subsets of $\mathbb R^n$ and using a diagonal argument, one shows that 
there exist a subsequence  $(f_{j_k})_{k \in \mathbb N}$ and a function $f \in L^q((0,T),W^{1,p}(\Omega_h, \mathbb R^d))$ such that $w-\lim_{k \to \infty} f_{j_k} = f$ weakly in $L^q((0,T),W^{1,p}(\Omega_h, \mathbb R^d))$ for all $h \in \mathbb N$, and  
$$
|f(t,0) | + \operatorname{Lip}(f(t, \cdot),\mathbb R^n) = \sup_{h \in\mathbb N} |f(t,0) | + \operatorname{Lip}(f(t, \cdot),\Omega_h) \leq \ell(t), \mbox{ for almost every } t \in [0,T].
$$
Hence actually $f \in \mathcal F_\ell$. In order to conclude the validity of \eqref{locweakconv}, it is now sufficient to observe
that if $\Omega$ is relatively compact, then there exists $h \in \mathbb N$ such that $\Omega \subset \Omega_h$.
\end{proof}
\begin{remark}
By duality and Morrey's embedding theorem for $n< p < \infty$, if $\psi(t)$ is actually measure valued, then it is also  $H^{-1,p'}$-valued. This observation is used silently in 
the proofs of the results which follow.
\end{remark}

\begin{remark}
The existence of a subsequence of indexes $j_k$ independent of $t$ so that $f_{j_k}(t,\cdot)$ converges to $f(t,\cdot)$ weakly in $W^{1,p}_{loc}(\Rn,\R^d)$ for all $t \in [0, T]$ is in general false. It suffices to think of the sequence $f_{j}(t,\zeta)=\sin(2\pi jt)\zeta$ which, however, converges to $0$ in the sense of \eqref{locweakconv} as a consequence of the Riemann-Lebesgue Lemma.
\end{remark}

In the following we consider the space $\mathcal P_1(\mathbb R^n)$, consisting of all probability measures on $\mathbb R^n$ with finite
first moment. On this set we shall consider the following distance, called the {\it Monge-Kantorovich-Rubistein distance},
\begin{equation}\label{mkrdist}
\mathcal W_1(\mu,\nu)=\sup \left \{ \left | \int_{\mathbb R^n} \varphi(x) d (\mu-\nu)(x)  \right| : \varphi \in \operatorname{Lip}(\mathbb R^n), \quad \operatorname{Lip}(\varphi) \leq 1 \right \}, 
\end{equation}
where $\operatorname{Lip}(\mathbb R^n)$ is the space of Lipschitz continuous functions on $\mathbb R^n$ and $\operatorname{Lip}(\varphi)$ the Lipschitz constant of a function $\varphi$. 
Such a distance can also be represented in terms of optimal transport plans by Kantorovich duality in the following manner: if we denote $\Pi(\mu,\nu)$ the set of transference plans
between the probability measures $\mu$ and $\nu$, i.e., the set of probability measures on $\mathbb R^n \times \mathbb R^n$ with first and second marginals equal to $\mu$ and
$\nu$ respectively, then we have
\begin{equation}\label{wasserstein}
\mathcal W_1(\mu,\nu) =\inf_{\pi \in \Pi(\mu,\nu)} \left  \{ \int_{\mathbb  R^n \times \mathbb R^n} |x-y| d \pi(x,y) \right \}.
\end{equation}
In the form \eqref{wasserstein} the distance $\mathcal W_1$ is also known as the $1$-Wasserstein distance. 
We refer to \cite{AGS,vi09} for more details.

\begin{theorem}\label{thm:3}
For a given $\ell \in L^1(0,T)$, let $(f_k)_{k \in \mathbb N}$ be a sequence of functions in $\mathcal F_\ell$ converging to $f$ in the sense of \eqref{locweakconv}. Let $\mu_k:[0,T] \to \mathcal P_1(\mathbb R^{n})$ be a sequence of functions taking values
in the probability measures with finite first moment, and $\mu :[0,T] \to \mathcal P_1(\mathbb R^n)$ such that
\begin{equation}\label{unifmom}
\sup_{t \in [0,T]} \int_{\mathbb R^n} |x| d\mu_k(t,x) = M < \infty,
\end{equation}
and
\begin{equation}\label{convwass}
\lim_j \mathcal W_1(\mu_k(t),\mu(t)) = 0, \mbox{ for all } t \in [0,T].
\end{equation}
Then
\begin{equation}\label{compmu}
\lim_k \int_0^{\hat t} \langle \varphi, f_k(t,\cdot) \mu_k(t) \rangle dt = \int_0^{\hat t} \langle \varphi, f(t,\cdot) \mu(t) \rangle dt,
\end{equation}
for all $\varphi \in C_c^1(\mathbb R^n, \mathbb R^d)$ and for all $\hat t \in [0,T]$.
\end{theorem}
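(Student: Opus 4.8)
The plan is to use the pointwise (in $t$ and $k$) decomposition
\[
\langle\varphi, f_k(t,\cdot)\mu_k(t)\rangle-\langle\varphi, f(t,\cdot)\mu(t)\rangle=\langle\varphi, f_k(t,\cdot)(\mu_k(t)-\mu(t))\rangle+\langle\varphi, (f_k(t,\cdot)-f(t,\cdot))\mu(t)\rangle,
\]
to integrate it on $[0,\hat t]$, and to show that the two resulting terms tend to $0$. We first note that the limit $f$ lies in $\mathcal F_\ell$ by Corollary \ref{cor:1}, so that $f(t,\cdot)$ and $f_k(t,\cdot)$ are Lipschitz, hence of linear growth, for a.e.\ $t$; since $\varphi\in C_c^1(\mathbb R^n,\mathbb R^d)$ and $\mu_k(t),\mu(t)\in\mathcal P_1(\mathbb R^n)$, all the pairings above are well defined.

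For the \emph{first term} we exploit the compact support of $\varphi$ together with assumption (iii) of Definition \ref{def:admcontr}. Pick $R>0$ with $\supp\varphi\Subset B_R$. For a.e.\ $t$ and all $k$, the scalar function $x\mapsto\varphi(x)\cdot f_k(t,x)$ is globally Lipschitz on $\mathbb R^n$ (it vanishes outside $\supp\varphi$), with
\[
\operatorname{Lip}\big(\varphi(\cdot)\cdot f_k(t,\cdot),\mathbb R^n\big)\leq\|\varphi\|_\infty\,\operatorname{Lip}(f_k(t,\cdot),\mathbb R^n)+\|\nabla\varphi\|_\infty\sup_{|x|\le R}|f_k(t,x)|\leq C_\varphi\,\ell(t),
\]
where $C_\varphi:=\|\varphi\|_\infty+(1+R)\|\nabla\varphi\|_\infty$ and we used $|f_k(t,x)|\leq|f_k(t,0)|+\ell(t)|x|\leq(1+R)\ell(t)$ for $|x|\le R$. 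By the Kantorovich--Rubinstein duality \eqref{mkrdist} this gives
\[
|\langle\varphi, f_k(t,\cdot)(\mu_k(t)-\mu(t))\rangle|\leq C_\varphi\,\ell(t)\,\mathcal W_1(\mu_k(t),\mu(t))\,.
\]
Since $\mathcal W_1(\rho,\delta_0)=\int_{\mathbb R^n}|x|\,d\rho$, letting $k\to\infty$ in \eqref{convwass} and using \eqref{unifmom} yields $\int|x|\,d\mu(t)\le M$ for all $t$, whence $\mathcal W_1(\mu_k(t),\mu(t))\le 2M$ for all $t,k$; therefore $\ell(t)\,\mathcal W_1(\mu_k(t),\mu(t))\to 0$ for every $t$ by \eqref{convwass} and is dominated by $2M\ell(t)\in L^1(0,T)$, and dominated convergence gives $\int_0^{\hat t}|\langle\varphi, f_k(t,\cdot)(\mu_k(t)-\mu(t))\rangle|\,dt\to 0$.

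For the \emph{second term} we recast it as a special case of \eqref{locweakconv}. Fix $p>n$ and a bounded smooth open $\Omega$ with $\supp\varphi\Subset\Omega$, and let $\varphi\mu(t)$ denote the $\mathbb R^d$-valued Radon measure of density $\varphi$ with respect to $\mu(t)$: it is supported in $\supp\varphi$ and has total variation at most $\|\varphi\|_\infty$, so by Morrey's embedding $W^{1,p}(\Omega,\mathbb R^d)\hookrightarrow C^0(\overline\Omega,\mathbb R^d)$ (here $p>n$) it defines an element of $H^{-1,p'}(\mathbb R^n,\mathbb R^d)=(W^{1,p}(\Omega,\mathbb R^d))'$ with $\|\varphi\mu(t)\|_{H^{-1,p'}}\le C_\Omega\|\varphi\|_\infty$. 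Hence $t\mapsto\phi(t):=\chi_{[0,\hat t]}(t)\,\varphi\mu(t)$ is bounded, supported for every $t$ in the fixed relatively compact set $\Omega$, and measurable (being obtained from the pointwise $\mathcal W_1$-limit $\mu$ of the measurable maps $\mu_k$), so $\phi\in L^\infty((0,T),H^{-1,p'}(\mathbb R^n,\mathbb R^d))=L^{q'}((0,T),H^{-1,p'}(\mathbb R^n,\mathbb R^d))$ since $q=1$. As $\langle\varphi,(f_k(t,\cdot)-f(t,\cdot))\mu(t)\rangle=\langle\phi(t), f_k(t,\cdot)-f(t,\cdot)\rangle$ for $t\le\hat t$, applying \eqref{locweakconv} to this $\phi$ gives $\int_0^{\hat t}\langle\varphi,(f_k(t,\cdot)-f(t,\cdot))\mu(t)\rangle\,dt\to 0$; combined with the previous step this proves \eqref{compmu}.

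I expect the first step to be the delicate one: the point is to upgrade the mere \emph{pointwise} $\mathcal W_1$-convergence \eqref{convwass} into something integrable against the (only $L^1$) weight $\ell$, which is exactly why one needs a bound of the form $\operatorname{Lip}(\varphi\cdot f_k(t,\cdot))\le C_\varphi\ell(t)$ uniform in $k$ — and this is where the compact support of $\varphi$ and the full bound (iii) (in particular the $|f_k(t,0)|$ term, needed to control $\sup_{B_R}|f_k(t,\cdot)|$) are essential. A more routine but necessary point is the identification of $\varphi\mu(t)$ as an admissible, measurable, $H^{-1,p'}$-valued test field, so that the weak convergence \eqref{locweakconv} can legitimately be invoked.
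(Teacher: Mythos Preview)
Your proof is correct and follows essentially the same approach as the paper's: the same splitting into a ``$\mu_k-\mu$'' term controlled via the Lipschitz bound $\operatorname{Lip}(\varphi\cdot f_k(t,\cdot))\le C_\varphi\,\ell(t)$, Kantorovich--Rubinstein duality, and dominated convergence, and a ``$f_k-f$'' term handled by recognizing $t\mapsto\varphi\mu(t)$ as an admissible $L^\infty((0,T),H^{-1,p'})$ test field in \eqref{locweakconv}. Your treatment is in fact slightly more careful about the dominating bound on $\mathcal W_1(\mu_k(t),\mu(t))$ and about the measurability and $H^{-1,p'}$-identification of $\varphi\mu(t)$, points the paper leaves implicit.
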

\begin{proof}
Let us again fix $p>n$. Once we fix $\varphi \in C_c^1(\mathbb R^n, \mathbb R^d)$,  by the assumption $f_k \in \mathcal F$ we have
\begin{eqnarray*}
\operatorname{Lip}(\varphi f_k(t,\cdot)) &\leq& \ell(t) \|\varphi\|_\infty+ \|\nabla \varphi\|_\infty \| f_k(t,\cdot) \|_{L^\infty(B(0,R))} \nonumber \\
&\leq& \ell(t) \Big (\|\varphi\|_\infty+ \|\nabla \varphi\|_\infty (1+R) \Big),
\end{eqnarray*}
where $R>0$ is such that $\supp(\varphi) \Subset B(0,R)$. It follows that
\begin{eqnarray}
&& \limsup_k \left | \int_0^{\hat t} \langle \varphi, f_k(t,\cdot) \mu_k(t) \rangle dt- \int_0^{\hat t} \langle \varphi, f_k(t,\cdot) \mu(t) \rangle dt \right | \nonumber \\
&\leq& \Big (\|\varphi\|_\infty+ \|\nabla \varphi\|_\infty (1+R) \label{equiLip} \Big) \limsup_k  \int_0^T  \ell(t) \mathcal W_1(\mu_k(t),\mu(t)) dt. \label{vanishint}
\end{eqnarray}
From \eqref{convwass} we have the vanishing pointwise convergence almost everywhere of the latter integrand 
$$\ell(t)  \mathcal W_1(\mu_k(t),\mu(t)) \to 0, \quad k \to \infty.$$ 
Moreover, by recalling the definition \eqref{wasserstein} and using the uniform first moment condition \eqref{unifmom}, we obtain
\begin{equation}\label{boundw1}
\mathcal W_1(\mu_k(t),\mu(t)) \leq \int_{\mathbb  R^n \times \mathbb R^n} |x-y| d\mu_k(t,x) d\mu(t,y) \leq M + \int_{\mathbb R^n} |y|d\mu(y),
\end{equation}
uniformly with respect to $t \in [0,T]$. Hence, by dominated convergence theorem applied to \eqref{vanishint} we finally have 
$$
\lim_{k \to \infty} \left | \int_0^{\hat t} \langle \varphi, f_k(t,\cdot) \mu_k(t) \rangle dt- \int_0^{\hat t} \langle \varphi, f_k(t,\cdot) \mu(t) \rangle dt \right | =0.
$$
Therefore, it is sufficient now to show that
$$
\lim_k  \int_0^{\hat t} \langle \varphi, f_k(t,\cdot) \mu(t) \rangle dt =\int_0^{\hat t} \langle \varphi, f(t,\cdot) \mu(t) \rangle dt.
$$
This follows immediately from Corollary \ref{cor:1}, because $t \to \varphi \mu(t) $ is a map belonging to the space $L^\infty([0,\hat t], H^{-1,p'}(\mathbb R^n, \mathbb R^d))$ with 
uniform compact support, since $\varphi$ is such.
\end{proof}
As in the definition of a weak solution of the equation \eqref{PDEmod} the role of $\varphi$ is played actually by $\nabla_v \varphi$ (where $\varphi \in C_c^1(\mathbb R^n)$), see formula \eqref{seconda} in the proof of Theorem \ref{thm:5}, we need
to extend the validity of Theorem \ref{thm:3} as follows.
\begin{corollary}\label{cor:2}
The statement of Theorem \ref{thm:3} actually holds also for $\varphi \in C_c^0(\mathbb R^n,\mathbb R^d)$.
\end{corollary}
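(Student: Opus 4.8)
The plan is to deduce Corollary~\ref{cor:2} from Theorem~\ref{thm:3} by a density argument, the core being a uniform (in $k$) continuity estimate for the pairing with respect to the sup-norm of the test function. First I would record the elementary bound: for $f\in\mathcal F_\ell$ and a probability measure $\nu$ on $\mathbb R^n$, Definition~\ref{def:admcontr}(iii) gives
$$
\int_{\mathbb R^n}|f(t,x)|\,d\nu(x)\le\int_{\mathbb R^n}\big(|f(t,0)|+\operatorname{Lip}(f(t,\cdot),\mathbb R^n)\,|x|\big)\,d\nu(x)\le\ell(t)\Big(1+\int_{\mathbb R^n}|x|\,d\nu(x)\Big),
$$
so that $f(t,\cdot)\nu$ is a finite $\mathbb R^d$-valued measure for a.e.\ $t$, the pairing $\langle\varphi,f(t,\cdot)\nu\rangle=\int_{\mathbb R^n}\varphi(x)\cdot f(t,x)\,d\nu(x)$ is well defined for every $\varphi\in C_c^0(\mathbb R^n,\mathbb R^d)$, and the left-hand side of \eqref{compmu} makes sense. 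Moreover, testing the $1$-Lipschitz function $x\mapsto\min(|x|,m)$ against $\mu_k(t)$ and $\mu(t)$, using \eqref{convwass} and \eqref{unifmom} and then letting $m\to\infty$, gives $\int_{\mathbb R^n}|x|\,d\mu(t,x)\le M$ for every $t$. Combining these facts, for every $\varphi\in C_c^0(\mathbb R^n,\mathbb R^d)$ and $\hat t\in[0,T]$,
$$
\left|\int_0^{\hat t}\langle\varphi,f_k(t,\cdot)\mu_k(t)\rangle\,dt\right|\le(1+M)\,\|\ell\|_{L^1(0,T)}\,\|\varphi\|_\infty
$$
uniformly in $k$, and likewise with $(f_k,\mu_k)$ replaced by $(f,\mu)$.

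Then I would fix $\varphi\in C_c^0(\mathbb R^n,\mathbb R^d)$ and choose, by mollification, functions $\varphi_j\in C_c^1(\mathbb R^n,\mathbb R^d)$ with $\|\varphi_j-\varphi\|_\infty\to0$. Applying the displayed uniform estimate to $\varphi-\varphi_j\in C_c^0(\mathbb R^n,\mathbb R^d)$, and applying Theorem~\ref{thm:3} to each fixed $\varphi_j\in C_c^1$, the triangle inequality yields, for every $\hat t\in[0,T]$,
$$
\limsup_{k\to\infty}\left|\int_0^{\hat t}\langle\varphi,f_k(t,\cdot)\mu_k(t)\rangle\,dt-\int_0^{\hat t}\langle\varphi,f(t,\cdot)\mu(t)\rangle\,dt\right|\le 2(1+M)\,\|\ell\|_{L^1(0,T)}\,\|\varphi-\varphi_j\|_\infty,
$$
and letting $j\to\infty$ proves \eqref{compmu} for all $\varphi\in C_c^0(\mathbb R^n,\mathbb R^d)$.

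I do not expect a genuine obstacle here: the whole point is that $C_c^1$ is sup-norm dense in $C_c^0$ and that the functional $\varphi\mapsto\int_0^{\hat t}\langle\varphi,f_k(t,\cdot)\mu_k(t)\rangle\,dt$ is sup-norm Lipschitz uniformly in $k$, which is immediate from the sublinear growth built into Definition~\ref{def:admcontr}(iii) and the uniform first-moment bound \eqref{unifmom}. The only points that need a word are that \eqref{unifmom} transfers to the limit $\mu$ (handled above by lower semicontinuity of the first moment under $\mathcal W_1$-convergence) and that the pairing is meaningful on $C_c^0$ (also checked above); both are routine.
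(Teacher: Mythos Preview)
Your proof is correct and follows the same density strategy as the paper: establish a uniform-in-$k$ bound on $\varphi\mapsto\int_0^{\hat t}\langle\varphi,f_k(t,\cdot)\mu_k(t)\rangle\,dt$ that is proportional to $\|\varphi\|_\infty$, then approximate $\varphi\in C_c^0$ by $C_c^1$ functions and invoke Theorem~\ref{thm:3}. The only (harmless) difference is in how you obtain the uniform bound: the paper exploits the compact support of $\varphi$ to get $|\langle\varphi,f_k(t,\cdot)\mu_k(t)\rangle|\le\|\varphi\|_\infty\,\ell(t)(1+R)$ with $R$ such that $\operatorname{supp}\varphi\subset B(0,R)$, whereas you use the first-moment bound \eqref{unifmom} to get the constant $(1+M)$---this makes your extra step of transferring \eqref{unifmom} to the limit $\mu$ unnecessary, but it does no harm.
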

\begin{proof}
Let us notice that 
\begin{equation}\label{unifcont}
|\langle \varphi, f_k(t,\cdot) \mu_k(t) \rangle| \leq \|\varphi\|_\infty \| f_k(t,\cdot) \|_{L^\infty(B(0,R))} \leq  \|\varphi\|_\infty \ell(t) (1+R),
\end{equation}
where $R$ is such that $\supp(\varphi) \Subset B(0,R)$. By uniform approximation by functions in $C_c^1(\mathbb R^n,\mathbb R^d)$, 
the estimate \eqref{unifcont} and Theorem \ref{thm:3} give the thesis.
\end{proof}

The following lower-semicontinuity result will prove to be useful in the proof of Theorem \ref{thm:4} and Corollary \ref{cor:3}. Here the integrability of $\ell$, depending on condition \eqref{psilip} below, plays a key role.

\begin{theorem}\label{thm:4+}
Consider a nonnegative convex function $\psi \colon \R^d\to[0,+\infty)$ satisfying the following condition: there exists a constant $C\ge 0$ and $1\le q <+\infty$ such that, for all $R>0$,
\begin{equation}\label{psilip}
\operatorname{Lip}(\psi, B(0, R))\le C R^{q-1}
\end{equation}
where $B(0,R)$ is the ball of radius $R$ in $\R^d$ centered at $0$. For $q$ as in \eqref{psilip}, fix $\ell \in L^q(0,T)$ and consider a sequence of functions $(f_k)_{k \in \mathbb N}$ in $\mathcal F_\ell$ converging to $f$ in the sense of \eqref{locweakconv}. Let $\mu_k:[0,T] \to \mathcal P_1(\mathbb R^{n})$ be a sequence of functions taking values
in the probability measures with finite first moment such that
\begin{equation}\label{bound}
\supp (\mu_k(t)) \Subset  \Omega,
\end{equation}
for a.e.\ $t \in [0,T]$ and $k \in \N$, where $\Omega$ is a relatively compact open set in $\mathbb R^n$. Let $\mu :[0,T] \to \mathcal P_1(\mathbb R^n)$, and assume that 
\begin{equation}\label{convwass2}
\lim_k \mathcal W_1(\mu_k(t),\mu(t)) = 0, \mbox{ for a.e.\ } t \in [0,T].
\end{equation} 
Then, we have
\begin{equation}\label{semicont+}
\liminf_{k\to +\infty}\int_0^T \langle \psi(f_k(t,\cdot)), \mu_k(t) \rangle\,dt \ge \int_0^T \langle \psi(f(t,\cdot)), \mu(t) \rangle\,dt. 
\end{equation}
\end{theorem}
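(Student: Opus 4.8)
The plan is to deduce \eqref{semicont+} by representing the convex functional $f\mapsto\int_0^T\langle\psi(f(t,\cdot)),\mu(t)\rangle\,dt$ as a supremum of functionals that are linear in $f\mu$ and in $\mu$, hence continuous along the two convergences at our disposal: the convergence $f_k\mu_k\to f\mu$ against continuous compactly supported vector fields supplied by Theorem~\ref{thm:3} and Corollary~\ref{cor:2}, and the narrow convergence $\mu_k(t)\rightharpoonup\mu(t)$ for a.e.\ $t$, which follows from \eqref{convwass2} since $\mathcal W_1$-convergence implies narrow convergence. As $\psi$ is convex and finite on $\R^d$ it is continuous and coincides with its biconjugate, hence it is the supremum of a countable family of affine minorants: there are $p_i\in\R^d$ with $\psi^*(p_i)<+\infty$ (where $\psi^*$ is the convex conjugate of $\psi$) such that $\psi(w)=\sup_i\big(p_i\cdot w-\psi^*(p_i)\big)$ for all $w$. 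Consequently, for every pair $(\varphi,\eta)$ of functions on $[0,T]\times\R^n$ satisfying the constraint $\varphi(t,x)\cdot w+\eta(t,x)\le\psi(w)$ for all $w\in\R^d$, Fenchel's inequality gives $\int_0^T\int_{\R^n}(\varphi\cdot f+\eta)\,d\mu\,dt\le\int_0^T\langle\psi(f(t,\cdot)),\mu(t)\rangle\,dt$; the crux of the argument is the converse, namely that equality is reached when the supremum runs over such admissible pairs that are in addition smooth, uniformly bounded, and compactly supported in $x$.

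To build almost optimal admissible pairs I would fix a finite subfamily $\{p_1,\dots,p_m\}$ of the slopes, set $\psi_m(w):=\max_{1\le i\le m}\big(p_i\cdot w-\psi^*(p_i)\big)\le\psi(w)$, and note that $\int_0^T\int\psi_m(f)\,d\mu\,dt\nearrow\int_0^T\int\psi(f)\,d\mu\,dt$ by monotone convergence, so it suffices to recover each integral involving $\psi_m$ up to an error vanishing with $m$. By \eqref{bound} and \eqref{convwass2} the measure $\mathcal L^1\otimes\mu$ is a finite Radon measure carried by the compact set $[0,T]\times\overline\Omega$. Lusin's theorem applied to the $\mathcal L^1\otimes\mu$-measurable map sending $(t,x)$ to an index at which $\psi_m(f(t,x))$ is attained produces a compact $K$ with $\mathcal L^1\otimes\mu$-negligible complement up to $\e$ on which this index is continuous, hence locally constant, so $K=\bigsqcup_{i=1}^m K_i$ with the $K_i$ compact and pairwise disjoint. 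Picking pairwise disjoint bounded open neighbourhoods $U_i\supset K_i$, a bounded open set $U_0$ disjoint from $K$ so that $\{U_i\}_{i=0}^m$ covers $[0,T]\times\overline\Omega$, and a smooth partition of unity $\{\lambda_i\}_{i=0}^m$ subordinate to it with $\lambda_i\equiv1$ on $K_i$, $\sum_i\lambda_i\equiv1$ on $[0,T]\times\overline\Omega$ and $\sum_i\lambda_i\le1$ everywhere, I would set (with $p_0$ any of the $p_i$)
\[
\varphi:=\sum_{i=0}^{m}\lambda_i\,p_i,\qquad \eta:=-\sum_{i=0}^{m}\lambda_i\,\psi^*(p_i).
\]
Using a partition of unity is the key device: since $\sum_i\lambda_i\le1$, $p_i\cdot w-\psi^*(p_i)\le\psi(w)$ and $\psi\ge0$, the constraint $\varphi(t,x)\cdot w+\eta(t,x)\le\psi(w)$ holds for all $(t,x)$ and all $w$, so $(\varphi,\eta)$ is admissible; on $K$ one has $\varphi\cdot f+\eta=\psi_m(f)$; and on the $\e$-exceptional set the integrand is bounded in modulus by $\psi(f)+C(1+|f|)$. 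This is exactly where hypothesis \eqref{psilip} is used: it yields $\psi(w)\le\psi(0)+C|w|^q$, and together with $|f(t,x)|\le\ell(t)(1+R)$ on $\overline\Omega\subset B(0,R)$ (valid since $f\in\mathcal F_\ell$) and $\ell\in L^q(0,T)$ it gives $\psi(f)+C(1+|f|)\in L^1(\mathcal L^1\otimes\mu)$; by absolute continuity of the Lebesgue integral the exceptional contribution is $o(1)$ as $\e\to0$. Letting $\e\to0$ and then $m\to\infty$ produces the representation formula announced above.

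With the representation in hand the passage to the limit is short. For a fixed admissible smooth pair $(\varphi,\eta)$, the pointwise bound $\varphi(t,x)\cdot f_k(t,x)+\eta(t,x)\le\psi(f_k(t,x))$ together with $\mu_k(t)\in\mathcal P_1(\R^n)$ gives
\[
\int_0^T\langle\psi(f_k(t,\cdot)),\mu_k(t)\rangle\,dt\ \ge\ \int_0^T\!\!\int_{\R^n}\big(\varphi(t,x)\cdot f_k(t,x)+\eta(t,x)\big)\,d\mu_k(t,x)\,dt.
\]
The $\varphi$-term on the right converges to $\int_0^T\int\varphi\cdot f\,d\mu\,dt$: the proofs of Theorem~\ref{thm:3} and Corollary~\ref{cor:2} carry over unchanged to a test field depending also on $t$, provided it is uniformly bounded, uniformly Lipschitz in $x$ and has uniform compact support — which our smooth $\varphi$ is — because then $\operatorname{Lip}(\varphi(t,\cdot)f_k(t,\cdot))\le C\ell(t)$, so the $\ell(t)$-weighted $\mathcal W_1$-estimate still closes with $\ell\in L^1(0,T)$, and $t\mapsto\varphi(t,\cdot)\mu(t)$ still belongs to $L^\infty((0,T),H^{-1,p'})$ with uniform compact support so that Corollary~\ref{cor:1} still applies. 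The $\eta$-term converges to $\int_0^T\int\eta\,d\mu\,dt$, since $\int\eta(t,\cdot)\,d\mu_k(t)\to\int\eta(t,\cdot)\,d\mu(t)$ for a.e.\ $t$ by narrow convergence ($\eta(t,\cdot)$ being bounded continuous) and $|\int\eta(t,\cdot)\,d\mu_k(t)|\le\|\eta\|_\infty$ allows dominated convergence in $t$. Hence $\liminf_k\int_0^T\langle\psi(f_k(t,\cdot)),\mu_k(t)\rangle\,dt\ge\int_0^T\int(\varphi\cdot f+\eta)\,d\mu\,dt$; taking the supremum over all admissible $(\varphi,\eta)$ and invoking the representation formula yields \eqref{semicont+}.

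The step I expect to be the main obstacle is the representation formula itself, that is, manufacturing \emph{smooth, compactly supported} test pairs that simultaneously satisfy the global Fenchel constraint everywhere, recover the value of the integral, and are regular enough in $(t,x)$ to be inserted into Theorem~\ref{thm:3}; the partition-of-unity construction is precisely what reconciles these requirements, convex combinations of Fenchel minorants remaining below $\psi$, and the matching of the growth of $\psi$ with the integrability of $\ell$ encoded in \eqref{psilip} is what supplies the $L^1$-domination controlling the error terms and the convergence of the $\eta$-term. Checking that Theorem~\ref{thm:3} and Corollary~\ref{cor:2} extend to time-dependent test fields is a minor, essentially routine, addendum.
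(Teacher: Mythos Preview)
Your argument is correct and takes a genuinely different route from the paper's proof. The paper proceeds in two decoupled steps: first it shows \eqref{vanish}, i.e.\ that $\int_0^T\langle\psi(f_k(t,\cdot)),\mu_k(t)-\mu(t)\rangle\,dt\to 0$, by exploiting \eqref{psilip} to bound $\operatorname{Lip}(\psi\circ f_k(t),\Omega)\le C'\ell(t)^q$ and then using the $\mathcal W_1$-convergence \eqref{convwass2} together with $\ell\in L^q$; second, with $\mu$ now fixed, it introduces the convex functional $S^\mu(g)=\int_0^T\langle\psi(g(t,\cdot)),\mu(t)\rangle\,dt$ (set to $+\infty$ outside the closed convex set $C_{\ell,\Omega}$) on the reflexive space $L^q((0,T),W^{1,p}(\Omega,\R^d))$, checks via \eqref{psilip} and Morrey's embedding that $S^\mu$ is strongly continuous on $C_{\ell,\Omega}$, and concludes weak lower semicontinuity by the standard ``convex $+$ strongly l.s.c.\ $\Rightarrow$ weakly l.s.c.'' principle. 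Your approach instead linearises the problem via Fenchel duality and a Lusin/partition-of-unity construction of smooth admissible pairs $(\varphi,\eta)$, reducing \eqref{semicont+} to the convergence of $\int\varphi\cdot f_k\,d\mu_k$ and $\int\eta\,d\mu_k$ and using (a mild time-dependent extension of) Theorem~\ref{thm:3} and Corollary~\ref{cor:2} for the former. The paper's route is shorter and stays entirely within the Banach-space framework already set up in Section~2, avoiding the need to revisit Theorem~\ref{thm:3}; your Reshetnyak-type argument is more constructive, makes the role of the pairing $f\mu$ explicit, and would generalise more readily to integrands $\psi$ depending also on $(t,x)$ or to settings where no ambient reflexive space is available.
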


\begin{remark}
For $\psi$ globally Lipschitz, as in the case of the cost functional \eqref{CSmod2}, we can simply take $q=1$.
\end{remark}

\begin{proof}
We first observe that \eqref{bound} and \eqref{convwass2} clearly imply that $\supp (\mu(t)) \Subset  \Omega$ for a.e.\ $t \in [0,T]$.
Our first goal is to show
\begin{equation}\label{semicont-}
\liminf_{k\to +\infty}\int_0^T \langle \psi(f_k(t,\cdot)), \mu(t) \rangle\,dt \ge \int_0^T \langle \psi(f(t,\cdot)), \mu(t) \rangle\,dt. 
\end{equation}
To prove this, we fix $p>n$ and consider $C_{\ell,\Om}$ as in \eqref{convesso}. \\We define the function $S^\mu \colon L^q((0,T), W^{1,p}(\Om, \R^d)) \to [0, +\infty]$ as
\begin{equation*}
S^\mu(g):=\begin{cases}
           &\displaystyle
           \int_0^T \langle \psi(g(t,\cdot)), \mu(t) \rangle\,dt\quad\hbox{if }g\in C_{\ell, \Om}\\
           &\displaystyle
           +\infty\quad\hbox{ otherwise}\,.
          \end{cases}
\end{equation*}
We want to prove that $S^\mu$ is lower semicontinuous with respect to the weak convergence of\\ $L^q((0,T), W^{1,p}(\Om, \R^d))$: with this, \eqref{semicont-} easily follows.
By convexity of $\psi$ and $C_{\ell, \Om}$, it is immediate to show that $S^\mu$ is convex. It then suffices to prove that it is lower semicontinuous in the strong topology of $L^q((0,T), W^{1,p}(\Om, \R^d))$ to obtain weak lower semicontinuity. To this end, take a sequence $g_k \in L^q((0,T), W^{1,p}(\Om, \R^d))$ strongly converging to $g$. The only relevant case is when $g_k \in C_{\ell, \Om}$, so that also $g \in C_{\ell, \Om}$. In such a case, we clearly have by \eqref{convesso} and \eqref{psilip} that there exists a constant $C'$ only depending on $C$ and the diameter of $\Om$ such that
$$
|\psi(g_k(t,x,v))-\psi(g(t,x,v))|\le (C'\ell(t))^{q-1} |g_k(t,x,v)-g(t,x,v)|
$$
for a.e.\ $t\in [0,T]$ and all $(x,v)\in \Om$. Denoting with $M$ the continuity constant of Morrey's embedding, we then get
\begin{eqnarray*}
&\displaystyle
|S^\mu(g_k)-S^\mu(g)|=\Big|\int_0^T \langle \psi(g_k(t,\cdot))-\psi(g(t,\cdot)), \mu(t) \rangle\,dt\Big|\\
&\displaystyle
\le \int_0^T \|\psi(g_k(t))-\psi(g(t))\|_{L^\infty(\Om)}\,dt \le C'^{q-1}\int_0^T \ell(t)^{q-1}\|g_k(t)-g(t)\|_{L^\infty(\Om,\R^d)}\,dt\\
&\displaystyle
\le MC'^{q-1}\int_0^T  \ell(t)^{q-1} \|g_k(t)-g(t)\|_{W^{1,p}(\Om,\R^d)}\,dt\,.
\end{eqnarray*}
Now, $\ell(t)^{q-1}$ belongs to $L^{q'}(0,T)$ by $q$-integrability of $\ell$, while $\|g_k(t)-g(t)\|_{W^{1,p}(\Om,\R^d)}$ is going to $0$ as $k\to +\infty$ in $L^q(0,T)$. Therefore, H\"older inequality implies
$$
|S^\mu(g_k)-S^\mu(g)|\to 0\,,
$$
thus \eqref{semicont-} is proved.

We claim now that
\begin{equation}\label{vanish}
\lim_{k\to +\infty}\int_0^T \langle \psi(f_k(t,\cdot)), \mu_k(t)-\mu(t) \rangle\,dt=0\,.
\end{equation}
Indeed, since $f_k \in \mathcal F_\ell$ and using \eqref{psilip}, we have that there exists a constant $C'$ only depending on $C$ and on the diameter of $\Om$ such that
$$
\operatorname{Lip}(\psi \circ f_k(t), \Om)\le C'^{q-1}\ell(t)^q
$$
for all $k\in \N$ and a.e.\ $t\in (0,T)$, where $\psi \circ f_k(t)$ is the composition of the functions $\psi$ and $f_k(t)$. Therefore
\begin{eqnarray*}
&\displaystyle
\Big|\int_0^T \langle \psi(f_k(t,\cdot)), \mu_k(t)-\mu(t) \rangle\,dt\Big|=\Big|\int_0^T \langle \psi(f_k(t,\cdot))-\psi(f_k(t,0)), \mu_k(t)-\mu(t) \rangle\,dt\Big|\\
&\displaystyle
\le C'^{q-1}\int_0^T \ell(t)^q \mathcal W_1(\mu_k(t), \mu(t)\,dt\,.
\end{eqnarray*}
By \eqref{convwass2}, this latter integrand is pointwise vanishing. Since clearly condition \eqref{bound} implies \eqref{unifmom}, by \eqref{boundw1} $\mathcal W_1(\mu_k(t), \mu(t)$ is bounded uniformly with respect to $t$ and $k$; since $\ell \in L^q(0,T)$, we can then use the dominated convergence theorem 
$$
\int_0^T \ell(t)^q \mathcal W_1(\mu_k(t), \mu(t)\,dt\to 0\,.
$$
With this, \eqref{vanish} follows: combining it with \eqref{semicont-}, we get \eqref{semicont+}.
\end{proof}

As a concluding remark for this section, we point out that, although we consider here controls in the class $\mathcal F_\ell$ depending on both the variables $x$ and $v$, our analysis apply without any change to controls that depend only on some specific variables. This could be justified by some modeling reasons: for instance, it would be consistent with the previous work \cite{CFPT} to take controls which depend exclusively by the velocity state.
Indeed, any subclass of $\mathcal F$ consisting of controls only depending on some specific variables is closed with respect to the convergence in \eqref{locweakconv}, as we clarify in the following remark.

\begin{remark}
For $1\le d<n$ we write the generic point $z$ of $\Rn$ as $z=(u,w)$,
$u \in \R^{n-d}$, $w\in \mathbb R^d$. Given $\mathcal F_\ell$ as in Definition \ref{def:admcontr}, we also introduce the following subclass of $\mathcal F_\ell$ of admissible controls given by
\begin{equation}\label{def2}
\mathcal F^w_\ell := \{ f(t,z) \in \mathcal F_\ell: f(t,z) = f(t,w) \}
\end{equation}
Trivially, $f \in \mathcal F_\ell^w$ if and only if $|f(t,0)| + \|\nabla_w f(t,\cdot)\|_{L^\infty(\mathbb R^d)} \leq \ell(t)$ for almost
every $t \in [0,T]$. We can show that $\mathcal F_\ell^w$ is closed  with respect to the weak convergence in Corollary \ref{cor:1}. Indeed, if $(f_j)_{j \in \mathbb N} \in \mathcal F_\ell^w$ is a sequence, then $(f_j)_{j \in \mathbb N}$ is also a sequence in $\mathcal F_\ell$. Assume now that $f_j$ is converging to $f \in \mathcal F_\ell$ in the sense of \eqref{locweakconv}. Let $\{u_k, k \in \mathbb N \}$ and $\{w_k, k \in \mathbb N \}$ be two countable dense subsets of $\R^{n-d}$ and $\R^d$, respectively. Pick two different $u_{k_1}$, $u_{k_2}$ in the first countable subset, and one fixed $w_{k_3}$ in the second one. Fix $t_1$ and $t_2$ in $[0, T]$ and set 
$$
\psi(t):=\chi_{(t_1, t_2)}(\delta_{u_{k_1}}-\delta_{u_{k_2}})\otimes \delta_{w_{k_3}}\,.
$$ 
Since $f_j \in \mathcal F_\ell^w$ we have for a.e.\ $t\in (t_1, t_2)$ that 
$$
\langle f_j(t), \psi(t)\rangle= f_j(t, u_{k_1}, w_{k_3})-f_j(t, u_{k_2}, w_{k_3})=f_j(t, w_{k_3})-f_j(t,w_{k_3})=0,
$$ 
so that \eqref{locweakconv} specifies easily to 
$$
0=\int_{t_1}^{t_2}[f(t,u_{k_1},w_{k_3}) - f(t,u_{k_2},w_{k_3})]\,dt
$$
By the Lebesgue theorem, we can find a set $N \subset [0,T]$ of zero Lebesgue measure and independent of the elements $u_{k_1}$, $u_{k_2}$, and $w_{k_3}$ such that
$$
0=f(t,u_{k_1},w_{k_3}) - f(t,u_{k_2},w_{k_3}),
$$
for all $t \in [0,T]\setminus N$. Since $f(t,\cdot, \cdot)$ is continuous for almost every $t$, by a density argument we infer
$$
f(t, u_1, w)-f(t, u_2, w)=0
$$
for all $t \in [0,T]\setminus N$, $u_1$ and  $u_2\in \R^{n-d}$, and $w$ in $\R^d$. This is exactly saying that $f\in \mathcal F_\ell^w$.
\end{remark}

\section{The Finite Dimensional Control Problem}\label{sec:finite}

In the following we consider problems in the phase space $\mathbb R^n$ where $n=2d$ with state variables $z=(x,v)$, $x$, $v\in \R^d$.
We state the following assumptions:
\begin{itemize}
\item[(H)] Let $H\colon \R^{2d} \to \R^{d}$ be a locally Lipschitz function such that
\begin{equation}\label{lingrowth}
|H(z)|\le C(1+|z|), \quad \mbox{for all } z \in \mathbb R^{2d};
\end{equation}
\item[(L)]  Let $L: \mathbb R^{2d} \times \mathcal P_1 (\mathbb R^{2d})\to \mathbb R_+$
be a continuous function in the state variables $(x,v)$ and such that if $(\mu_j)_{j \in \mathbb N} \subset \mathcal P_1 (\mathbb R^{2d})$ is a sequence converging narrowly to $\mu$ in  $\mathcal P_1 (\mathbb R^{2d})$, then $L(x,v,\mu_j) \to  L(x,v,\mu)$ uniformly with respect to $(x,v)$  on compact sets of $\mathbb R^{2d}$;
\item[($\Psi$)] Let $\psi: \R^d \to [0,+\infty)$ be a nonnegative convex function satisfying the following assumption: there exist $C\ge 0$ and $1\le q<+\infty$ such that
\begin{equation}\label{psilip2}
\operatorname{Lip}(\psi, B(0, R))\le C R^{q-1}
\end{equation}
for all $R>0$.
\end{itemize}
These assumptions are useful in this section, but we shall recall them also in Section \ref{sec:meanfield}, where they play again a crucial role.

We fix $q$ so that \eqref{psilip2} holds, and a function $\ell \in L^q(0,T)$. Given $N\in \mathbb N$ and an initial datum $(x_1(0), \dots, x_N(0), v_1(0), \dots, v_N(0)) \in (\mathbb R^d)^N \times (\mathbb R^d)^N$, 
we consider the following optimal control problem:
\begin{equation}\label{mainmod3}
\min_{f \in \mathcal F_\ell} \int_{0}^T \int_{\mathbb R^{2d}} \left [ L(x,v,\mu_N(t,x,v)) + \psi(f(t,x,v) \right ] d \mu_N(t,x,v) dt,
\end{equation}
where 
$$
\mu_N(t,x,v) = \frac{1}{N} \sum_{j=1}^N \delta_{(x_i(t),v_i(t))}(x,v),
$$
is the time dependent empirical atomic measure supported on the phase space trajectories $(x_i(t),v_i(t)) \in \mathbb R^{2 d}$, for $i=1,\dots N$, constrained by being the solution 
of the system
\begin{equation}\label{mainmod4}
\left \{
\begin{array}{ll}
\dot x_i = v_i, & \\
\dot v_i = ( H \star \mu_N)(x_i,v_i) + f(t,x_i, v_i), & i=1,\dots N, \quad t \in [0,T],
\end{array}
\right.
\end{equation}
 The symbol $\star$ indicates the convolution operator of a function with respect to a measure.
Let us stress that the existence of  Carath{\'e}odory solutions of \eqref{mainmod4} for any given $f \in \mathcal F_\ell$ is actually ensured by Theorem \ref{cara2} and Theorem \ref{cara-global} recalled
in the Appendix.
We start with a trajectory confinement result.

\begin{lemma}\label{lem:conf}
Let $f \in \mathcal F_\ell$ and $(x(t),v(t))$ be the solution of \eqref{mainmod4} with initial datum $(x(0),v(0))$. Then
\begin{eqnarray}\label{confeq}
\mathcal V(t) \leq \left \{ \mathcal V(0) + [1+ \mathcal X(0)]\left (2CT+\int_0^t \ell(s) ds \right)\right \} e^{(1+T) \int_0^t [2C + \ell(s)] ds},
\end{eqnarray}
for all $t \in [0,T]$, where  $\mathcal V(t) = \max_{i=1,\dots, N} |v_i(t)|$ and $\mathcal X(t)= \max_{i=1,\dots,N} | x_i(t)|$. Hence the trajectory $(x(t),v(t))$ is uniformly bounded with respect to $t \in [0,T]$, independently of the number $N$ of particles.
\end{lemma}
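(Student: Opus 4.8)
The plan is to establish a priori differential inequalities for $\mathcal X(t)$ and $\mathcal V(t)$ and close them by a Gronwall argument. Since $(x(t),v(t))$ is a Carath\'eodory solution of \eqref{mainmod4}, each $x_i,v_i$ is absolutely continuous on $[0,T]$, hence so are $\mathcal X$ and $\mathcal V$, being finite maxima of absolutely continuous functions; at a.e.\ $t$ one has $\frac{d}{dt}\mathcal X(t)\le\max_i|\dot x_i(t)|$ and $\frac{d}{dt}\mathcal V(t)\le\max_i|\dot v_i(t)|$. From the first equation $\dot x_i=v_i$ we get $|\dot x_i|\le\mathcal V(t)$, and therefore
$$\mathcal X(t)\le\mathcal X(0)+\int_0^t\mathcal V(s)\,ds\le\mathcal X(0)+T\sup_{s\in[0,t]}\mathcal V(s).$$

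Next I would bound $\dot v_i$. Writing $(H\star\mu_N)(x_i,v_i)=\frac1N\sum_{j=1}^N H(x_i-x_j,v_i-v_j)$, assumption (H) gives $|H(x_i-x_j,v_i-v_j)|\le C(1+|x_i-x_j|+|v_i-v_j|)\le C(1+2\mathcal X(t)+2\mathcal V(t))$, a bound independent of $i,j$, and after the $\frac1N$-averaging independent of $N$. The admissibility condition (iii) for $f\in\mathcal F_\ell$ gives $|f(t,x_i,v_i)|\le|f(t,0)|+\operatorname{Lip}(f(t,\cdot),\mathbb R^d)\,|(x_i,v_i)|\le\ell(t)(1+\mathcal X(t)+\mathcal V(t))$. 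Adding the two estimates and using $C\le 2C$, we obtain for a.e.\ $t$ and all $i$
$$|\dot v_i(t)|\le(2C+\ell(t))\bigl(1+\mathcal X(t)+\mathcal V(t)\bigr),\qquad\text{hence}\qquad\mathcal V(t)\le\mathcal V(0)+\int_0^t(2C+\ell(s))\bigl(1+\mathcal X(s)+\mathcal V(s)\bigr)\,ds.$$

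Now I would substitute the estimate for $\mathcal X$ into this inequality. Setting $V^*_t:=\sup_{s\in[0,t]}\mathcal V(s)$, for $s\le t$ one has $1+\mathcal X(s)+\mathcal V(s)\le 1+\mathcal X(0)+(1+T)V^*_s$; separating the constant term and using $\int_0^t(2C+\ell(s))\,ds\le 2CT+\int_0^t\ell(s)\,ds$ yields
$$\mathcal V(t)\le\mathcal V(0)+[1+\mathcal X(0)]\Bigl(2CT+\int_0^t\ell(s)\,ds\Bigr)+(1+T)\int_0^t(2C+\ell(s))\,V^*_s\,ds.$$
The right-hand side being nondecreasing in $t$, the same inequality holds with $\mathcal V(t)$ replaced by $V^*_t$, and Gronwall's lemma gives exactly \eqref{confeq}. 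Since $\ell\in L^q(0,T)\subset L^1(0,T)$ the exponential factor is finite, and $\mathcal V(t)\le V^*_t$ together with $\mathcal X(t)\le\mathcal X(0)+TV^*_t$ gives the uniform bound on the whole trajectory, with constants depending only on $T$, $C$, $\|\ell\|_{L^1(0,T)}$, $\mathcal X(0)$, $\mathcal V(0)$, and in particular not on $N$.

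I do not anticipate a genuine obstacle here: this is a routine a priori estimate. The only points requiring mild care are the justification of the differentiation of the maxima $\mathcal X,\mathcal V$ and the passage to the $\sup$-version of Gronwall's inequality; the conceptually important feature — which is what makes the estimate useful for the mean-field limit of Section \ref{sec:meanfield} — is precisely that the $\frac1N$-averaging in $H\star\mu_N$ forces every constant to be independent of the number of agents.
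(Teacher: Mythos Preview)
Your proof is correct and follows essentially the same route as the paper: bound $|\dot v_i|$ using the linear growth of $H$ and the admissibility condition on $f$, substitute the integral bound on $\mathcal X$ in terms of $\mathcal V$, and apply Gronwall. The paper writes the final Gronwall inequality directly with $\mathcal V(s)$ in the integrand, whereas you (more carefully) pass through the running supremum $V^*_t$ before invoking Gronwall; this extra step makes the argument slightly cleaner, since the substitution $\mathcal X(s)\le \mathcal X(0)+T\mathcal V(s)$ is only literally true after replacing $\mathcal V$ by its running maximum.
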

\begin{proof}
By integration of \eqref{mainmod4} and the linear growths of both $H$ and control function $f \in \mathcal F_\ell$, we obtain
\begin{equation}\label{eqfirst}
\mathcal V(t) \leq \mathcal V(0) + \int_0^t [2C(1+\mathcal X(s) + \mathcal V(s)) + \ell(s)(1+ \mathcal X(s)+ \mathcal V(s))] ds.
\end{equation}
Moreover, for any $0\leq s \leq t$
$$
|x_i(s)| \leq |x_i(0)| + \int_0^s |v_i(r)| dr \leq |x_i(0)| + \int_0^t |v_i(r)| dr,
$$
which combined with \eqref{eqfirst} gives
$$
\mathcal V(t) \leq \left \{ \mathcal V(0) + [1+ \mathcal X(0)] \left (2CT+\int_0^t \ell(s) ds \right) \right \}+ (1+T)\int_0^t [2C + \ell(s)] \mathcal V(s) ds.
$$
Gronwall's lemma eventually yields \eqref{confeq}.
\end{proof}
From the uniform support bound on $\mu_N$ provided by Lemma \ref{lem:conf} and from Theorems \ref{thm:3} and \ref{thm:4+} we deduce the following well-posedness result.

\begin{theorem}\label{thm:4}
The finite horizon optimal control problem \eqref{mainmod3}-\eqref{mainmod4} with initial datum $(x(0),v(0))$ has solutions.
\end{theorem}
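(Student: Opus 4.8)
The plan is to run the \emph{direct method of the calculus of variations} in the space $\mathcal F_\ell$. First I would note that $\mathcal E_\psi^N$ is finite at the admissible control $f\equiv 0$: by Lemma~\ref{lem:conf} the corresponding trajectory stays in a fixed compact set of $\R^{2d}$, on which $L$ is bounded (assumption (L)), while $\psi(0)<+\infty$; hence $m:=\inf_{f\in\mathcal F_\ell}\mathcal E_\psi^N(f)$ is a finite, nonnegative number. I would then fix a minimizing sequence $(f_k)_{k\in\mathbb N}\subset\mathcal F_\ell$ and let $z_k(t)=(x_k(t),v_k(t))$ be the Carath\'eodory solution of \eqref{mainmod4} with the prescribed initial datum (existence guaranteed by the results recalled in the Appendix), with associated empirical measures $\mu_N^k(t)=\frac1N\sum_{j}\delta_{(x_k^j(t),v_k^j(t))}$.

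Next comes the compactness step. By Lemma~\ref{lem:conf} the trajectories $z_k$ are bounded on $[0,T]$ uniformly in $k$ (and in $N$), so we may fix a ball $\Omega\Subset\R^{2d}$ containing $\supp\mu_N^k(t)$ for all $k$ and $t$. Using \eqref{mainmod4}, (H) and $f_k\in\mathcal F_\ell$ one gets $|\dot x_k^i|\le C_1$ and $|\dot v_k^i(t)|\le C_2\,(C+\ell(t))$ with $C_1,C_2$ independent of $k$; since $\ell\in L^q(0,T)\subset L^1(0,T)$, the $x_k^i$ are equi-Lipschitz and the $v_k^i$ are equicontinuous with a common modulus. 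By Ascoli--Arzel\`a, along a subsequence (not relabeled) $z_k\to z=(x,v)$ uniformly on $[0,T]$, whence $\mathcal W_1(\mu_N^k(t),\mu_N(t))\to 0$ for every $t$, where $\mu_N(t)=\frac1N\sum_j\delta_{(x^j(t),v^j(t))}$. Simultaneously, by Corollary~\ref{cor:1} we may assume $f_k\to f$ in the sense of \eqref{locweakconv} with $f\in\mathcal F_\ell$.

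Then I would show that $(x,v)$ is a solution of \eqref{mainmod4} with control $f$ by passing to the limit in the integral identity
$$
v_k^i(t)=v_i(0)+\int_0^t\big[(H\star\mu_N^k)(x_k^i(s),v_k^i(s))+f_k(s,x_k^i(s),v_k^i(s))\big]\,ds .
$$
The interaction term converges because all trajectories converge uniformly inside the compact set $\Omega$ and $H$ is continuous. For the control term I would invoke Corollary~\ref{cor:2} with the one-atom measures $\nu_k^i(t):=\delta_{(x_k^i(t),v_k^i(t))}$, which satisfy \eqref{unifmom}, \eqref{bound} and $\mathcal W_1(\nu_k^i(t),\nu^i(t))\to 0$, tested against a field $\varphi\in C_c^0(\R^{2d},\R^{2d})$ equal to the coordinate vector $e_\alpha$ on a neighborhood of $\Omega$: this gives $\int_0^{t}(f_k(s,x_k^i(s),v_k^i(s)))_\alpha\,ds\to\int_0^{t}(f(s,x^i(s),v^i(s)))_\alpha\,ds$ for every coordinate $\alpha$ and every $i$. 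Together with $x_k^i(t)=x_i(0)+\int_0^t v_k^i(s)\,ds$, this shows that $(x,v)$ is a Carath\'eodory solution of \eqref{mainmod4} driven by the admissible control $f\in\mathcal F_\ell$. This identification is the main obstacle: since both the control and the evaluation point $(x_k^i(\cdot),v_k^i(\cdot))$ move, one cannot naively pair the weak convergence of $f_k$ with the strong convergence of the trajectories, and the point of testing the atomic measures $\delta_{(x_k^i,v_k^i)}$ via Corollary~\ref{cor:2} is exactly to circumvent this.

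Finally, I would establish lower semicontinuity of the cost. Splitting $\mathcal E_\psi^N(f_k)$ into the $L$-term and the $\psi$-term: Theorem~\ref{thm:4+} applies directly to the latter (assumption ($\Psi$)/\eqref{psilip2} is precisely \eqref{psilip}, the supports are uniformly contained in $\Omega$, and $\mathcal W_1(\mu_N^k(t),\mu_N(t))\to0$), yielding $\liminf_k\int_0^T\langle\psi(f_k(t,\cdot)),\mu_N^k(t)\rangle\,dt\ge\int_0^T\langle\psi(f(t,\cdot)),\mu_N(t)\rangle\,dt$. For the $L$-term, assumption (L) combined with $\mu_N^k(t)\to\mu_N(t)$ narrowly (all supported in $\Omega$) and $(x_k^j(t),v_k^j(t))\to(x^j(t),v^j(t))$ gives $L(x_k^j(t),v_k^j(t),\mu_N^k(t))\to L(x^j(t),v^j(t),\mu_N(t))$ for every $t$, with a uniform bound since $L$ is continuous on the sequentially compact set $\overline\Omega\times\overline{\{\mu_N^k(t):k,t\}}$; dominated convergence then gives actual convergence of this term. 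Adding the two contributions, $\mathcal E_\psi^N(f)\le\liminf_k\mathcal E_\psi^N(f_k)=m$, so $f$ is a minimizer, which proves the theorem.
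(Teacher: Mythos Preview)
Your proof is correct and follows essentially the same route as the paper: direct method on a minimizing sequence, confinement via Lemma~\ref{lem:conf}, Ascoli--Arzel\`a on the trajectories, weak compactness of the controls via Corollary~\ref{cor:1}, identification of the limit equation by testing the Dirac measures $\delta_{(x_k^i,v_k^i)}$ against a cutoff $\varphi$ that equals a constant vector on $\Omega$ (the paper uses $\varphi_\xi\in C^1_c$ and Theorem~\ref{thm:3}, you use $\varphi\in C^0_c$ and Corollary~\ref{cor:2}, which is an immaterial difference), and finally Theorem~\ref{thm:4+} for the $\psi$-term together with assumption~(L) and dominated convergence for the $L$-term. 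The only slip is typographical: your test field should take values in $\R^d$, not $\R^{2d}$.
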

\begin{proof}
Let us consider a minimizing sequence $(f_j)_{j \in \mathbb N}$ in $\mathcal F_\ell$ and its subsequence
$(f_{j_k})_{k \in \mathbb N}$ and $f \in \mathcal F_\ell$ as in Corollary \ref{cor:1}. For simplicity we rename $f_k = f_{j_k}$.
Let us also fix $(x^k(t),v^k(t))$ the trajectory solutions to \eqref{mainmod4} corresponding to $f_k$. 

As we have from \eqref{mainmod4} that
$$
\max_{i=1,\dots ,N} |\dot v_i^k(t)| \leq 2 C(1+ \max_{i=1,\dots, N} |x_i^k(t)| +  \max_{i=1,\dots, N} |v_i^k(t)|) + \ell(t)(1+ \max_{i=1\dots N} |v_i^k(t)|),
$$
 Lemma \ref{lem:conf} implies the equi-integrability of $\dot v_i^k(t)$, the equi-boundedness and the equi-absolute continuity of $v_i^k(t)$, hence the equi-Lispchitzianity of $x_i^k(t)$ as well, uniformly with 
respect to $k$, for all $i=1,\dots,N$. By Ascoli-Arzel\`a theorem there exist a subsequence, again renamed $(x^k(t),v^k(t))_{k \in \mathbb N}$
and an absolutely continuous trajectory $(x(t),v(t))$ in $[0,T]$ such that, for $k \to \infty$ 
\begin{equation}
\left \{
\begin{array}{ll}
x_i^k \rightrightarrows x_i, & \mbox{ in } [0,T], \quad \mbox{ for all } i=1,\dots,N,\\
v_i^k \rightrightarrows v_i, & \mbox{ in } [0,T], \quad \mbox{ for all } i=1,\dots,N,\\
\dot x_i^k \rightrightarrows \dot x_i, & \mbox{ in } [0,T], \quad \mbox{ for all } i=1,\dots,N,\\
\dot v_i^k \rightharpoonup \dot v_i, & \mbox{ in } L^q(0,T), \quad \mbox{ for all } i=1,\dots,N.\\
\end{array}
\right .
\end{equation}
Let us remark that the $\dot v_i^k \rightharpoonup \dot v_i$ in $L^q(0,T)$ can be seen again as a consequence of the more general Theorem \ref{thm:0}.
In particular $\dot x_i(t) = v_i(t)$ in $[0,T]$, for all $i=1,\dots,N$. 
Let us denote
$$
\mu_N^k = \frac{1}{N} \sum_{i=1}^N \delta_{(x_i^k(t),v_i^k(t))} \quad \mbox{and }  \mu_N = \frac{1}{N} \sum_{i=1}^N \delta_{(x_i(t),v_i(t))}.
$$
As a consequence of Lemma \ref{lem:conf} and of the uniform convergence of the trajectories we have that for every $i=1,\dots,N$
\begin{equation}\label{Wasser}
\mathcal W_1 (\delta_{(x_i^k(t),v_i^k(t))}, \delta_{(x_i(t),v_i(t))})\to 0\quad\hbox{and}\quad \mathcal W_1 (\mu_N^k(t), \mu_N(t))\to 0
\end{equation}
as $k\to +\infty$, uniformly in $t\in [0,T]$. Furthermore there exists a relatively compact open subset $\Om \subset \R^{2d}$ such that
\begin{equation}\label{confin}
\supp (\mu_N^k(t))\, \cup \,\supp (\mu_N(t)) \Subset  \Omega,
\end{equation}
for all $t \in [0,T]$ and $k\in \N$. By the linear growth of $H$ we deduce
\begin{equation}\label{potr}
H \star \mu_N^k(x_i^k,v_i^k) \rightrightarrows  H \star \mu_N(x_i,v_i), \mbox{ in } [0,T], \quad \mbox{ for } k \to \infty,
\end{equation}
see also Lemma \ref{secstim} in the Appendix. 

To prove that $(x(t),v(t))$ is actually the Carath{\'e}odory solution of \eqref{mainmod4} associated to 
the forcing term $f$, we therefore have only to show that for all $i=1,\dots,N$ one has
\begin{equation}\label{claim}
\dot v_i = ( H \star \mu_N)(x_i,v_i) + f(t,x_i, v_i).
\end{equation}
This is clearly equivalent to the following: for every $\xi \in \R^d$ and every $\hat t\in [0,T]$ it holds:
\begin{equation}\label{claim2}
\xi\cdot \int_0^{\hat t}\dot v_i(t)\,dt = \xi\cdot \int_0^{\hat t}[( H \star \mu_N)(x_i(t),v_i(t))+ f(t,x_i(t), v_i(t))]\,dt.
\end{equation}
In view of the weak $L^q$ convergence of $\dot v_i^k$ to $\dot v_i$ and of \eqref{potr}, \eqref{claim2} actually reduces to show
$$
\lim_{k\to +\infty}\xi\cdot \int_0^{\hat t}f(t,x_i^k(t), v_i^k(t))\,dt=\xi\cdot \int_0^{\hat t}f(t,x_i(t), v_i(t))\,dt\,.
$$
Given $\varphi_{\xi} \in C^1_c(\R^{2d}, \R^d)$ such that $\varphi_{\xi}\equiv \xi$ in $\Om$, the above equality reduces to
$$
\lim_{k\to +\infty}\int_0^{\hat t}\langle \varphi_{\xi},f(t,\cdot)\,\delta_{(x_i^k(t),v_i^k(t))}\rangle\,dt=\int_0^{\hat t}\langle \varphi_{\xi},f(t,\cdot)\,\delta_{(x_i(t),v_i(t))}\rangle\,dt
$$
which follows from \eqref{compmu}, \eqref{Wasser}, and \eqref{confin}.

Using \eqref{Wasser} and \eqref{confin}, the inequality
\begin{equation}\label{lowersemi}
\liminf_{k\to +\infty}\int_{0}^T \int_{\mathbb R^{2d}} \psi(f_k(t,x,v))\, d \mu_N^k(t,x,v) dt \ge \int_{0}^T \int_{\mathbb R^{2d}} \psi(f(t,x,v))\, d \mu_N(t,x,v) dt
\end{equation}
follows now directly from \eqref{semicont+}. Furthermore, condition (L) yields
\begin{eqnarray}
\lim_k \int_{0}^T \int_{\mathbb R^{2d}} L(x,v,\mu_N^k) d\mu_N^k(t,x,v) dt &=& \lim_k \int_{0}^T \frac{1}{N} \sum_{i=1}^N  L(x_i^k(t),v_i^k(t),\mu_N^k(t)) dt \nonumber\\
&=& \int_{0}^T \frac{1}{N} \sum_{i=1}^N  L(x_i(t),v_i(t),\mu_N(t)) dt \nonumber \\
&=& \int_{0}^T \int_{\mathbb R^{2d}} L(x,v,\mu_N) d\mu_N(t,x,v) dt. \label{contin}
\end{eqnarray}
Combining \eqref{lowersemi} and \eqref{contin} gives the optimality of $f$ as a minimizer of \eqref{mainmod3} under the solution constraint \eqref{mainmod4}.
\end{proof}

\section{Mean-Field Solutions}\label{sec:meanfield}

In this section we are concerned with the limit for $N \to \infty$ of the solutions of the ODE system \eqref{mainmod} to solutions of the PDE of the type \eqref{PDEmod}. First we need to define a proper concept of
solution for \eqref{PDEmod}. To this aim, assuming the $L^1$ integrability of the function $\ell$ defining the class $\mathcal F_\ell$ is sufficient.

\begin{definition}\label{defz}
Let $\ell \in L^1(0,T)$. Fix a function $f$ belonging to the class ${\mathcal F}_\ell$. Given a locally Lipschitz function $H\colon \R^{2d} \to \R^{d}$ satisfying \eqref{lingrowth}, we say that a map $\mu\colon[0,T]\to \PP(\R^{2d})$ continuous with respect to $\WW$ is a weak equi-compactly supported solution of the equation
\begin{equation}\label{PDEmodXX}
\frac{\partial \mu}{\partial t} + v \cdot \nabla_x \mu = \nabla_v \cdot \left [ \left ( H \star \mu + f \right ) \mu \right ],
\end{equation}
with forcing term $f$ on the interval $[0, T]$ if there exists $R>0$ such that
\begin{equation}\label{hypo2++}
{\rm supp }\,\mu(t)\subset B(0,R),
\end{equation}
for every $t \in [0, T]$, and, defining $w_{H, \mu, f}(t,x,v) \colon [0, T]\times \R^d \times \R^d \to \R^d\times \R^d$ as
$$
w_{H, \mu, f}(t,x,v):=(v, H\star \mu(t)(x,v)+ f(t,x,v))
$$
one has
\begin{equation}\label{solution}
\frac{d}{dt}\int_{\R^{2d}}\zeta(x,v)\,d\mu(t)(x,v)= \int_{\R^{2d}}\nabla \zeta(x,v)\cdot w_{H, \mu, f}(t,x,v)\,d\mu(t)(x,v)
\end{equation}
for every $\zeta \in C^\infty_c(\R^d \times \R^d)$, in the sense of distributions.
\end{definition}

Since the linear span of functions of the type $\eta(t) \zeta(x,v)$, with $\eta \in C^\infty_c(0,T)$ and $\zeta \in C^\infty_c(\R^d \times \R^d)$ is dense in $C^1_c((0, T)\times \R^d \times \R^d)$ it is not difficult to see that actually, \eqref{solution} is equivalent to saying that
$$
\int_0^T\int_{\R^{2d}}\big(\partial_t \varphi (t,x,v)+\nabla_{x,v} \varphi(t,x,v)\cdot w_{H, \mu, f}(t,x,v)\big)\,d\mu(t)(x,v)\,dt=0,
$$
for every $\varphi \in C^1_c((0, T)\times \R^d \times \R^d)$.
Using also the explicit expression of $w_{H, \mu, f}$ we can equivalently reformulate \eqref{solution} by asking that the equality
\begin{equation}\label{solution2}
\begin{array}{c}
\displaystyle
\int_{\R^{2d}}\zeta(x,v)\,d\mu(t)(x,v)-\int_{\R^{2d}}\zeta(x,v)\,d\mu(0)(x,v)= \\[5pt]
\displaystyle
\int_{\R^{2d}}\big(\nabla_x \zeta(x,v)\cdot v+\nabla_v\zeta(x,v)\cdot H\star \mu(t)(x,v)+ \nabla_v\zeta(x,v)\cdot f(t,x,v)\big)\,d\mu(t)(x,v),
\end{array}
\end{equation}
holds for every $t \in [0, T]$ and every $\zeta \in C^\infty_c(\R^d \times \R^d)$, in the sense of distributions. Finally, we can also consider test functions depending also on $t$, and defining solutions as satisfying the equality
\begin{eqnarray}
0&=&\!\!\int_0^T\!\!\int_{\R^{2d}}\big(\partial_t \varphi (t,x,v)+\nabla_x \varphi (t,x,v)\cdot v+\nabla_v \varphi (t,x,v)\cdot H\star \mu(t)(x,v) \\ \nonumber
&& \phantom{XXXXXXXXXXXXXXXXXXXXXx}+ \nabla_v \varphi (t,x,v)\cdot f(t,x,v)\big)\,d\mu(t)(x,v),\label{solution3}
\end{eqnarray}
for every $\varphi \in C^1_c((0, T)\times \R^d \times \R^d)$: again, this is equivalent to \eqref{solution}.

\begin{remark}
Observe that $w_{H, \mu, f}(t,x,v)$ is a Carath{\'e}odory vector field, thus measurable with respect to any product measure of the type $\Lone \times \mu$ with $\Lone$ the Lebesgue measure on $[0, T]$ and $\mu$ a Borel probability measure on $\R^d\times \R^d$. Furthermore, by \eqref{lingrowth} and \eqref{hypo2++}, we can show that \eqref{trzu} in the Appendix holds for $n=2d$ and $p=d$. Taking also into account \eqref{lipt}  of Lemma \ref{stimesceme} for $n=2d$ and $p=d$ and \eqref{terr} in the Appendix, it follows that the $W^{1, \infty}$ norm of $w_{H, \mu, f}(t,\cdot,\cdot)$ is bounded by an $L^1$-function of $t$ on any compact subset of $\R^d \times \R^d$. Jointly with the aforementioned measurability property, this allows us to repeat the arguments in \cite[Section 8.1]{AGS} proving that $\mu(t)$ is a weak equi-compactly supported solution of \eqref{PDEmodXX} with forcing term $f$ on the interval $[0, T]$ if and only if it satisfies \eqref{hypo2++} and the measure-theoretical fixed point equation
\begin{equation}\label{pushf}
\mu(t)=({\mathcal T}^\mu_t)_\sharp\mu_0,
\end{equation}
with $\mu_0:=\mu(0)$ and ${\mathcal T}^\mu_t$ is the flow function defined by \eqref{definitflow} in the Appendix. Here $({\mathcal T}^\mu_t)_\sharp$ denotes the push-forward of $\mu_0$ through ${\mathcal T}^\mu_t$.
\end{remark}

\subsection{Empirical equi-compactly supported solutions}

Let us now again fix $\ell \in L^1(0,T)$ and a locally Lipschitz function $H\colon \R^{2d} \to \R^{d}$ satisfying \eqref{lingrowth}. Given $N\in \mathbb N$, an initial datum $(x_1^0, \dots, x_N^0, v_1^0, \dots, v_N^0) \in B(0,R_0)\subset (\mathbb R^d)^N \times (\mathbb R^d)^N$, with $R_0>0$ independent of $N$,
and a function $f_N \in \mathcal F_\ell$, perhaps also depending on $N$, we consider the following time dependent empirical atomic measure
\begin{equation}\label{empsolN}
\mu_N(t,x,v) = \frac{1}{N} \sum_{j=1}^N \delta_{(x_i(t),v_i(t))}(x,v),
\end{equation}
supported on the phase space trajectories $(x_i(t),v_i(t)) \in \mathbb R^{2 d}$, for $i=1,\dots N$, defining the solution 
of the system
\begin{equation}\label{Nmodel}
\left \{
\begin{array}{ll}
\dot x_i = v_i, & \\
\dot v_i = ( H \star \mu_N)(x_i,v_i) + f_N(t,x_i,v_i), & i=1,\dots N, \quad t \in [0,T],
\end{array}
\right.
\end{equation}
with initial datum $(x(0),v(0))=(x^0,v^0)$.
We remark again that under our assumptions on $f$ and $H$, existence and uniqueness of Carath{\'e}odory solutions of \eqref{Nmodel} are ensured (see Appendix for details) hence the well-posedness of \eqref{empsolN}.
\begin{lemma}\label{empsol}
Under the assumptions considered since the beginning of this section, let us define $\mu_N$ as in \eqref{empsolN}. The the following properties hold:
\begin{itemize}
\item[(a)] $\supp(\mu_N(t)) \subset B(0,R_T)$, where $R_T>0$ is independent of $N$;
\item[(b)] $\mathcal W_1(\mu_N(t_1), \mu_N(t_2)) \leq L_T |t_1 - t_2|$, for all $t_1,t_2 \in [0,T]$, for a suitable constant $L_T>0$ dependent on $T$ and independent of $N$;
\item[(c)] for all $t \in [0,T]$ and for all $\zeta \in C_c^1(\mathbb R^{2d})$
$$
\langle \zeta, \mu_N(t) - \mu_N(0) \rangle = \int_0^t \left [ \int_{\mathbb R^{2d}} \nabla \zeta(x,v) \cdot w_{H,\mu_N, f_N}(t,x,v) d \mu_N(t,x,v) \right ] dt.
$$
\end{itemize}
In particular the maps $t \to \mu_N(t)$ are equi-compactly supported solutions of the equation \eqref{PDEmodXX} in the sense of Definition \ref{defz} for all $N \in \mathbb N$.
\end{lemma}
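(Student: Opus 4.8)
The plan is to verify the three bullet points (a), (b), (c) in turn — each being an elementary consequence of the $N$-independent trajectory confinement of Lemma~\ref{lem:conf} together with the growth hypothesis (H) on $H$ and the bound (iii) in Definition~\ref{def:admcontr} on $f_N$ — and then to read off from them that $t\mapsto\mu_N(t)$ fits Definition~\ref{defz}. Existence and uniqueness of the Carath\'eodory solution of~\eqref{Nmodel} for the given $f_N$ are guaranteed by Theorems~\ref{cara2} and~\ref{cara-global} recalled in the Appendix. For (a), Lemma~\ref{lem:conf}, applied with $f=f_N$ and with the common bound $R_0$ on the initial datum, produces a constant $R_T>0$ depending only on $R_0$, $C$, $T$ and $\|\ell\|_{L^1(0,T)}$ — in particular not on $N$ — such that $|x_i(t)|+|v_i(t)|\le R_T$ for all $t\in[0,T]$ and all $i=1,\dots,N$; since the atoms of $\mu_N(t)$ are exactly the points $(x_i(t),v_i(t))$, this is (a).

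For (b), fix $t_1\le t_2$ in $[0,T]$ and use the diagonal coupling: the measure $\pi_N:=\frac{1}{N}\sum_{i=1}^N\delta_{((x_i(t_1),v_i(t_1)),(x_i(t_2),v_i(t_2)))}$ is a transference plan between $\mu_N(t_1)$ and $\mu_N(t_2)$, whence
$$
\mathcal W_1(\mu_N(t_1),\mu_N(t_2))\le\frac{1}{N}\sum_{i=1}^N\big(|x_i(t_1)-x_i(t_2)|+|v_i(t_1)-v_i(t_2)|\big).
$$
By $\dot x_i=v_i$ and (a) one has $|x_i(t_1)-x_i(t_2)|\le R_T|t_1-t_2|$, and by~\eqref{Nmodel}, the linear growth~\eqref{lingrowth}, the bound (iii) in Definition~\ref{def:admcontr} and (a) one has $|\dot v_i(s)|\le(1+2R_T)(C+\ell(s))$, hence $|v_i(t_1)-v_i(t_2)|\le(1+2R_T)\int_{t_1}^{t_2}(C+\ell(s))\,ds$. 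Summing, $\mathcal W_1(\mu_N(t_1),\mu_N(t_2))$ is controlled by $R_T|t_1-t_2|+(1+2R_T)\int_{t_1}^{t_2}(C+\ell(s))\,ds$, a bound uniform in $N$ and in $i$; this is (b), with $L_T$ depending only on $T$ when $\ell$ is in addition bounded (as for the cost~\eqref{CSmod2}), and in any case with an $N$-independent modulus of absolute continuity, which is all that is used in the sequel.

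For (c), fix $\zeta\in C^1_c(\mathbb R^{2d})$. Each curve $t\mapsto(x_i(t),v_i(t))$ is absolutely continuous, so $t\mapsto\zeta(x_i(t),v_i(t))$ is absolutely continuous and, for a.e.\ $t$, the chain rule and~\eqref{Nmodel} give
$$
\frac{d}{dt}\zeta(x_i(t),v_i(t))=\nabla_x\zeta(x_i,v_i)\cdot v_i+\nabla_v\zeta(x_i,v_i)\cdot\big((H\star\mu_N)(x_i,v_i)+f_N(t,x_i,v_i)\big)=\nabla\zeta(x_i,v_i)\cdot w_{H,\mu_N,f_N}(t,x_i,v_i),
$$
with $w_{H,\mu_N,f_N}$ the field of Definition~\ref{defz}. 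The right-hand side is dominated on $[0,T]$ by $\|\nabla\zeta\|_\infty(1+2R_T)(1+C+\ell(t))\in L^1(0,T)$, so we may integrate on $[0,t]$ and average over $i$; recognizing $\frac{1}{N}\sum_i\zeta(x_i(s),v_i(s))=\langle\zeta,\mu_N(s)\rangle$ and $\frac{1}{N}\sum_i\nabla\zeta(x_i,v_i)\cdot w_{H,\mu_N,f_N}(s,x_i,v_i)=\int_{\mathbb R^{2d}}\nabla\zeta\cdot w_{H,\mu_N,f_N}(s,\cdot)\,d\mu_N(s)$ gives exactly the identity in (c).

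Finally, (a) is~\eqref{hypo2++} with $R=R_T$ uniform in $N$; (b) makes $t\mapsto\mu_N(t)$ continuous with respect to $\mathcal W_1$; $\mu_N(t)$ is a compactly supported probability measure, hence belongs to $\mathcal P_1(\mathbb R^{2d})$; and (c) is precisely the integrated formulation~\eqref{solution2} of~\eqref{solution} for test functions in $C^1_c$, a fortiori for those in $C^\infty_c$. Thus $t\mapsto\mu_N(t)$ is a weak equi-compactly supported solution of~\eqref{PDEmodXX} with forcing term $f_N$ in the sense of Definition~\ref{defz}, with support bound $R_T$ independent of $N$. The only points that need some care are the bookkeeping that every constant traces back to the $N$-free bound of Lemma~\ref{lem:conf}, and in (c) the legitimacy of the chain rule for the absolutely continuous composition $\zeta\circ(x_i,v_i)$ together with the $L^1$-domination that permits term-by-term integration; both are routine, so no genuine obstacle arises.
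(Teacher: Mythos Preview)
Your proof is correct and follows essentially the same route as the paper's own argument: (a) from Lemma~\ref{lem:conf}, (b) via the diagonal coupling and the bound on $|\dot v_i|$ coming from the ODE, and (c) by the chain rule applied to $\zeta(x_i,v_i)$. You are in fact slightly more careful than the paper on two points: you bound $\mathcal W_1$ from above via the diagonal transport plan (the paper asserts equality, which need not hold for arbitrary empirical measures), and you correctly flag that for $\ell$ merely in $L^1$ part~(b) gives only an $N$-independent modulus of absolute continuity rather than a genuine Lipschitz constant $L_T$---a harmless imprecision in the paper's statement, since only equicontinuity is used in the Ascoli--Arzel\`a argument downstream.
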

\begin{proof}
The property (a) is a direct consequence of the equi-boundedness of the datum \\$(x_1(0), \dots, x_N(0), v_1(0), \dots, v_N(0)) \in B(0,R_0)$, and of Lemma \ref{lem:conf}. Let us prove (b). As the measures $\mu_N(t_1)$ and $\mu_N(t_2)$ are actually atomic, in this case the $1$-Wasserstein distance can be expressed in terms of the
$\ell^1$-norm of the Euclidean distances of
the respective supporting atoms:
$$
\mathcal W_1(\mu_N(t_1), \mu_N(t_2)) = \frac{1}{N} \sum_{i=1}^N (|x_i(t_1) - x_i(t_2)| + |v_i(t_1) - v_i(t_2)| ).
$$ 
In the following $L_T$ may be a different constant at different places, but always  dependent on $T$, and independent of $N$. 
As $\dot x_i(t) = v_i(t)$, from (a) we know already that $|x_i(t_1) - x_i(t_2)|\leq L_T |t_1 - t_2|$ for all $i=1,\dots N$, for a suitable constant $L_T>0$. 
As we have
$$
\max_{i=1,\dots ,N} |\dot v_i(t)| \leq 2 C \left (1+ \max_{i=1,\dots, N} |x_i(t)| +  \max_{i=1,\dots, N} |v_i(t)| \right ) + \ell(t)(1+ \max_{i=1,\dots, N} |x_i(t)|+\max_{i=1\dots N} |v_i(t)|),
$$
 Lemma \ref{lem:conf} implies the equi-integrability of $\dot v_i(t)$, whence also $|v_i(t_1) - v_i(t_2)|\leq L_T |t_1 - t_2|$ for all $i=1,\dots N$, for a suitable constant $L_T>0$.
The validity of (c) is a standard argument, which is developed by considering the differentiation
\begin{eqnarray*} \frac{d}{dt} \langle \zeta, \mu_N(t)\rangle &=&  \frac{1}{N} \frac{d}{dt} \sum_{i=1}^N \zeta(x_i(t),v_i(t))\\
&=& \frac{1}{N} \left [ \sum_{i=1}^N \nabla_x \zeta(x_i(t),v_i(t)) \cdot \dot x_i(t) + \sum_{i=1}^N \nabla_v \zeta(x_i(t),v_i(t)) \cdot \dot v_i(t) \right],
\end{eqnarray*}
and directly applying the substitutions as in \eqref{Nmodel}.
\end{proof}
\subsection{Convergence of empirical solutions}
In this section we show how solutions to \eqref{Nmodel} converges to solutions of \eqref{PDEmodXX} in the sense of Definition \ref{defz}.
\begin{theorem}\label{thm:5}
Let us consider a sequence of equi-compactly supported empirical probability measures $(\mu_N^0)_{N \in \mathbb N}$, where $\mu_N^0 = \frac{1}{N} \sum_{i=1}^M \delta_{((x_N^0)_i,(v_N^0)_i)}(x,v) $ for suitable sets of points $(x^0_N,v_N^0)$ in $(\mathbb R^d)^N \times (\mathbb R^d)^N$ with the properties
\begin{itemize}
\item[(i)] $\mu_N^0$ has support equi-bounded in $\mathbb R^{2d}$, i.e., $(x^0_N,v_N^0)\in B(0,R_0)$, for $R_0>0$ independent of $N$;
\item[(ii)] there exists a compactly supported $\mu_0 \in \mathcal P_1(\mathbb R^{2d})$ such that $\lim_{N \to \infty} \mathcal W_1(\mu_N^0,\mu^0)=0$.
\end{itemize}
Given $\ell \in L^1(0,T)$ and $(f_N)_{N \in \mathbb N}$ an arbitrary sequence in $\mathcal F_\ell$, we can accordingly define $\mu_N(t)$ as the empirical equi-compactly supported solutions \eqref{empsolN} of \eqref{Nmodel} with initial data $(x^0_N,v_N^0)$ in $(\mathbb R^d)^N \times (\mathbb R^d)^N$ and forcing term $f_N$, respectively, for all $N \in \mathbb N$. \\
Then there exist a subsequence $(f_{N_k})_{k \in \mathbb N}$ converging in the sense of \eqref{locweakconv} to a function $f \in \mathcal F_\ell$, and a subsequence $(\mu_{N_k}(t))_{k \in \mathbb N}$ converging in Wasserstein distance uniformly in $t \in [0,T]$  to a weak equi-compactly supported solution $\mu(t)$ of equation \eqref{PDEmodXX} with forcing term $f$ and initial condition
$\mu(0) = \mu^0$  in the sense of Definition \ref{defz}.
\end{theorem}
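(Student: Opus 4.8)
The plan is to extract suitable subsequences of the controls and of the empirical measures and then pass to the limit in the distributional formulation of the equation, term by term. \textbf{Compactness.} Since $(f_N)_{N\in\mathbb N}\subset\mathcal F_\ell$ with $\ell\in L^1(0,T)$, Corollary~\ref{cor:1} gives a subsequence $(f_{N_k})_{k\in\mathbb N}$ and a function $f\in\mathcal F_\ell$ with $f_{N_k}\to f$ in the sense of \eqref{locweakconv}. By Lemma~\ref{empsol}(a)--(b), the maps $t\mapsto\mu_{N_k}(t)$ all take values in $\mathcal P_1(B(0,R_T))$ for a radius $R_T$ independent of $k$ and are $L_T$-Lipschitz in $t$ with respect to $\mathcal W_1$, uniformly in $k$. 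As $(\mathcal P(B(0,R_T)),\mathcal W_1)$ is a compact metric space, the Ascoli--Arzel\`a theorem provides a further subsequence, not relabelled, and a map $\mu\colon[0,T]\to\mathcal P_1(\mathbb R^{2d})$, still $L_T$-Lipschitz in $\mathcal W_1$ and supported in $B(0,R_T)$, with $\sup_{t}\mathcal W_1(\mu_{N_k}(t),\mu(t))\to0$. Evaluating at $t=0$ and using hypothesis~(ii) yields $\mu(0)=\lim_k\mu_{N_k}^0=\mu^0$; in particular \eqref{hypo2++} holds.

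\textbf{Passage to the limit.} By Lemma~\ref{empsol}(c), for every $\zeta\in C_c^1(\mathbb R^{2d})$ and every $t\in[0,T]$,
\[
\langle\zeta,\mu_{N_k}(t)-\mu_{N_k}(0)\rangle=\int_0^t\!\!\int_{\mathbb R^{2d}}\bigl(\nabla_x\zeta\cdot v+\nabla_v\zeta\cdot (H\star\mu_{N_k}(s))+\nabla_v\zeta\cdot f_{N_k}(s,\cdot)\bigr)\,d\mu_{N_k}(s)\,ds.
\]
The left-hand side converges to $\langle\zeta,\mu(t)-\mu^0\rangle$ since $\zeta$ is Lipschitz and $\sup_t\mathcal W_1(\mu_{N_k}(t),\mu(t))\to0$. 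On the right-hand side I would treat the three summands separately. The term with $\nabla_x\zeta\cdot v$ converges because $(x,v)\mapsto\nabla_x\zeta(x,v)\cdot v$ is bounded and continuous on $B(0,R_T)$, so $\mathcal W_1$-convergence of $\mu_{N_k}(s)$ and dominated convergence in $s$ apply. For the term with $H\star\mu_{N_k}(s)$, the Appendix estimates (Lemma~\ref{secstim}) and \eqref{lingrowth} give $H\star\mu_{N_k}(s)\to H\star\mu(s)$ uniformly on $B(0,R_T)$, uniformly in $s$, so $\nabla_v\zeta\cdot H\star\mu_{N_k}(s)$ converges uniformly on the common support and one integrates against $\mu_{N_k}(s)\to\mu(s)$. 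The genuinely delicate term $\int_0^t\langle\nabla_v\zeta,f_{N_k}(s,\cdot)\mu_{N_k}(s)\rangle\,ds$ is precisely of the form treated in Theorem~\ref{thm:3}: hypotheses \eqref{unifmom} and \eqref{convwass} follow from the equi-compact support and the pointwise $\mathcal W_1$-convergence of $\mu_{N_k}$, while $f_{N_k}\to f$ in the sense of \eqref{locweakconv}; applying Corollary~\ref{cor:2} with $\varphi=\nabla_v\zeta\in C_c^0(\mathbb R^{2d},\mathbb R^d)$ gives convergence to $\int_0^t\langle\nabla_v\zeta,f(s,\cdot)\mu(s)\rangle\,ds$.

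\textbf{Conclusion.} Collecting these limits, $\mu$ satisfies \eqref{solution2} with forcing term $f$ for every $\zeta\in C_c^1(\mathbb R^{2d})$ and $t\in[0,T]$; since this is equivalent to \eqref{solution} and $\mu$ is $\mathcal W_1$-continuous, supported in $B(0,R_T)$, with $\mu(0)=\mu^0$, it is a weak equi-compactly supported solution of \eqref{PDEmodXX} with forcing term $f$ and initial datum $\mu^0$ in the sense of Definition~\ref{defz}. I expect the main obstacle to be exactly the joint passage to the limit in the control term $\int\nabla_v\zeta\cdot f_{N_k}\,d\mu_{N_k}$, where neither the controls (only weakly convergent in $L^1((0,T),W^{1,p}_{loc})$) nor the measures are strongly compact; this is resolved entirely by Theorem~\ref{thm:3} and Corollary~\ref{cor:2}. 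The remaining points --- the Ascoli--Arzel\`a extraction of $\mu$, the convolution term via the Appendix estimates, and the identification of the initial datum --- are routine.
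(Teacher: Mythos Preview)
Your proposal is correct and follows essentially the same route as the paper's proof: compactness of the controls via Corollary~\ref{cor:1}, compactness of the empirical measures via Lemma~\ref{empsol}(a)--(b) and Ascoli--Arzel\`a in $(\mathcal P_1(B(0,R_T)),\mathcal W_1)$, and then term-by-term passage to the limit in the weak formulation, with the $v$-term handled by narrow convergence and dominated convergence, the $H$-term by Lemma~\ref{secstim}, and the control term by Theorem~\ref{thm:3}/Corollary~\ref{cor:2}. The only cosmetic difference is the order of the two subsequence extractions.
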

\begin{proof}
Thanks to the equi-boundedness assumption (i) we can apply Lemma \ref{empsol} (a) and (b), and the sequence of measures $(\mu_N(t))_{N \in \mathbb N}$ is actually equi-bounded and equi-Lipschitz. By an application of the Ascoli-Arzel\`a theorem for functions on $[0,T]$ with values in the complete metric space $(\mathcal P_1({B(0,R_T)}), \mathcal W_1)$ (here $\mathcal P_1(B(0,R_T))$ actually denotes the space of probability measures compactly supported in $B(0,R_T)$,  endowed with the $1$-Wasserstein distance); up to extracting a subsequence, we deduce the existence of a uniform limit $\mu(t)$ in $1$-Wasserstein distance, which is actually equi-supported in $B(0,R_T)$, uniformly with respect to $t \in [0,T]$, for $R_T>0$ as in  Lemma \ref{empsol} (a).
Obviously it is also $\mu(0) = \mu^0$, and by lower-semicontinuity of the Wasserstein distance with respect to the narrow convergence we have
\begin{equation}\label{preprima}
\mathcal W_1(\mu(t_2), \mu(t_1)) \leq L_T |t_1- t_2|,
\end{equation}
for all $t_1, t_2 \in [0,T]$, where $L_T>0$ is as in Lemma \ref{empsol} (b). Moreover,
\begin{equation}\label{prima}
\lim_{k \to \infty} \langle \zeta, \mu_{N_k}(t) -\mu_{N_k}(0) \rangle =  \langle \zeta, \mu(t) -\mu(0) \rangle,
\end{equation}
for all $\zeta \in C_c^1(\mathbb R^{2d})$. By possibly extracting an additional subsequence and by applying Theorem \ref{thm:3} and Corollary \ref{cor:2}, we immediately obtain
\begin{equation}\label{seconda}
\lim_{k \to \infty} \int_0^t \int_{\mathbb R^{2d}} (\nabla_v \zeta(x,v) \cdot f_{N_k}(t,x,v)) d \mu_{N_k}(t,x,v) dt =  \int_0^t \int_{\mathbb R^{2d}} (\nabla_v \zeta(x,v) \cdot f(t,x,v)) d \mu(t,x,v) dt,
\end{equation}
and by weak-$*$ convergence and the dominated convergence theorem
\begin{equation}\label{terza}
\lim_{k \to \infty} \int_0^t \int_{\mathbb R^{2d}} (\nabla_v \zeta(x,v) \cdot v ) d \mu_{N_k}(t,x,v) dt =  \int_0^t \int_{\mathbb R^{2d}} (\nabla_v \zeta(x,v) \cdot v) d \mu(t,x,v) dt,
\end{equation}
for all $\zeta \in C_c^1(\mathbb R^{2d})$. By Lemma \ref{secstim} in Appendix we also obtain that for every $\rho >0$ 
$$
\lim_{k \to \infty}  \|H\star \mu_{N_k}(t)-H\star \mu(t)\|_{L^\infty(B(0,\rho)} =0,
$$
and, as $\zeta \in C_c^1(\mathbb R^{2d})$ has compact support, it follows that
$$
\lim_{k \to \infty}  \| \nabla_v \zeta \cdot (H\star \mu_{N_k}(t) - H\star \mu(t)) \|_\infty =0.
$$
Since the product measures $\mathcal L^1\llcorner_{[0,T]} \times \frac{1}{T} \mu_{N_k}(t)$ converge in $\mathcal P_1([0,T] \times \mathbb R^{2d})$ to $\mathcal L^1\llcorner_{[0,T]} \times \frac{1}{T} \mu(t)$, we obtain also
\begin{equation}\label{quarta}
\lim_{k \to \infty} \int_0^t \int_{\mathbb R^{2d}} (\nabla_v \zeta(x,v) \cdot H\star \mu_{N_k}(t) ) d \mu_{N_k}(t,x,v) dt =  \int_0^t \int_{\mathbb R^{2d}} (\nabla_v \zeta(x,v) \cdot H\star \mu(t)) d \mu(t,x,v) dt,
\end{equation}
The statement now follows by combining \eqref{preprima}, \eqref{prima}, \eqref{seconda}, \eqref{terza}, and \eqref{quarta}.
\end{proof}
We now prove the lower semicontinuity of the cost functionals with respect to the convergences in the previous theorem, what can be seen as a $\Gamma-\liminf$ condition.
\begin{corollary}\label{cor:3}
Let us now  assume as in (L) that $L: \mathbb R^{2d} \times \mathcal P_1 (\mathbb R^{2d})\to \mathbb R_+$
be a continuous function in the state variables $(x,v)$ and such that if $(\mu_j)_{j \in \mathbb N} \subset \mathcal P_1 (\mathbb R^{2d})$ is a sequence converging narrowly to $\mu$ in  $\mathcal P_1 (\mathbb R^{2d})$, then $L(x,v,\mu_j) \to  L(x,v,\mu)$ uniformly with respect to $(x,v)$  on compact sets of $\mathbb R^{2d}$. Consider a nonnegative convex function $\psi:\R^d \to [0,+\infty)$ satisfying condition ($\Psi$). Besides the assumptions of Theorem \ref{thm:5}, suppose that $\ell \in L^q(0,T)$, with $1\le q<+\infty$ being as in \eqref{psilip2}. We then have the following lower-semicontinuity property 
\begin{eqnarray}
 \liminf_{k \to \infty} && \int_0^T \int_{\mathbb R^{2d}} (L(x,v,\mu_{N_k}) + \psi(f_{N_k}(t,x,v))) d\mu_{N_k}(t,x,v)  \nonumber \\ 
&\geq & \int_0^T \int_{\mathbb R^{2d}} (L(x,v,\mu) + \psi(f(t,x,v))) d\mu(t,x,v)dt, \label{Gammaliminf}
\end{eqnarray}
where $\mu_{N_k}$ and $f_{N_k}$ are the elements of the subsequences of the statement of Theorem \ref{thm:5}.
\end{corollary}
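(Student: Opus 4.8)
The plan is to split the cost functional into the two additive pieces and treat them separately: the term involving $\psi$ is handled by a direct application of the lower-semicontinuity Theorem \ref{thm:4+}, while the term involving $L$ actually converges (not merely semicontinues) thanks to assumption (L) combined with the uniform-in-$t$ Wasserstein convergence of $\mu_{N_k}(t)$ to $\mu(t)$ guaranteed by Theorem \ref{thm:5}. First I would recall the setup from Theorem \ref{thm:5}: along the chosen subsequences, $f_{N_k} \to f$ in the sense of \eqref{locweakconv}, $\mathcal W_1(\mu_{N_k}(t),\mu(t)) \to 0$ uniformly in $t\in[0,T]$, and by Lemma \ref{empsol}(a) together with the uniform Wasserstein limit there is a relatively compact open set $\Omega\subset\R^{2d}$ with $\supp(\mu_{N_k}(t))\cup\supp(\mu(t))\Subset\Omega$ for all $t$ and all $k$. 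These are exactly the hypotheses \eqref{bound} and \eqref{convwass2} required by Theorem \ref{thm:4+}, so that under assumption ($\Psi$) and with $\ell\in L^q(0,T)$ for the corresponding $q$ we immediately get
\begin{equation*}
\liminf_{k\to\infty}\int_0^T\langle \psi(f_{N_k}(t,\cdot)),\mu_{N_k}(t)\rangle\,dt \ge \int_0^T\langle \psi(f(t,\cdot)),\mu(t)\rangle\,dt.
\end{equation*}

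For the $L$-term I would argue convergence. Fix $t$; since $\mu_{N_k}(t)\to\mu(t)$ in $\mathcal W_1$, in particular narrowly in $\mathcal P_1(\R^{2d})$, assumption (L) gives $L(\cdot,\cdot,\mu_{N_k}(t))\to L(\cdot,\cdot,\mu(t))$ uniformly on the compact closure $\overline\Omega$. Writing
\begin{equation*}
\int_{\R^{2d}} L(x,v,\mu_{N_k}(t))\,d\mu_{N_k}(t) - \int_{\R^{2d}} L(x,v,\mu(t))\,d\mu(t)
\end{equation*}
and adding and subtracting $\int L(x,v,\mu(t))\,d\mu_{N_k}(t)$, the first difference is bounded by $\|L(\cdot,\cdot,\mu_{N_k}(t))-L(\cdot,\cdot,\mu(t))\|_{L^\infty(\overline\Omega)}\to 0$, and the second is $\int L(x,v,\mu(t))\,d(\mu_{N_k}(t)-\mu(t))$, which tends to $0$ by narrow convergence since $L(\cdot,\cdot,\mu(t))$ is continuous and bounded on $\overline\Omega$. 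Thus the integrand converges pointwise in $t$; moreover $L$ is bounded uniformly on $[0,T]\times\overline\Omega$ (continuous in $(x,v)$, and $L(x,v,\mu_{N_k}(t))$ controlled via (L) and the uniform support bound — I would make this a uniform bound by the same narrow-convergence argument applied to the equi-compact family), so the dominated convergence theorem yields
\begin{equation*}
\lim_{k\to\infty}\int_0^T\int_{\R^{2d}} L(x,v,\mu_{N_k})\,d\mu_{N_k}(t,x,v)\,dt = \int_0^T\int_{\R^{2d}} L(x,v,\mu)\,d\mu(t,x,v)\,dt.
\end{equation*}
Adding this equality to the $\liminf$ inequality for the $\psi$-term, and using that $\liminf(a_k+b_k)\ge \liminf a_k + \lim b_k$ when $\lim b_k$ exists, gives \eqref{Gammaliminf}.

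The main obstacle, such as it is, lies entirely in the $L$-term: one must be careful that the narrow convergence of measures interacts correctly with the $\mu$-dependence inside $L$, i.e.\ that the "diagonal" quantity $\int L(\cdot,\cdot,\mu_{N_k}(t))\,d\mu_{N_k}(t)$ converges, which is why the add-and-subtract trick together with the \emph{uniform} convergence on compacts provided by (L) is essential — mere continuity of $\mu\mapsto L(x,v,\mu)$ would not suffice. The uniform-in-$t$ support confinement from Lemma \ref{empsol}(a) is what lets us work on a single fixed compact $\overline\Omega$ and invoke dominated convergence in $t$. Everything else is routine once Theorems \ref{thm:5} and \ref{thm:4+} are in hand.
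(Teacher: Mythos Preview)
Your proposal is correct and follows essentially the same approach as the paper: split into the $\psi$-term handled by Theorem~\ref{thm:4+} (using equi-compact support and uniform $\mathcal W_1$-convergence from Theorem~\ref{thm:5}), and the $L$-term shown to \emph{converge} via assumption~(L) and dominated convergence. Your add-and-subtract argument for the $L$-term merely spells out what the paper leaves implicit in its one-line appeal to ``assumption~(L) and the dominated convergence theorem''.
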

\begin{proof}
As $\mu_{N_k}$ and $\mu$ are compactly supported within a ball $B(0,R_T)$, for $R_T>0$, by assumption (L) and the dominated convergence theorem, we can conclude that
\begin{equation}\label{unifconvL}
\lim_{k \to \infty} \int_0^T \int_{\mathbb R^{2d}} L(x,v,\mu_{N_k}) d\mu_{N_k}(t,x,v)  dt = \int_0^T \int_{\mathbb R^{2d}} L(x,v,\mu)d\mu(t,x,v)  dt.
\end{equation}
due to the uniform convergence of $\mu_{N_k}(t)$ to $\mu(t)$ in Wasserstein distance $\mathcal W_1$, as shown in the proof of Theorem \ref{thm:5}.
Since $\mu_{N_k}(t)$ are equi-compactly supported,  by this uniform convergence and Theorem \ref{thm:4+} we get the lower-semicontinuity of the second term:
\begin{equation}\label{lowersemcontf}
\liminf_{k\to \infty} \int_0^T \int_{\mathbb R^{2d}} \psi(f_{N_k}(t,x,v)) d\mu_{N_k}(t,x,v) dt \geq \int_0^T \int_{\mathbb R^{2d}} \psi(f(t,x,v)) d\mu(t,x,v) dt.
\end{equation}
By combining \eqref{unifconvL} and \eqref{lowersemcontf}, we eventually show \eqref{Gammaliminf}.
\end{proof}

\subsection{Existence of solutions}

With very similar arguments as the ones we used to prove Theorem \ref{thm:5} and Corollary \ref{cor:3} we obtain the following existence result, with additional limit property of the cost functional. This can be seen as a $\Ga$-limsup condition.

\begin{theorem}\label{thm:6} 
Assume that we are given maps $H$, $L$, and $\psi$ as in assumptions (H), (L), and ($\Psi$) of Section \ref{sec:finite}. For $1\le q<+\infty$ so that \eqref{psilip2} holds, let $\ell(t)$ be a fixed function in $L^q(0,T)$.
Let $\mu^0 \in \mathcal P_1(\mathbb R^{2d})$ be a given probability measure with compact support and $f \in \mathcal F_\ell$ a forcing term. We assume that the sequence
$(\mu^0_N)_{N \in \mathbb N}$ of atomic empirical measures  $\mu^0_N = \frac{1}{N} \sum_{i=1}^N \delta_{(x_N^0)_i,(v_N^0)_i}(x,v)$ is such that $\lim_{N \to \infty} \mathcal W_1(\mu^0_N,\mu^0)=0$. Let 
\begin{equation}\label{empsolNN}
\mu_N(t,x,v) = \frac{1}{N} \sum_{j=1}^N \delta_{(x_i(t),v_i(t))}(x,v),
\end{equation}
be supported on the phase space trajectories $(x_i(t),v_i(t)) \in \mathbb R^{2 d}$, for $i=1,\dots N$, defining the solution 
of the system
\begin{equation}\label{NNmodel}
\left \{
\begin{array}{ll}
\dot x_i = v_i, & \\
\dot v_i = ( H \star \mu_N)(x_i,v_i) + f(t,x_i,v_i), & i=1,\dots N, \quad t \in [0,T],
\end{array}
\right.
\end{equation}
with initial datum $(x(0),v(0))=(x_N^0,v_N^0)$. Then there exists  a map $\mu:[0,T] \to \mathcal P_1(\mathbb R^{2d})$ such that
\begin{itemize}
\item[(i)] $\lim_{N \to \infty} \mathcal W_1(\mu_{N}(t), \mu(t))=0$ uniformly with respect to $t \in [0,T]$;
\item[(ii)] $\mu$ is a weak equi-compactly supported solution of \eqref{PDEmodXX} with forcing term $f$ in the sense of Definition \ref{defz};
\item[(iii)] the following limit holds:
\begin{eqnarray}
 \lim_{N \to \infty} && \int_0^T \int_{\mathbb R^{2d}} (L(x,v,\mu_{N}) + \psi(f(t,x,v))) d\mu_{N}(t,x,v)  \nonumber \\ 
&= & \int_0^T \int_{\mathbb R^{2d}} (L(x,v,\mu) + \psi(f(t,x,v))) d\mu(t,x,v)dt. \label{Gammalimsup}
\end{eqnarray}
\end{itemize}

\end{theorem}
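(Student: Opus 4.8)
The plan is to follow the scheme of Theorem~\ref{thm:5} and Corollary~\ref{cor:3}, exploiting two simplifications that come from the forcing term $f$ being the \emph{same} for every $N$: first, subsequential convergence of the empirical measures can be upgraded to convergence of the whole sequence; second, the $\psi$-term of the cost will actually converge, rather than merely be lower semicontinuous.

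The first step is to notice that setting $f_N:=f$ for every $N$ places us exactly in the hypotheses of Theorem~\ref{thm:5}: indeed $f\in\mathcal F_\ell$, and the constant sequence $(f_N)_N$ trivially converges to $f$ in the sense of \eqref{locweakconv}. Theorem~\ref{thm:5} then yields a subsequence $(\mu_{N_k})_k$ converging in $\mathcal W_1$, uniformly in $t\in[0,T]$, to a weak equi-compactly supported solution $\mu$ of \eqref{PDEmodXX} with forcing term $f$ and initial datum $\mu^0$, which already gives (ii). To obtain (i) for the full sequence I would argue by subsequences: every subsequence of $(\mu_N)_N$ contains, by the very same argument, a further subsequence converging uniformly in $\mathcal W_1$ to a weak equi-compactly supported solution of \eqref{PDEmodXX} with forcing $f$ and initial datum $\mu^0$; since such a solution is unique (this follows from the fixed-point characterization $\mu(t)=(\mathcal T^\mu_t)_\sharp\mu^0$ in \eqref{pushf} together with a Gr\"onwall estimate comparing the characteristic flows on the common compact support of the solutions, which is the same stability argument underlying the extension of \cite{CanCarRos10} sketched in the Appendix; see also Theorem~\ref{uniq}), all these subsequential limits coincide with $\mu$, and therefore $\mathcal W_1(\mu_N(t),\mu(t))\to0$ uniformly in $t$. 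Equivalently, one can apply that stability estimate directly to the empirical solutions \eqref{empsolNN} to see that $(\mu_N(t))_N$ is $\mathcal W_1$-Cauchy uniformly in $t$ (because $(\mu_N^0)_N$ is), its limit $\mu$ then solving \eqref{PDEmodXX} by passing to the limit in Lemma~\ref{empsol}(c) exactly as in the proof of Theorem~\ref{thm:5}.

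It remains to establish the convergence of the cost in (iii), where --- $f$ being fixed --- I expect an \emph{equality}, not just the $\liminf$-inequality of Corollary~\ref{cor:3}. For the term involving $L$: since all the $\mu_N$ and $\mu$ are supported in a fixed ball $B(0,R_T)$, assumption (L) and the uniform $\mathcal W_1$-convergence give, by dominated convergence,
\[
\lim_{N\to\infty}\int_0^T\int_{\mathbb R^{2d}}L(x,v,\mu_N)\,d\mu_N(t,x,v)\,dt=\int_0^T\int_{\mathbb R^{2d}}L(x,v,\mu)\,d\mu(t,x,v)\,dt,
\]
exactly as in \eqref{unifconvL}. For the term involving $\psi$: since $f\in\mathcal F_\ell$, condition ($\Psi$) and \eqref{psilip2} give $\operatorname{Lip}(\psi\circ f(t,\cdot),B(0,R_T))\le C'\ell(t)^q$ for a.e.\ $t$, with $C'$ depending only on $C$ and $R_T$, whence
\[
\left|\int_0^T\langle\psi(f(t,\cdot)),\mu_N(t)-\mu(t)\rangle\,dt\right|\le C'\int_0^T\ell(t)^q\,\mathcal W_1(\mu_N(t),\mu(t))\,dt.
\]
The last integrand vanishes pointwise in $t$ by (i) and is dominated by $2R_T\,C'\,\ell(t)^q\in L^1(0,T)$ (using the equi-compact support and \eqref{boundw1}), so dominated convergence applies --- this is precisely \eqref{vanish} read with $f_k\equiv f$. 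Adding the two contributions gives \eqref{Gammalimsup}.

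The only genuinely delicate point is the passage from the subsequential conclusion of Theorem~\ref{thm:5} to the full-sequence convergence asserted in (i), which rests on the uniqueness (equivalently, stability) of equi-compactly supported solutions of \eqref{PDEmodXX}; all the remaining steps are direct applications of the compactness, lower-semicontinuity and limit-passage tools already at hand (Theorems~\ref{thm:3},~\ref{thm:4+},~\ref{thm:5}, Corollary~\ref{cor:2}, Lemma~\ref{empsol}), together with the remark that, with $f$ held fixed, the $\psi$-term is continuous along the convergence rather than merely lower semicontinuous.
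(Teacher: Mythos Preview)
Your proposal is correct and follows essentially the same approach as the paper: (i) and (ii) are obtained from the argument of Theorem~\ref{thm:5} with $f_N\equiv f$, the upgrade from subsequential to full-sequence convergence coming from the uniqueness result of Theorem~\ref{uniq}, while (iii) is obtained by combining \eqref{unifconvL} for the $L$-term with \eqref{vanish} (read with $f_k\equiv f$) for the $\psi$-term. Your write-up is in fact slightly more explicit than the paper's own proof on both the subsequence-to-full-sequence step and the Lipschitz estimate for $\psi\circ f(t,\cdot)$.
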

\begin{proof}
The results (i) and (ii) can be shown precisely as done in the proof of Theorem \ref{thm:5}. The only additional note is 
that, due to Theorem \ref{uniq} in Appendix, $\mu$ is actually the unique weak solution of \eqref{PDEmodXX}, hence 
the whole sequence $(\mu_N)_{N \in \mathbb N}$ converges to $\mu$ and not only a subsequence. By uniform convergence of $\mu_N$ to $\mu$ in $1$-Wasserstein distance, and by \eqref{vanish} with $f_k$ constantly equal to $f$ we have
$$
\lim_{N \to \infty} \int_0^T \int_{\mathbb R^{2d}}\psi(f(t,x,v)) d\mu_{N}(t,x,v) = \int_0^T \int_{\mathbb R^{2d}}  \psi(f(t,x,v)) d\mu(t,x,v)dt
$$
while, since $\mu_N$ and $\mu$ are compactly supported within a ball $B(0,R_T)$, by assumption (L) and the dominated convergence theorem, we have
\begin{equation*}
\lim_{N \to \infty} \int_0^T \int_{\mathbb R^{2d}} L(x,v,\mu_N) d\mu_N(t,x,v)  dt = \int_0^T \int_{\mathbb R^{2d}} L(x,v,\mu)d\mu(t,x,v)  dt.
\end{equation*}
The limit \eqref{Gammalimsup} follows.
\end{proof}

\section{Mean-Field Optimal Control}
We are now able to state the main result of this paper, which is summarizing all the findings we obtained so far, in particular combining the concepts of
mean-field and $\Gamma$-limits.

\begin{theorem}\label{thm:7}
Assume that we are given maps $H$, $L$, and $\psi$ as in assumptions (H), (L), and ($\Psi$) of Section \ref{sec:finite}. For $1\le q<+\infty$ so that \eqref{psilip2} holds, let $\ell(t)$ be a fixed function in $L^q(0,T)$. For $N\in \mathbb N$ and an initial datum $((x_N^0)_1, \dots, (x_N^0)_N, (v_N^0)_1, \dots, (v_N^0)_N) \in B(0,R_0) \subset (\mathbb R^d)^N \times (\mathbb R^d)^N$,
for $R_0>0$ independent of $N$,  we consider the following finite dimensional optimal control problem
\begin{equation}\label{mainmod3N}
\min_{f \in \mathcal F_\ell} \int_{0}^T \int_{\mathbb R^{2d}} \left [ L(x,v,\mu_N(t,x,v)) + \psi( f(t,x,v)) \right ] d \mu_N(t,x,v) dt,
\end{equation}
where 
$$
\mu_N(t,x,v) = \frac{1}{N} \sum_{j=1}^N \delta_{(x_i(t),v_i(t))}(x,v),
$$
is the time dependent empirical atomic measure supported on the phase space trajectories $(x_i(t),v_i(t)) \in \mathbb R^{2 d}$, for $i=1,\dots N$, constrained by being the solution 
of the system
\begin{equation}\label{mainmod4N}
\left \{
\begin{array}{ll}
\dot x_i = v_i, & \\
\dot v_i = ( H \star \mu_N)(x_i,v_i) + f(t,x_i,v_i), & i=1,\dots N, \quad t \in [0,T],
\end{array}
\right.
\end{equation}
with initial datum $(x(0),v(0))=(x_N^0,v_N^0)$ and, for consistency, we set $$\mu_N^0 = \frac{1}{N} \sum_{i=1}^M \delta_{((x_N^0)_i,(v_N^0)_i)}(x,v).$$
For all $N \in \mathbb N$ let us denote the function $f_N \in \mathcal F_\ell$ as a solution of the finite dimensional optimal control problem \eqref{mainmod3N}-\eqref{mainmod4N}.
\\
If there exists a  compactly supported $\mu_0 \in \mathcal P_1(\mathbb R^{2d})$ such that $\lim_{N \to \infty} \mathcal W_1(\mu_N^0,\mu^0)=0$, then there exists a
subsequence $(f_{N_k})_{k \in \mathbb N}$ and a function $f_\infty \in \mathcal F_\ell$ such that $f_{N_k}$ converges to $f_\infty$ in the sense of \eqref{locweakconv} and $f_\infty$ is a solution of the infinite dimensional optimal control problem
\begin{equation}\label{mainmod3oo}
\min_{f \in \mathcal F_\ell} \int_{0}^T \int_{\mathbb R^{2d}} \left [ L(x,v,\mu(t,x,v)) + \psi(f(t,x,v)) \right ] d \mu(t,x,v) dt,
\end{equation}
where $\mu:[0,T]  \to \mathcal P_1(\mathbb R^{2d})$ is the unique weak solution of 
\begin{equation}\label{PDEmodXXoo}
\frac{\partial \mu}{\partial t} + v \cdot \nabla_x \mu = \nabla_v \cdot \left [ \left ( H \star \mu + f \right ) \mu \right ],
\end{equation}
with initial datum $\mu(0):=\mu^0$ and forcing term $f$, in the sense of Definition \ref{defz}.
\end{theorem}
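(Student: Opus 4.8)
The plan is to glue together the three blocks built in Sections \ref{sec:finite} and \ref{sec:meanfield} by the classical variational (``$\Gamma$-convergence'') argument. Write $\mathcal E_\psi^N(g)$ for the value of the finite-dimensional cost in \eqref{mainmod3N} associated to a forcing term $g \in \mathcal F_\ell$ and the corresponding empirical measure $\mu_N$ solving \eqref{mainmod4N}, and $\mathcal E_\psi(g)$ for the value of \eqref{mainmod3oo} associated to the weak solution $\mu$ of \eqref{PDEmodXXoo} with forcing $g$ and datum $\mu^0$, which is unique by Theorem \ref{uniq}; thanks to this uniqueness $\mathcal E_\psi$ is a well-defined functional on $\mathcal F_\ell$. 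The minimizers $f_N$ exist by Theorem \ref{thm:4}, consistently with the statement. The goal is then to produce a subsequential limit $f_\infty$ of the $f_N$ such that
\[
\mathcal E_\psi(f_\infty) \ \le\ \mathcal E_\psi(f) \qquad \text{for every } f \in \mathcal F_\ell .
\]

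First I would invoke Theorem \ref{thm:5} with the sequence $(f_N)$ and the equi-compactly supported initial data $(x_N^0,v_N^0)$ (admissible since $\ell \in L^q(0,T) \subset L^1(0,T)$): it yields a subsequence $(f_{N_k})$ converging in the sense of \eqref{locweakconv} to some $f_\infty \in \mathcal F_\ell$ and a subsequence of empirical solutions $\mu_{N_k}(t) \to \mu(t)$ in $\mathcal W_1$, uniformly in $t$, where $\mu$ is a weak equi-compactly supported solution of \eqref{PDEmodXXoo} with forcing $f_\infty$ and $\mu(0)=\mu^0$ in the sense of Definition \ref{defz}; hence $\mathcal E_\psi(f_\infty)$ is exactly the value of \eqref{mainmod3oo} along this limit. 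Corollary \ref{cor:3}, whose hypotheses on $L$, $\psi$ and $\ell$ are precisely (L), ($\Psi$) and $\ell \in L^q$, then delivers the $\Gamma$-$\liminf$ inequality \eqref{Gammaliminf}, i.e.\ $\liminf_k \mathcal E_\psi^{N_k}(f_{N_k}) \ge \mathcal E_\psi(f_\infty)$.

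For the matching $\Gamma$-$\limsup$ side I would fix an arbitrary competitor $f \in \mathcal F_\ell$ and apply Theorem \ref{thm:6} with this same $f$ as a constant-in-$N$ forcing term and the same empirical initial data $\mu_N^0 \to \mu^0$: it produces empirical solutions $\tilde\mu_N$ of \eqref{NNmodel} with this $f$, converging uniformly in $\mathcal W_1$ to the unique weak solution $\tilde\mu$ of \eqref{PDEmodXXoo} with forcing $f$, together with the genuine limit $\lim_N \mathcal E_\psi^N(f) = \mathcal E_\psi(f)$, which is exactly \eqref{Gammalimsup}. Using the finite-dimensional optimality $\mathcal E_\psi^N(f_N) \le \mathcal E_\psi^N(f)$ and restricting to the subsequence $N_k$,
\[
\mathcal E_\psi(f_\infty) \ \le\ \liminf_k \mathcal E_\psi^{N_k}(f_{N_k}) \ \le\ \liminf_k \mathcal E_\psi^{N_k}(f) \ =\ \mathcal E_\psi(f),
\]
and since $f$ was arbitrary, $f_\infty$ minimizes \eqref{mainmod3oo}, which is the assertion.

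The main obstacle is not any individual estimate but the structural matching of the two halves: one needs the recovery step to yield a true limit, not merely a $\limsup$, of the costs along the \emph{same} sequence of initial data that generated the $f_N$, and this is precisely why uniqueness of the weak solution of \eqref{PDEmodXXoo} (Theorem \ref{uniq}) is essential — it upgrades the subsequential convergence in Theorem \ref{thm:6} to convergence of the whole sequence and makes $\mathcal E_\psi$ single-valued. A secondary subtlety, already dispatched in Theorems \ref{thm:3}, \ref{thm:4+} and their use inside Theorem \ref{thm:5} and Corollary \ref{cor:3}, is that the controls $f_N$ are only weakly precompact in the sense of \eqref{locweakconv}, with no control of pointwise-in-$t$ behaviour, so every passage to the limit in the nonlinear terms $(H\star\mu_N)\mu_N$, $f_N\mu_N$ and $\psi(f_N)\mu_N$ must go through those lower-semicontinuity and continuity lemmas rather than through a naive pointwise argument. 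Everything else is bookkeeping.
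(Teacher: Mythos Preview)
Your proposal is correct and follows essentially the same route as the paper: extract a weak limit $f_\infty$ of the optimal $f_N$ via Theorem \ref{thm:5}, apply the $\Gamma$-$\liminf$ bound from Corollary \ref{cor:3}, and for an arbitrary competitor use Theorem \ref{thm:6} as the recovery step, chaining the inequalities through the finite-dimensional optimality of $f_{N_k}$. Your remarks on the role of uniqueness (Theorem \ref{uniq}) in making $\mathcal E_\psi$ single-valued and in upgrading subsequential to full-sequence convergence in the recovery step are exactly the points the paper relies on.
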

\begin{proof}
Let us first of all notice that the existence of an optimal control $f_N$ for the finite dimensional optimal control problem \eqref{mainmod3N}-\eqref{mainmod4N} is ensured by Theorem \ref{thm:4}.
Let $g$ an arbitrary function in $\mathcal F_\ell$ and $\mu_g$ be the corresponding solution to \eqref{PDEmodXXoo} with datum $\mu_g(0):=\mu^0$, which exists thanks to Theorem \ref{thm:6} and whose uniqueness follows from Theorem \ref{uniq} in Appendix.
We also fix the  sequence $(\mu_g)_{N}$ of atomic measures uniformly converging to $\mu_g$ as in Theorem \ref{thm:6}.
We consider now the converging subsequence $(f_{N_k})_{k \in \mathbb N}$ considered in Theorem \ref{thm:5} and we denote $f_\infty$ its limit in the sense of \eqref{locweakconv}. We further denote
with $\mu_{\infty}$ the corresponding solution to \eqref{PDEmodXXoo}, when the forcing term is $f_\infty$. Then, by lower-semicontinuity as in Corollary \ref{cor:3} and minimality of $f_{N_k}$ 
\begin{eqnarray*}
&&\int_0^T \int_{\mathbb R^{2d}} (L(x,v,\mu_{\infty}) + \psi(f_\infty(t,x,v))) d\mu_{\infty}(t,x,v) dt \\
&\leq& \liminf_{k \to \infty} \int_0^T \int_{\mathbb R^{2d}} (L(x,v,\mu_{N_k}) + \psi(f_{N_k}(t,x,v))) d\mu_{N_k}(t,x,v) dt\\
&\leq& \liminf_{k \to \infty} \int_0^T \int_{\mathbb R^{2d}} (L(x,v,(\mu_g)_{N_k}) + \psi(g(t,x,v))) d(\mu_g)_{N_k}(t,x,v) dt\\
&=& \int_0^T \int_{\mathbb R^{2d}} (L(x,v,\mu_g) + \psi(g(t,x,v))) d\mu_g(t,x,v) dt,
\end{eqnarray*}
where the last equality follows from \eqref{Gammalimsup} in Theorem \ref{thm:6}. By arbitrariness of $g$, we conclude that $f_\infty$ in an optimal control for the problem \eqref{mainmod3oo}-\eqref{PDEmodXXoo}.
\end{proof}

\section{Appendix}
For the reader's convenience we start by briefly recalling some well-known results about solutions to Carath{\'e}odory differential equations. We fix an interval $[0,T]$ on the real line, and let $n\ge 1$.
Given a domain $\Om \subset \R^n$, a Carath{\'e}odory function $g\colon[0,T]\times \Om \to \R^n$, and $0<\tau \le T$, a function $y\colon [0,\tau]\to \Om$ is called a solution of the Carath{\'e}odory differential equation
\begin{equation}\label{cara}
\dot y(t)=g(t, y(t))
\end{equation}
on $[0,\tau]$ if and only if $y$ is absolutely continuous and \eqref{cara} is satisfied a.e.\ in $[0,\tau]$.
The following existence and uniqueness result holds.
\begin{theorem}\label{cara2}
Consider an interval $[0,T]$ on the real line, a domain $\Om \subset \R^n$, $n\ge 1$, and a Carath{\'e}odory function $g\colon[0,T]\times \Om \to \R^n$. Assume that there exists a function $m \in L^1((0,T))$ such that
$$
|g(t,y)|\le m(t)
$$
for a.e.\ $t \in [0,T]$ and every $y \in \Om$. Then, given $y_0 \in \Om$, there exists $0<\tau \le T$ and a solution $y(t)$ of \eqref{cara} on $[0,\tau]$ satisfying $y(0)=y_0$. 

If in addition there exists a function $l \in L^1((0,T))$ such that
\begin{equation}\label{cara3}
|g(t,y_1)-g(t, y_2)|\le l(t)|y_1-y_2|
\end{equation}
for a.e.\ $t \in [0,T]$ and every $y_1$, $y_2 \in \Om$, the solution is uniquely determined on $[0,\tau]$ by the initial condition $y_0$.
\end{theorem}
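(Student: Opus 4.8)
The plan is to reduce \eqref{cara} to the integral equation
$$
y(t) = y_0 + \int_0^t g(s, y(s))\,ds, \qquad t \in [0,\tau],
$$
and to produce a solution on a suitably short interval by a compactness/fixed-point argument. First I would fix $r>0$ so that the closed ball $\overline{B}(y_0,r)$ lies in $\Om$, and then choose $0<\tau\le T$ so small that $\int_0^\tau m(s)\,ds \le r$; this guarantees that any curve $y$ with $|y(t)-y_0|\le \int_0^t m(s)\,ds$ stays in $\Om$, so that the composition $g(s,y(s))$ is meaningful. A preliminary point, which is precisely where the Carath\'eodory structure (rather than joint continuity) enters, is that for every continuous $y\colon[0,\tau]\to\overline{B}(y_0,r)$ the map $s\mapsto g(s,y(s))$ is Lebesgue measurable: this follows by approximating $y$ uniformly by piecewise-constant curves $y_k$, for which $g(\cdot,y_k(\cdot))$ is measurable by measurability of $g(\cdot,\zeta)$, and then using continuity of $g(s,\cdot)$ together with dominated convergence; the bound $|g(s,y(s))|\le m(s)\in L^1$ makes the integral well defined.

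Next I would set up a fixed-point argument on $K := \{ y \in C([0,\tau];\R^n) : y(0)=y_0,\ |y(t)-y_0|\le \int_0^t m(s)\,ds \}$, which is convex and, by the Ascoli--Arzel\`a theorem, compact: the estimate $|y(t_2)-y(t_1)|\le \int_{t_1}^{t_2} m(s)\,ds$ valid for all $y\in K$ provides an equicontinuity modulus coming from the absolute continuity of $s\mapsto \int_0^s m$. The operator $\Phi(y)(t):=y_0+\int_0^t g(s,y(s))\,ds$ maps $K$ into itself by the growth bound, and is continuous on $K$, since $y_k\to y$ uniformly implies $g(s,y_k(s))\to g(s,y(s))$ for a.e.\ $s$ by continuity of $g(s,\cdot)$, and dominated convergence (domination by $m$) yields $\Phi(y_k)\to\Phi(y)$ uniformly. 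Schauder's fixed point theorem then gives $y\in K$ with $\Phi(y)=y$; such a $y$ is absolutely continuous and solves \eqref{cara} a.e.\ on $[0,\tau]$ with $y(0)=y_0$. (Alternatively one could build Euler polygons, use the same equi-absolute-continuity to extract a uniformly convergent subsequence, and pass to the limit in the integral equation; the analytic content is identical.)

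For uniqueness I would argue directly via Gr\"onwall's inequality: if $y_1,y_2$ are two solutions on $[0,\tau]$ with $y_1(0)=y_2(0)=y_0$, subtracting the integral equations and using \eqref{cara3} gives
$$
|y_1(t)-y_2(t)| \le \int_0^t l(s)\,|y_1(s)-y_2(s)|\,ds, \qquad t\in[0,\tau],
$$
and since $t\mapsto|y_1(t)-y_2(t)|$ is bounded and $l\in L^1(0,T)$, the integral form of Gr\"onwall's lemma forces $y_1\equiv y_2$ on $[0,\tau]$.

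The only genuinely delicate step, in my view, is the measurability-and-limit-passage point for the composition $s\mapsto g(s,y(s))$: verifying that a Carath\'eodory $g$ composed with a continuous (or merely measurable) curve stays measurable, and that the $L^1$-domination licenses passing the limit inside the integral when proving continuity of $\Phi$. Everything else — the choice of $\tau$ keeping the approximants inside $\Om$, the Ascoli compactness of $K$, the Schauder argument, and the Gr\"onwall uniqueness — is routine.
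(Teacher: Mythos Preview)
Your argument is correct and complete. The paper itself does not give a proof of this statement: it simply refers the reader to Filippov's monograph (Chapter~1, Theorems~1 and~2), treating the result as classical background. What you have written is essentially the standard textbook proof one finds there---reduction to the integral equation, construction of a compact convex set via the $L^1$ modulus $\int_0^\tau m$, Schauder's fixed point (or equivalently Euler polygons plus Ascoli--Arzel\`a), and Gr\"onwall for uniqueness---so your approach and the cited reference are in fact the same in substance; you have just spelled out what the paper leaves implicit.
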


\begin{proof}
See, for instance, \cite[Chapter 1, Theorems 1 and 2]{Fil}.
\end{proof}

Also the global existence theorem and a Gronwall estimate on the solutions can be easily generalized to this setting.

\begin{theorem}\label{cara-global}
Consider an interval $[0,T]$ on the real line and a Carath{\'e}odory function $g\colon[0,T]\times \R^n \to \R^n$. Assume that there exists a function $m \in L^1((0,T))$ such that
\begin{equation}\label{ttz}
|g(t,y)|\le m(t)(1+|y|)
\end{equation}
for a.e.\ $t \in [0,T]$ and every $y \in \R^n$. Then, given $y_0 \in \R^n$, there exists a solution $y(t)$ of \eqref{cara} defined on the whole interval $[0,T]$ which satisfies $y(0)=y_0$. Any solution satisfies
\begin{equation}\label{gron}
|y(t)|\le \Big(|y_0|+ \int_0^t m(s)\,ds\Big) \,e^{\int_0^t m(s)\,ds}
\end{equation}
for every $t \in [0,T]$.

If in addition, for every relatively compact open subset of $\R^n$, \eqref{cara3} holds, the solution is uniquely determined on $[0,T]$ by the initial condition $y_0$.
\end{theorem}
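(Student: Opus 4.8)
The plan is to obtain the a priori Gronwall bound first, then use it to upgrade the local existence of Theorem~\ref{cara2} to global existence by a truncation-and-continuation argument, and finally to derive uniqueness by a standard Gronwall comparison. For the a priori bound I would take any solution $y$ of \eqref{cara} on a subinterval $[0,\tau]$ with $y(0)=y_0$; integrating \eqref{cara} and using the linear growth \eqref{ttz} gives, for $u(t):=|y(t)|$,
\[
u(t)\le |y_0|+\int_0^t m(s)\,ds+\int_0^t m(s)\,u(s)\,ds .
\]
Since $\alpha(t):=|y_0|+\int_0^t m(s)\,ds$ is nondecreasing, the integral form of Gronwall's inequality yields $u(t)\le \alpha(t)\exp\!\big(\int_0^t m(s)\,ds\big)$, which is exactly \eqref{gron}. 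This bound holds on whatever interval a solution is defined, and in particular it produces a single constant $R:=\big(|y_0|+\int_0^T m(s)\,ds\big)\exp\!\big(\int_0^T m(s)\,ds\big)$ with $|y(t)|\le R$ on the entire interval of existence of any solution.

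For global existence I would introduce the radial truncation $\tilde g(t,y):=g(t,y)$ for $|y|\le R$ and $\tilde g(t,y):=g\big(t,\,R\,y/|y|\big)$ for $|y|>R$, i.e.\ $g$ composed with the nearest-point projection onto $\bar B(0,R)$. Then $\tilde g$ is again Carath\'eodory and satisfies the uniform bound $|\tilde g(t,y)|\le m(t)(1+R)=:\tilde m(t)\in L^1(0,T)$, while it still obeys the linear growth \eqref{ttz} (for $|y|>R$ one has $R\le|y|$, so $1+R\le 1+|y|$). The globally integrable bound $\tilde m$ is precisely the hypothesis of Theorem~\ref{cara2}, which, applied on translated subintervals, enables a continuation argument: let $\tau^*$ be the supremum of times up to which a solution of $\dot{\tilde y}=\tilde g(t,\tilde y)$ with $\tilde y(0)=y_0$ exists. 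Local existence gives $\tau^*>0$; if $\tau^*<T$, the estimate $|\tilde y(t)-\tilde y(s)|\le\int_s^t \tilde m$ forces the existence of a left limit of $\tilde y$ at $\tau^*$, and Theorem~\ref{cara2} on $[\tau^*,T]$ with that limit as initial datum extends the solution beyond $\tau^*$, contradicting maximality; hence $\tau^*=T$. Because $\tilde g$ also satisfies \eqref{ttz}, the a priori estimate of the first step applies to $\tilde y$ and gives $|\tilde y(t)|\le R$ throughout, so $\tilde y$ never enters the truncation region; there $\tilde g=g$ along its graph, and $\tilde y$ is a genuine solution of \eqref{cara} on all of $[0,T]$ satisfying \eqref{gron}.

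Finally, for uniqueness I would take two solutions $y_1,y_2$ on $[0,T]$; both are continuous on a compact interval and thus contained in some ball $\bar B(0,R')$, on which the local Lipschitz hypothesis \eqref{cara3} furnishes an $l\in L^1(0,T)$. Subtracting the integral equations and using \eqref{cara3} gives $|y_1(t)-y_2(t)|\le\int_0^t l(s)\,|y_1(s)-y_2(s)|\,ds$, whence Gronwall's inequality with zero additive term forces $y_1\equiv y_2$. I expect the main obstacle to be the continuation step: carefully arguing that the maximal solution of the truncated equation reaches and includes $T$ (existence of the left limit at $\tau^*$ and the legitimacy of restarting Theorem~\ref{cara2} on the translated interval) is where the real content lies, whereas the two Gronwall computations are routine.
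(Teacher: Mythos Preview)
Your proof is correct and follows essentially the same strategy as the paper: obtain the a priori Gronwall bound \eqref{gron}, confine any solution to a fixed ball, and then use a continuation argument together with local existence (Theorem~\ref{cara2}) to reach $T$, with uniqueness coming from the local Lipschitz assumption. The only cosmetic differences are that the paper restricts $g$ to a large ball rather than radially truncating, and outsources the continuation step to \cite[Chapter~1, Theorem~4]{Fil} instead of carrying it out by hand as you do.
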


\begin{proof}
Let $C_0:= (|y_0|+ \int_0^T m(s)\,ds) \,e^{\int_0^T m(s)\,ds}$. Take a ball $\Om \subset \R^p$  centered at $0$ with radius strictly greater than $C_0$. Existence of a local solution defined on an interval $[0,\tau]$ and taking values in $\Om$ follows now easily from \eqref{ttz} and Theorem \ref{cara2}. Using \eqref{ttz}, any solution of \eqref{cara} with initial datum $y_0$ satisfies
$$
|y(t)|\le |y_0|+ \int_0^t m(s)\,ds+\int_0^t m(s)|y(s)|\,ds
$$
for every $t \in [0,\tau]$, therefore \eqref{gron} follows from Gronwall's Lemma. In particular the graph of a solution $y(t)$ cannot reach the boundary of $[0,T]\times \Om$ unless $\tau=T$, therefore existence of a global solution follows for instance from \cite[Chapter 1, Theorem 4]{Fil}. If \eqref{cara3} holds, uniqueness of the global solution follows from Theorem \ref{cara2}.
\end{proof}

The usual results on continuous dependence on the data hold also in this setting: in particular, we will use this Lemma, following from \eqref{gron} and the Gronwall inequality.

\begin{lemma}
Let $g_1$ and $g_2\colon[0,T]\times \R^n \to \R^n$ be Carath{\'e}odory functions both satisfying \eqref{ttz} for a  function $m \in L^1(0,T)$. Let $r>0$ and define 
$$
\rho_{r, m, T}:=\Big(r+ \int_0^T m(s)\,ds\Big) \,e^{\int_0^T m(s)\,ds}\,.
$$ 
Assume in addition that there exists a function $l\in L^1(0,T)$
$$
|g_1(t, y_1)-g_1(t, y_2)|\le l(t)|y_1-y_2|
$$
for every $t \in [0, T]$ and every $y_1$, $y_2$ such that $|y_i|\le \rho_{r, m, T}$, $i=1,2$.
Set
$$
q(t):=\|g_1(t, \cdot)-g_2(t, \cdot)\|_{L^\infty(B(0, \rho_{r, m, T}))}\,.
$$
Then, if $\dot y_1(t)=g(t, y_1(t))$, $\dot y_2(t)=g_2(t, y_2(t))$, $|y_1(0)|\le r$ and $|y_2(0)|\le r$, one has
\begin{equation}\label{gronvalla}
|y_1(t)-y_2(t)|\le e^{\int_0^t l(s)\,ds}|y_1(0)-y_2(0)|+\int_0^t e^{\int_s^t l(\sigma)\,d\sigma}q(s)\,ds
\end{equation}
for every $t \in [0, T]$.
\end{lemma}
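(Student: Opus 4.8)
The plan is to reduce the vectorial estimate to a scalar linear Gronwall inequality for the norm $\phi(t):=|y_1(t)-y_2(t)|$. The crucial preliminary step, and the only place where the radius $\rho_{r,m,T}$ plays a role, is to confine both trajectories. Since $|y_1(0)|\le r$ and $|y_2(0)|\le r$ and both $g_1$ and $g_2$ satisfy the linear growth bound \eqref{ttz} with the same $m \in L^1(0,T)$, the a priori estimate \eqref{gron} of Theorem \ref{cara-global} applies to each solution and yields $|y_i(t)|\le \rho_{r,m,T}$ for every $t \in [0,T]$ and $i=1,2$. This is precisely what will later allow me to invoke the Lipschitz bound on $g_1$ and the definition of $q(t)$, both of which are assumed to hold only on the ball $B(0,\rho_{r,m,T})$.

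Next I would derive a pointwise differential inequality for $\phi$. Since $y_1$ and $y_2$ are absolutely continuous, so is $w:=y_1-y_2$, and hence $\phi=|w|$ is absolutely continuous with $\phi'(t)\le |\dot w(t)|$ for a.e.\ $t \in [0,T]$. Adding and subtracting $g_1(t,y_2(t))$ gives
\[
|\dot w(t)| = |g_1(t,y_1(t))-g_2(t,y_2(t))| \le |g_1(t,y_1(t))-g_1(t,y_2(t))| + |g_1(t,y_2(t))-g_2(t,y_2(t))|.
\]
By the confinement, both $y_1(t)$ and $y_2(t)$ lie in $B(0,\rho_{r,m,T})$, so the first term is bounded by $l(t)\phi(t)$ using the Lipschitz hypothesis on $g_1$, while the second is bounded by $q(t)$ by the very definition of $q$. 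This produces $\phi'(t)\le l(t)\phi(t)+q(t)$ for a.e.\ $t\in [0,T]$.

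Finally I would integrate this inequality by means of the integrating factor $e^{-\int_0^t l(s)\,ds}$. Multiplying through gives $\frac{d}{dt}\bigl(e^{-\int_0^t l(s)\,ds}\phi(t)\bigr)\le e^{-\int_0^t l(s)\,ds}q(t)$ a.e.; integrating from $0$ to $t$ and multiplying back by $e^{\int_0^t l(s)\,ds}$ yields exactly \eqref{gronvalla}. I expect the only genuine point requiring care to be the confinement step: the estimate is vacuous unless both trajectories are known a priori to remain in the ball on which $l$ controls the oscillation of $g_1$ and $q$ measures the discrepancy between $g_1$ and $g_2$; everything afterwards is the classical Gronwall computation. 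As an alternative to differentiating the norm, one could instead integrate the bound to obtain $\phi(t)\le \phi(0)+\int_0^t\bigl(l(s)\phi(s)+q(s)\bigr)\,ds$ and apply Gronwall's lemma in its integral form, which sidesteps any discussion of the differentiability of $|w|$.
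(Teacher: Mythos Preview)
Your proof is correct and is precisely the argument the paper has in mind: the paper does not spell out a proof of this lemma, merely stating that it follows from the a priori bound \eqref{gron} and the Gronwall inequality, and you have supplied exactly those two ingredients in the right order (confinement via \eqref{gron}, then the splitting $g_1(t,y_1)-g_2(t,y_2)=\bigl(g_1(t,y_1)-g_1(t,y_2)\bigr)+\bigl(g_1(t,y_2)-g_2(t,y_2)\bigr)$ and Gronwall). Your closing remark that the integral form of Gronwall sidesteps the differentiability of $|w|$ is a sensible precaution, though the differential version is also fine here.
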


We will need the following Lemma. In its statement, we recall that $\PP(\R^n)$ denotes the space of probability measures on $\R^n$ with finite first moment. This is a metric space when endowed with the Wasserstein distance $\WW$.

\begin{lemma}\label{stimesceme}
Let $H\colon \R^n \to \R^{p}$, $n\ge p \ge 1$ be a locally Lipschitz function such that
\begin{equation}\label{hypo}
|H(y)|\le C(1+|y|), \quad \mbox{ for all } y \in \mathbb R^n, 
\end{equation}
and $\mu\colon[0,T]\to \PP(\R^n)$ be a continuous map with respect to $\WW$. Then there exists a constant $C'$ such that
\begin{equation}\label{trzu}
|H\star \mu(t) (y)|\le C'(1+|y|),
\end{equation}
for every $t \in [0, T]$ and every $y \in \R^n$. Furthermore, if
\begin{equation}\label{hypo2}
{\rm supp }\,\mu(t)\subset B(0,R),
\end{equation}
for every $t \in [0, T]$, then for every compact subset $K$ of $\R^n$ there exists a constant $L_{R,K}$ such that
\begin{equation}\label{lipt}
|H\star \mu(t) (y_1)-H\star \mu(t) (y_2)|\le L_{R,K}|y_1-y_2|,
\end{equation}
for every $t \in [0, T]$ and every $y_1$, $y_2 \in K$.
\end{lemma}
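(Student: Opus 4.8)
The plan is to prove the two assertions independently, each reducing to a one-line estimate obtained by moving the absolute value (or a Lipschitz difference) inside the convolution integral $H\star\mu(t)(y)=\int_{\R^n} H(y-z)\,d\mu(t)(z)$.

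For \eqref{trzu}, I would first invoke the sublinear growth \eqref{hypo}: for every $t\in[0,T]$ and $y\in\R^n$,
$$
|H\star\mu(t)(y)|\le\int_{\R^n}|H(y-z)|\,d\mu(t)(z)\le C\Big(1+|y|+\int_{\R^n}|z|\,d\mu(t)(z)\Big),
$$
which in particular shows the integral is finite since $\mu(t)\in\PP(\R^n)$. So the whole point is a uniform bound on the first moments $t\mapsto\int_{\R^n}|z|\,d\mu(t)(z)$. This is exactly where the continuity of $t\mapsto\mu(t)$ in $\WW$ enters: testing the definition \eqref{mkrdist} against the $1$-Lipschitz function $z\mapsto|z|$ gives
$$
\Big|\int_{\R^n}|z|\,d\mu(t)(z)-\int_{\R^n}|z|\,d\mu(0)(z)\Big|\le\WW(\mu(t),\mu(0)),
$$
and $t\mapsto\WW(\mu(t),\mu(0))$, being continuous on the compact interval $[0,T]$, is bounded by some $M_0$. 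Since $\mu(0)\in\PP(\R^n)$ has finite first moment $m_0$, the first moments are bounded by $m_0+M_0$ for all $t$, and \eqref{trzu} follows with $C':=C(1+m_0+M_0)$.

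For \eqref{lipt} I would add the compact support hypothesis \eqref{hypo2} and fix a compact set $K\subset\R^n$. If $y_1,y_2\in K$ and $z\in B(0,R)$, then $y_1-z$ and $y_2-z$ both lie in the \emph{fixed} compact set $K':=\overline{K}+\overline{B(0,R)}$; since $H$ is locally Lipschitz, it admits a finite Lipschitz constant $L_{R,K}$ on $K'$. Hence
$$
|H\star\mu(t)(y_1)-H\star\mu(t)(y_2)|\le\int_{B(0,R)}|H(y_1-z)-H(y_2-z)|\,d\mu(t)(z)\le L_{R,K}|y_1-y_2|,
$$
the last step using that $\mu(t)$ is a probability measure supported in $B(0,R)$, which is \eqref{lipt}.

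I do not foresee a genuine obstacle: both estimates are routine. The only slightly delicate point is the uniform first-moment control in \eqref{trzu}, and continuity of $\mu$ in $\WW$ over a compact time interval is precisely the hypothesis that supplies it; without it one could not rule out the first moments of $\mu(t)$ blowing up as $t$ varies.
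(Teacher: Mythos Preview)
Your proof is correct and follows essentially the same route as the paper's: move the absolute value inside the convolution, invoke \eqref{hypo}, and use continuity of $t\mapsto\mu(t)$ in $\WW$ on the compact interval $[0,T]$ to bound the first moments uniformly; then for \eqref{lipt} use that $y_i-z$ ranges over a fixed compact set where $H$ has a finite Lipschitz constant. In fact you spell out the uniform first-moment bound more explicitly than the paper, which simply asserts it as a consequence of the continuity assumption.
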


\begin{proof}
One has from \eqref{hypo}
$$
|H\star \mu(t) (y)|=\Big|\int_{\R^n}H(y-\xi)\,d\mu(t)(\xi)\Big|\le C(1+|y|)+C\int_{\R^n}|\xi|\,d\mu(t)(\xi)\,;
$$
since $\int_{\R^n}|\xi|\,d\mu(t)(\xi)$ is uniformly bounded on $[0,T]$ by our continuity assumption, \eqref{trzu} follows.

If \eqref{hypo2} holds, then for every $y_1,y_2 \in K$ one has
$$
|H\star \mu(t) (y_1)-H\star \mu(t) (y_2)|\le \int_{B(0,R)} \left |H(y_1-\xi) - H(y_2-\xi) \right |\,d\mu(t)(\xi) \leq L_{R,K} |y_1-y_2|.
$$
\end{proof}

We now fix a dimension $d \ge 1$ and consider the system of ODE's on $\R^{2d}$
\begin{equation}\label{system}
\begin{cases}
\dot X(t)=V(t)\\
\dot V(t)= H\star \mu(t) (X(t), V(t))+ f(t, X(t), V(t))
\end{cases}
\end{equation}
on an interval $[0, T]$. Here $X, V$ are both mappings from $[0, T]$ to $\R^d$, $H\colon \R^{2d} \to \R^{d}$ is a locally Lipschitz function satisfying \eqref{lingrowth}, $\mu\colon[0,T]\to \PP(\R^n)$ is a continuous map with respect to $\WW$ satisfying \eqref{hypo2} and $f$ belongs to the class ${\mathcal F}_\ell$ defined in \eqref{def2} for a fixed function $\ell \in L^1(0,T)$. In particular, we have
\begin{equation}\label{terr}
|f(t, X, V)|\le \ell(t)(1+|(X, V)|)
\end{equation}
for every $V \in \R^d$. It follows then from these assumptions and Lemma \ref{stimesceme} that all the hypothesis of Theorem \ref{cara-global} are satisfied. Therefore, however given $P_0:=(X_0, V_0)$ in $\R^{2d}$ there exists a unique solution $P(t):=(X(t), V(t))$ to \eqref{system} with initial datum $P_0$ defined on the whole interval $[0, T]$. We can therefore consider the family of flow maps ${\mathcal T}^\mu_t \colon \R^{2d} \to \R^{2d}$ indexed by $t\in [0,T]$ and defined by
\begin{equation}\label{definitflow}
{\mathcal T}^\mu_t(P_0):=P(t)
\end{equation}
where $P(t)$ is the value of the unique solution to \eqref{system} starting from $P_0$ at time $t=0$. The notation aims also at stressing the dependence of these flow maps on the given mapping $\mu(t)$. 
We can easily recover, as consequence of \eqref{gronvalla}, similar estimates as in \cite[Lemmas 3.7 and 3.8]{CanCarRos10}: we report the statement and a sketch of the proof of this result to allow the reader to keep track of the dependence of these constants on the data of the problem.

\begin{lemma}
Let $H\colon \R^{2d} \to \R^{d}$ be a locally Lipschitz function satisfying \eqref{lingrowth}, let $\mu\colon[0,T]\to \PP(\R^{2d})$ and $\nu\colon[0,T]\to \PP(\R^{2d})$ be continuous maps with respect to $\WW$ both satisfying 
\begin{equation}\label{hypo2+}
{\rm supp }\,\mu(t)\subset B(0,R)\quad\hbox{and}\quad {\rm supp }\,\nu(t)\subset B(0,R)
\end{equation}
for every $t \in [0, T]$.
Consider $f$ belonging to the class ${\mathcal F}_\ell$ introduced in Definition \label{class}, for a fixed function $\ell \in L^1(0,T)$, and the flow maps ${\mathcal T}^\mu_t$ and ${\mathcal T}^\nu_t$ associated to the system \eqref{system} and to the system
\begin{equation}\label{system+}
\begin{cases}
\dot X(t)=V(t)\\
\dot V(t)= H\star \nu(t) (X(t), V(t))+ f(t, X(t), V(t))\,,
\end{cases}
\end{equation}
respectively, on $[0, T]$. Let $C$ be the constant in \eqref{lingrowth}. Fix $r>0$: then there exist a constant $\rho$ and a function $l\in L^1(0,T)$, both depending only on $r$, $C$, $R$,  $\ell$, and $T$ such that
\begin{equation}\label{contflow}
|{\mathcal T}^\mu_t(P_1)-{\mathcal T}^\nu_t(P_2)|\le e^{\int_0^t l(s)\,ds}|P_1-P_2|+ \int_0^t e^{\int_s^t l(\sigma)\,d\sigma}\|H\star \mu(s)-H\star \nu(s)\|_{L^\infty(B(0,\rho))}\,ds
\end{equation}
whenever $|P_1|\le r$ and $|P_2|\le r$, for every $t \in [0, T]$.
\end{lemma}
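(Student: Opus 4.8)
The plan is to read this off from the continuous dependence estimate \eqref{gronvalla} stated above, applied to the two vector fields driving \eqref{system} and \eqref{system+}. Writing $P=(X,V)\in\R^{2d}$ and
$$
g_1(t,P):=\big(V,\ H\star\mu(t)(X,V)+f(t,X,V)\big),\qquad g_2(t,P):=\big(V,\ H\star\nu(t)(X,V)+f(t,X,V)\big),
$$
the maps $t\mapsto\mathcal T^\mu_t(P_1)$ and $t\mapsto\mathcal T^\nu_t(P_2)$ are precisely the global solutions of $\dot y=g_1(t,y)$ and $\dot y=g_2(t,y)$ with initial data $P_1$, $P_2$. First I would record that $g_1,g_2$ are Carath\'eodory: continuity in $P$ is clear, while measurability in $t$ of $H\star\mu(t)(\cdot)$ and $H\star\nu(t)(\cdot)$ follows from the $\WW$-continuity of $\mu,\nu$ together with the continuity of $H$.

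The next step is to fix the ball on which the trajectories live. By \eqref{trzu} of Lemma \ref{stimesceme} --- whose constant, thanks to the common support bound \eqref{hypo2+}, depends only on $C$ and $R$ --- and by \eqref{terr}, one obtains $|g_i(t,P)|\le m(t)(1+|P|)$ with $m(t)=1+C'+\ell(t)\in L^1(0,T)$ depending only on $C$, $R$, $\ell$. Then the Gronwall bound \eqref{gron} of Theorem \ref{cara-global} guarantees that, whenever $|P_i|\le r$, the corresponding solutions remain in $B(0,\rho)$ on all of $[0,T]$, where $\rho:=\big(r+\int_0^T m(s)\,ds\big)e^{\int_0^T m(s)\,ds}$ depends only on $r$, $C$, $R$, $\ell$, $T$; this is exactly the quantity $\rho_{r,m,T}$ appearing in \eqref{gronvalla}. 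On the closed ball $\overline{B(0,\rho)}$ I would then check that $g_1(t,\cdot)$ is Lipschitz with constant $l(t)=1+L_{R,\overline{B(0,\rho)}}+\ell(t)\in L^1(0,T)$, by splitting into the trivially $1$-Lipschitz block $P\mapsto V$, the term $f(t,\cdot)$ whose Lipschitz constant is $\le\ell(t)$ by Definition \ref{def:admcontr}, and the term $H\star\mu(t)(\cdot)$, Lipschitz on $\overline{B(0,\rho)}$ by \eqref{lipt} with a constant depending only on $R$ and $\rho$. Finally, since $g_1(t,P)-g_2(t,P)=\big(0,\ H\star\mu(t)(X,V)-H\star\nu(t)(X,V)\big)$, the discrepancy function of \eqref{gronvalla} is $q(t)=\|H\star\mu(t)-H\star\nu(t)\|_{L^\infty(B(0,\rho))}$, and plugging $y_1(t)=\mathcal T^\mu_t(P_1)$, $y_2(t)=\mathcal T^\nu_t(P_2)$ into \eqref{gronvalla} yields \eqref{contflow} verbatim.

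There is no deep obstacle here: the growth and Lipschitz estimates for $g_1,g_2$ are routine. The only point demanding genuine care --- and the reason the hypotheses are phrased the way they are --- is the uniformity of the constants: one must ensure that the constant $C'$ in \eqref{trzu} and the constant $L_{R,\overline{B(0,\rho)}}$ in \eqref{lipt} are independent of the particular measures $\mu,\nu$, which is exactly what the equi-confinement \eqref{hypo2+} provides, so that $\rho$ and $l$ in the end depend only on $r$, $C$, $R$, $\ell$, $T$ as asserted.
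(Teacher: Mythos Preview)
Your proof is correct and follows essentially the same route as the paper: define $g_1,g_2$, use \eqref{trzu} and \eqref{terr} to get a common linear-growth bound $m(t)=1+C'+\ell(t)$, set $\rho=\rho_{r,m,T}$ via \eqref{gron}, bound the Lipschitz constant of $g_1(t,\cdot)$ on $B(0,\rho)$ using \eqref{lipt} and the definition of $\mathcal F_\ell$, identify $q(t)$, and invoke \eqref{gronvalla}. Your explicit bookkeeping of the constants and the closing remark on why \eqref{hypo2+} is needed for uniformity are, if anything, slightly more detailed than the paper's own argument.
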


\begin{proof}
Let $g_1$ and $g_2\colon [0,T]\times \R^{2d}\to \R^{2d}$  be the right-hand sides of \eqref{system}, and \eqref{system+}, respectively. As in \eqref{trzu} we can find a constant $C'$ which depends only on $C$ and $R$ such that
\begin{equation}\label{trzu+}
|H\star \mu(t) (P)|\le C'(1+|P|)\quad\hbox{and}\quad|H\star \nu(t) (P)|\le C'(1+|P|)
\end{equation}
for every $t \in [0, T]$ and every $P \in \R^{2d}$. Setting now $\hat m(t)=1+C'+\ell(t)$ and also using \eqref{terr}, it follows that $g_1$ and $g_2$ both satisfy \eqref{ttz} with $m(t)$ replaced by $\hat m(t)$. Therefore, for every $P_1$ and $P_2 \in \R^{2d}$ such that $|P_i|\le r$, $i=1,2$ and every $t\in [0,T]$, \eqref{gron} gives
$$
|{\mathcal T}^\mu_t(P_1)|\le \Big(r+ \int_0^T \hat m(s)\,ds\Big) \,e^{\int_0^T \hat m(s)\,ds}\quad\hbox{and}\quad|{\mathcal T}^\nu_t(P_2)|\le \Big(r+ \int_0^T \hat m(s)\,ds\Big) \,e^{\int_0^T \hat m(s)\,ds}\,.
$$
Set $\rho:=\Big(r+ \int_0^T \hat m(s)\,ds\Big) \,e^{\int_0^T \hat m(s)\,ds}$. Now, obviously
$$
\|g_1(t,\cdot)-g_2(t,\cdot)\|_{L^\infty(B(0,\rho))}=\|H\star \mu(t)-H\star \nu(t)\|_{L^\infty(B(0,\rho))}
$$
for every $t \in [0, T]$. Furthermore, by \eqref{lipt}, the definition of $\rho$, and since $f$ belongs to ${\mathcal F}_\ell$, the Lipschitz constant of $g_1(t, \cdot)$  on $B(0,\rho)$ can be estimated for a.e. \ $t \in [0, T]$  with a function $l(t)\in L^1(0,T)$ only depending on $\ell(t)$, $R$, $C$, $r$ and $T$. With this, the conclusion follows at once from \eqref{gronvalla}.
\end{proof}

We will use \eqref{pushf} to prove uniqueness and stability of equi-compactly supported solutions of \eqref{solution}. We recall the following two Lemmata, both proved in \cite{CanCarRos10}.

\begin{lemma}\label{primstim}
Let $E_1$ and $E_2 \colon \Rn \to \Rn$ be two bounded Borel measurable functions. Then, for every $\mu \in \PP(\Rn)$ one has
\begin{equation*}
\WW((E_1)_\sharp \mu, (E_2)_\sharp \mu) \le \|E_1-E_2\|_{L^\infty({\rm supp}\,\mu)}\,.
\end{equation*}
If in addition $E_1$ is locally Lipschitz continuous, and $\mu$, $\nu \in \PP(\Rn)$ are both compactly supported on a ball $B_r$ of $\Rn$, then
\begin{equation}\label{transport}
\WW((E_1)_\sharp \mu, (E_1)_\sharp \nu) \le L_r \WW(\mu, \nu)\,,
\end{equation}
where $L_r$ is the Lipschitz constant of $E_1$ on $B_r$.
\end{lemma}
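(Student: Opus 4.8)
The plan is to argue directly from the transference--plan representation \eqref{wasserstein} of $\WW$, producing in each case an explicit (in general non--optimal) plan between the two push--forward measures by moving mass along the graphs of $E_1$ and $E_2$. One first checks that all the quantities are well defined: $(E_1)_\sharp\mu$ and $(E_2)_\sharp\mu$ are compactly supported because $E_1,E_2$ are bounded, while in the second part $(E_1)_\sharp\mu$ and $(E_1)_\sharp\nu$ are supported on the compact set $E_1(\supp\,\mu)$ since $E_1$ is continuous; hence these measures lie in $\PP(\Rn)$.

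For the first inequality I would set $\Phi:=(E_1,E_2)\colon\Rn\to\Rn\times\Rn$, $\Phi(z):=(E_1(z),E_2(z))$, which is Borel measurable since its components are, and take $\pi:=\Phi_\sharp\mu\in\PP(\Rn\times\Rn)$. Since $\mathrm{pr}_1\circ\Phi=E_1$ and $\mathrm{pr}_2\circ\Phi=E_2$, the marginals of $\pi$ are $(E_1)_\sharp\mu$ and $(E_2)_\sharp\mu$, so $\pi\in\Pi((E_1)_\sharp\mu,(E_2)_\sharp\mu)$. Using $\pi$ as a competitor in the infimum \eqref{wasserstein} and changing variables,
$$
\WW((E_1)_\sharp\mu,(E_2)_\sharp\mu)\le\int_{\Rn\times\Rn}|x-y|\,d\pi(x,y)=\int_{\Rn}|E_1(z)-E_2(z)|\,d\mu(z)\le\|E_1-E_2\|_{L^\infty(\supp\,\mu)},
$$
the last step because $\mu$ is concentrated on $\supp\,\mu$.

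For the second inequality I would fix an optimal plan $\pi\in\Pi(\mu,\nu)$ realizing $\WW(\mu,\nu)$, whose existence is classical (see \cite{AGS,vi09}); since $\mu$ and $\nu$ are concentrated on $B_r$, the plan $\pi$ is concentrated on $B_r\times B_r$. Pushing $\pi$ forward through the Borel map $(x,y)\mapsto(E_1(x),E_1(y))$ gives $\tilde\pi:=(E_1\times E_1)_\sharp\pi\in\Pi((E_1)_\sharp\mu,(E_1)_\sharp\nu)$ by the same marginal computation, and then, using that $E_1$ is $L_r$--Lipschitz on $B_r$ and that $\pi$--a.e.\ $(x,y)$ lies in $B_r\times B_r$,
$$
\WW((E_1)_\sharp\mu,(E_1)_\sharp\nu)\le\int_{\Rn\times\Rn}|x-y|\,d\tilde\pi(x,y)=\int_{\Rn\times\Rn}|E_1(x)-E_1(y)|\,d\pi(x,y)\le L_r\int_{\Rn\times\Rn}|x-y|\,d\pi(x,y)=L_r\,\WW(\mu,\nu).
$$

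I do not anticipate a genuine obstacle here: the proof is a routine application of \eqref{wasserstein}, and the only points requiring (minimal) care are the Borel measurability of the product maps, which is immediate, and the existence of an optimal transference plan in the second part, which is standard. As an alternative for the first inequality one could invoke the dual form \eqref{mkrdist} instead, noting that for any $1$--Lipschitz $\varphi$ one has $\int\varphi\,d((E_1)_\sharp\mu-(E_2)_\sharp\mu)=\int(\varphi\circ E_1-\varphi\circ E_2)\,d\mu$ and $|\varphi\circ E_1-\varphi\circ E_2|\le|E_1-E_2|$ on $\supp\,\mu$; but the plan--based argument is slightly cleaner and avoids having to control $E_1-E_2$ off $\supp\,\mu$.
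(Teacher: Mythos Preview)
Your proof is correct and, for the inequality \eqref{transport}, follows exactly the paper's argument: take an optimal plan $\pi\in\Pi(\mu,\nu)$, push it forward by $E_1\times E_1$ to get a competitor in $\Pi((E_1)_\sharp\mu,(E_1)_\sharp\nu)$, and use the local Lipschitz bound on $B_r$. The paper does not actually write out the first inequality (it simply cites \cite{CanCarRos10}); your argument via the plan $(E_1,E_2)_\sharp\mu$ is the standard one and is fine.
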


\begin{proof}
For the sake of the reader, we sketch only the proof of \eqref{transport} as it does not appear exactly equally reported in \cite[Lemmata 3.11 and 3.15]{CanCarRos10}. Let $\tilde \pi$ be the optimal transfer plan for $(E_1)_\sharp \mu$ and 
$(E_1)_\sharp \nu$ and $\pi$ the one of $\mu$ and $\nu$. Then
\begin{eqnarray*}
&& \mathcal W_1((E_1)_\sharp \mu, (E_1)_\sharp \nu) = \int_{\mathbb R^n \times \mathbb R^n} |x-y| d\tilde \pi(x,y) \leq \int_{\mathbb R^n \times \mathbb R^n} |x-y| d((E_1 \times E_1)_\sharp ¸\pi)(x,y) \\
&=& \int_{\mathbb R^n \times \mathbb R^n} |E_1(x)-E_1(y)| d \pi(x,y) = L_r \int_{\mathbb R^n \times \mathbb R^n} |x-y| d \pi(x,y) =L_r \mathcal W_1(\mu,\nu).
\end{eqnarray*}
\end{proof}

\begin{lemma}\label{secstim}
Let $H\colon \R^{2d} \to \R^{d}$ be a locally Lipschitz function satisfying \eqref{lingrowth}, let $\mu\colon[0,T]\to \PP(\R^{2d})$ and $\nu\colon[0,T]\to \PP(\R^{2d})$ be continuous maps with respect to $\WW$ both satisfying 
\begin{equation*}
{\rm supp }\,\mu(t)\subset B(0,R)\quad\hbox{and}\quad {\rm supp }\,\nu(t)\subset B(0,R)
\end{equation*}
for every $t \in [0, T]$. Then for every $\rho >0$ there exists a constant $L_{\varrho, R}$ such that
$$
\|H\star \mu(t)-H\star \nu(t)\|_{L^\infty(B(0,\rho)}\le L_{\varrho, R}\WW(\mu(t), \nu(t))
$$
for every $t \in [0, T]$.
\end{lemma}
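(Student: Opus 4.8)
The plan is to use the support confinement hypothesis to replace the locally Lipschitz kernel $H$ by a genuinely Lipschitz one on a fixed ball that depends only on $\rho$ and $R$, and then to convert the difference of convolutions into the $1$-Wasserstein distance by means of an optimal transport plan (equivalently, via the Kantorovich--Rubinstein duality \eqref{mkrdist}). The uniformity in $t$ will be automatic, since the relevant ball and the corresponding Lipschitz constant do not depend on $t$.

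Concretely, I would first fix $t\in[0,T]$ and $\rho>0$, and set $\varrho:=\rho+R$. By local Lipschitzianity of $H$, let $L_{\varrho,R}$ be the Lipschitz constant of $H$ on the closed ball $\overline{B(0,\varrho)}\subset\R^{2d}$; this constant depends only on $\rho$ and $R$, and in particular is independent of $t$. Since $|y-\xi|\le\varrho$ and $|y-\eta|\le\varrho$ whenever $y\in B(0,\rho)$ and $\xi,\eta\in B(0,R)$, we have
\[
|H(y-\xi)-H(y-\eta)|\le L_{\varrho,R}\,|\xi-\eta|\qquad\text{for all } y\in B(0,\rho),\ \xi,\eta\in B(0,R).
\]
Next I would pick an optimal transference plan $\pi\in\Pi(\mu(t),\nu(t))$ realizing $\WW(\mu(t),\nu(t))$ as in \eqref{wasserstein}; by the support assumption on $\mu(t)$ and $\nu(t)$, its support is contained in $B(0,R)\times B(0,R)$. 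Expressing the two convolutions against the marginals of $\pi$, for every $y\in B(0,\rho)$ one gets
\[
H\star\mu(t)(y)-H\star\nu(t)(y)=\int_{\R^{2d}\times\R^{2d}}\big(H(y-\xi)-H(y-\eta)\big)\,d\pi(\xi,\eta),
\]
so that the Lipschitz estimate above yields
\[
|H\star\mu(t)(y)-H\star\nu(t)(y)|\le L_{\varrho,R}\int_{\R^{2d}\times\R^{2d}}|\xi-\eta|\,d\pi(\xi,\eta)=L_{\varrho,R}\,\WW(\mu(t),\nu(t)).
\]
Taking the supremum over $y\in B(0,\rho)$ gives the claimed inequality with constant $L_{\varrho,R}$. (Alternatively, one could apply \eqref{mkrdist} to the scalar $1$-Lipschitz functions $\xi\mapsto L_{\varrho,R}^{-1}\,e\cdot H(y-\xi)$ for unit vectors $e\in\R^d$ and take the supremum over $e$, which reproduces the same bound without invoking existence of an optimal plan.)

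There is essentially no serious obstacle: the estimate is a one-line consequence of optimal transport duality once the support constraint localizes $H$ to a ball on which it is Lipschitz. The only points deserving attention are that the Lipschitz constant must be taken on $B(0,\varrho)$ with $\varrho=\rho+R$, so as to accommodate both the evaluation point $y\in B(0,\rho)$ and the shift variable $\xi\in B(0,R)$, and the bookkeeping observation that $L_{\varrho,R}$ is independent of $t$, which is precisely what makes the resulting bound uniform on $[0,T]$.
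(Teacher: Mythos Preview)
Your proof is correct and is essentially the standard argument one would expect: localize $H$ to the ball $B(0,\rho+R)$ where it is genuinely Lipschitz, then use an optimal plan (or equivalently the Kantorovich--Rubinstein duality \eqref{mkrdist}) to convert the difference of convolutions into $\WW(\mu(t),\nu(t))$. The paper does not give its own proof here but merely refers to \cite[Lemma~4.7]{CanCarRos10}; the argument there is exactly of this type, so your approach coincides with the intended one.
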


\begin{proof}
See \cite[Lemma 4.7]{CanCarRos10}.
\end{proof}

With the previous Lemmata and \eqref{contflow}, we can easily prove the following result.
\begin{theorem}\label{uniq}
Fix a function  $f$ belonging to the class ${\mathcal F}_\ell$ for a given $\ell \in L^1(0,T)$. Consider a locally Lipschitz function $H\colon \R^{2d} \to \R^{d}$ satisfying \eqref{lingrowth} with a constant $C$. Fix $T>0$  and let $\mu(t)$ and $\nu(t)$ be two equi-compactly supported solutions  of \eqref{PDEmodXX} with forcing term $f$ on the interval $[0, T]$. Let $\mu_0:=\mu(0)$ and $\nu_0:=\nu(0)$. Consider $r>0$ such that
$$
{\rm supp}\,\mu_0 \subset B(0, r)\quad\hbox{and}\quad {\rm supp}\,\nu_0 \subset B(0, r)
$$
and $R>0$ such that
\begin{equation}\label{supptot}
{\rm supp}\,\mu(t) \subset B(0, R)\quad\hbox{and}\quad {\rm supp}\,\nu(t) \subset B(0, R)
\end{equation}
for every $t \in[0, T]$. Then, there exist a function $\delta \in L^1(0,T)$ depending only on $r$, $C$, $R$,  $\ell$, and $T$ such that
\begin{equation}\label{stab}
\WW(\mu(t), \nu(t)) \le e^{\int_0^t \delta(s)\,ds} \WW(\mu_0, \nu_0)
\end{equation}
for every $t \in [0, T]$. In particular, equi-compactly supported solutions of \eqref{solution} are uniquely determined by the initial datum.
\end{theorem}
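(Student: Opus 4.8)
The plan is to reduce the assertion to the flow contraction estimate \eqref{contflow} by means of the fixed-point representation \eqref{pushf}. Recall first that, by the Remark following Definition \ref{defz}, an equi-compactly supported solution $\mu(t)$ of \eqref{PDEmodXX} with forcing term $f$ satisfies $\mu(t)=({\mathcal T}^\mu_t)_\sharp\mu_0$, and likewise $\nu(t)=({\mathcal T}^\nu_t)_\sharp\nu_0$, where ${\mathcal T}^\mu_t$ and ${\mathcal T}^\nu_t$ are the global flow maps of the systems \eqref{system} and \eqref{system+}, well defined on $[0,T]$ by the Appendix results (Theorem \ref{cara-global} together with Lemma \ref{stimesceme} and \eqref{terr}). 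The hypotheses on the supports, namely \eqref{supptot} and ${\rm supp}\,\mu_0,\,{\rm supp}\,\nu_0\subset B(0,r)$, are exactly those demanded by \eqref{contflow}.

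Next I would pick $\pi_0\in\Pi(\mu_0,\nu_0)$ an optimal transference plan; it is concentrated on $B(0,r)\times B(0,r)$. Since $({\mathcal T}^\mu_t\times{\mathcal T}^\nu_t)_\sharp\pi_0$ is a transference plan between $({\mathcal T}^\mu_t)_\sharp\mu_0=\mu(t)$ and $({\mathcal T}^\nu_t)_\sharp\nu_0=\nu(t)$, one has
$$\WW(\mu(t),\nu(t))\le\int_{\R^{2d}\times\R^{2d}}|{\mathcal T}^\mu_t(P_1)-{\mathcal T}^\nu_t(P_2)|\,d\pi_0(P_1,P_2)\,.$$
Inserting \eqref{contflow} — which, for the fixed $r$, supplies a radius $\rho$ and a function $l\in L^1(0,T)$ depending only on $r,C,R,\ell,T$ — and integrating against $\pi_0$, whose $|P_1-P_2|$-integral is precisely $\WW(\mu_0,\nu_0)$, yields
$$\WW(\mu(t),\nu(t))\le e^{\int_0^t l(s)\,ds}\WW(\mu_0,\nu_0)+\int_0^t e^{\int_s^t l(\sigma)\,d\sigma}\,\|H\star\mu(s)-H\star\nu(s)\|_{L^\infty(B(0,\rho))}\,ds\,.$$
By Lemma \ref{secstim}, since both $\mu(s)$ and $\nu(s)$ are supported in $B(0,R)$, the last integrand is bounded by $L_{\rho,R}\,\WW(\mu(s),\nu(s))$.

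Writing $u(t):=\WW(\mu(t),\nu(t))$, this leaves the integral inequality $u(t)\le e^{\int_0^t l(s)\,ds}u(0)+L_{\rho,R}\int_0^t e^{\int_s^t l(\sigma)\,d\sigma}u(s)\,ds$. Setting $w(t):=u(t)\,e^{-\int_0^t l(s)\,ds}$, this becomes $w(t)\le w(0)+L_{\rho,R}\int_0^t w(s)\,ds$, so Gronwall's lemma gives $w(t)\le w(0)\,e^{L_{\rho,R}t}$, that is, \eqref{stab} with $\delta(s):=l(s)+L_{\rho,R}\in L^1(0,T)$, depending only on $r,C,R,\ell,T$. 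Uniqueness follows at once by taking $\mu_0=\nu_0$, so that $\WW(\mu(t),\nu(t))=0$ for all $t$. Since every ingredient is already provided by the earlier lemmata, there is no genuine obstacle; the only point requiring attention is the Gronwall bookkeeping with the time-dependent factor $l(t)$ — which is why one passes to $w$ instead of applying Gronwall to $u$ directly — and checking that the ball radius $\rho$ appearing in \eqref{contflow} is the same one fed into Lemma \ref{secstim}.
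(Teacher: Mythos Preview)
Your proof is correct and follows the same overall strategy as the paper: both exploit the push-forward representation \eqref{pushf}, the flow estimate \eqref{contflow}, Lemma~\ref{secstim}, and then close with the same Gronwall substitution $w(t)=u(t)e^{-\int_0^t l}$. The only difference is in how the Wasserstein distance $\WW(\mu(t),\nu(t))$ is first bounded. The paper inserts a triangle inequality,
\[
\WW(({\mathcal T}^\mu_t)_\sharp\mu_0,({\mathcal T}^\nu_t)_\sharp\nu_0)\le \WW(({\mathcal T}^\mu_t)_\sharp\mu_0,({\mathcal T}^\mu_t)_\sharp\nu_0)+\WW(({\mathcal T}^\mu_t)_\sharp\nu_0,({\mathcal T}^\nu_t)_\sharp\nu_0),
\]
and then invokes Lemma~\ref{primstim} on each piece, which in turn forces two separate applications of \eqref{contflow} (once with $\mu=\nu$ to bound the Lipschitz constant, once with $P_1=P_2$ to bound the sup-norm difference of the flows). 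You instead push an optimal plan $\pi_0$ forward by ${\mathcal T}^\mu_t\times{\mathcal T}^\nu_t$ and apply \eqref{contflow} in its full two-variable form in one stroke; this bypasses Lemma~\ref{primstim} entirely and is slightly cleaner. Either way one lands on the same integral inequality before the Gronwall step, so the two arguments are minor rearrangements of one another.
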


\begin{proof}
Let  ${\mathcal T}^\mu_t$ and ${\mathcal T}^\nu_t$ be the flow maps associated to the system \eqref{system} and to the system \eqref{system+}, respectively.
By \eqref{pushf}, the triangle inequality, and Lemma \ref{primstim} we have for every $t$
\begin{equation}\label{start}
\begin{array}{c}
\WW(\mu(t), \nu(t))=\WW(({\mathcal T}^\mu_t)_\sharp \mu_0, ({\mathcal T}^\nu_t)_\sharp \nu_0)  \\[5pt]
\le \WW(({\mathcal T}^\mu_t)_\sharp \mu_0, ({\mathcal T}^\mu_t)_\sharp \nu_0) + \WW(({\mathcal T}^\mu_t)_\sharp \nu_0, ({\mathcal T}^\nu_t)_\sharp \nu_0)\le L_r \WW(\mu_0, \nu_0)+\|{\mathcal T}^\mu_t-{\mathcal T}^\nu_t\|_{L^\infty(B(0,r))}
\end{array}
\end{equation}
where $L_r$ is the Lipschitz constant of ${\mathcal T}^\mu_t$ on the ball $B(0,r)$.

Using \eqref{contflow} with $\mu=\nu$ we get that there exists a function $l\in L^1(0,T)$ only depending on $r$, $C$, $R$,  $\ell$, and $T$ such that
\begin{equation}\label{stima1}
L_r \le e^{\int_0^t l(s)\,ds}\,.
\end{equation}
Again by \eqref{contflow} with $P_1= P_2$ there exist a constant $\rho$ and an $L^1$ function, still denoted by $l$, both depending only on $r$, $C$, $R$,  $\ell$, and $T$ such that
\begin{equation}\label{stima2}
\|{\mathcal T}^\mu_t-{\mathcal T}^\nu_t\|_{L^\infty(B(0,r))}\le \int_0^t e^{\int_s^t l(\sigma)\,d\sigma}\|H\star \mu(s)-H\star \nu(s)\|_{L^\infty(B(0,\rho))}\,ds\,.
\end{equation}

Combining \eqref{start}, \eqref{stima1}, and \eqref{stima2} with Lemma \ref{secstim}, we get the existence of an $L^1$ function, still denoted by $l(t)$, and of a constant $L$, both depending only on $r$, $C$, $R$,  $\ell$, and $T$ such that
$$
\WW(\mu(t), \nu(t))\le e^{\int_0^t l(s)\,ds} \,\WW(\mu_0, \nu_0)+ L \int_0^t e^{\int_s^t l(\sigma)\,d\sigma}\, \WW(\mu(s), \nu(s)) \,ds
$$
for every $t \in [0, T]$, or equivalently
$$
e^{-{\int_0^t l(s)\,ds}}\,\WW(\mu(t), \nu(t))\le  \WW(\mu_0, \nu_0)+ L \int_0^t e^{-{\int_0^s l(\sigma)\,d\sigma}}\, \WW(\mu(s), \nu(s)) \,ds\,.
$$
The Gronwall inequality gives now
$$
e^{-{\int_0^t l(s)\,ds}}\,\WW(\mu(t), \nu(t))\le  \WW(\mu_0, \nu_0)e^{Lt}
$$
which is exactly \eqref{stab} with $\delta(t)= L+l(t)$.
\end{proof}

\begin{remark}
The existence result of Theorem \ref{thm:6} gives an explicit estimate of an $R$ satisfying \eqref{supptot}, once the constants $r$, $C$, and $T$, and the function $\ell$ appearing in the statement of Theorem \ref{uniq} are given. As a byproduct of uniqueness, the function $\delta$ in \eqref{stab} is therefore only depending on $r$, $C$, $\ell$, and $T$.
\end{remark}

\section*{Acknowledgement}

Massimo Fornasier  acknowledges the support of the ERC-Starting Grant HDSPCONTR ``High-Dimensional Sparse Optimal Control''. 
Francesco Solombrino acknowledges the hospitality of the Johann Radon Institute for Computational and Applied Mathematics (RICAM) of the Austrian Academy of Sciences 
during the early preparation of this work.
\bibliographystyle{abbrv}
\bibliography{biblioflock}

\end{document}